\newenvironment{pf}{{\medskip \bf\em Proof.}}{{\hfill$\square$}\medskip}
\newcommand{\be}{\beta}
\def\({\left(}
\def\){\right)}
\def\be{\begin{equation}}
\def\ee{\end{equation}}
\def\<{\langle}
\def\>{\rangle}
\newcommand{\dbar}{\overline{\partial}}
\newcommand{\ddt}[1]{\frac{\partial #1}{\partial t}}
\newcommand{\Vol}{\operatorname{Vol}}
\newcommand{\ddbar}{\frac{\sqrt{-1}}{2} \partial\dbar}
\newcommand{\cal}{\mathcal}
\newtheorem{theorem}{Theorem}[section]
\newtheorem{theorem/definition}{Theorem/Definition}[section]
\newtheorem{proposition}{Proposition}[section]
\newtheorem{lemma}{Lemma}[section]
\newtheorem{corollary}{Corollary}[section]
\theoremstyle{remark}
\newtheorem{remark}{Remark}[section]
\theoremstyle{definition}
\newtheorem{definition}{Definition}[section]
\begin{document}
\title
{The K\"ahler-Ricci flow on Fano manifolds}
\author{Huai-Dong Cao}
\address{Department of Mathematics, University of Macau, Macao, China \& Department of Mathematics\\ Lehigh University\\
Bethlehem, PA 18015} \email{huc2@lehigh.edu}

\maketitle
\date{}

\footnotetext[1]{Partially supported by NSF grant DMS-0909581}

\setcounter{tocdepth}{1}
\tableofcontents

\noindent {\bf Introduction}

\medskip
In these lecture notes, we aim at giving an introduction to the K\"ahler-Ricci flow (KRF) on Fano manifolds, i.e.,
compact K\"ahler manifolds with positive first Chern class. It will cover some of the developments of the KRF in its first
twenty years (1984-2003),  especially an essentially self-contained exposition of Perelman's uniform estimates 
on the scalar curvature,  the diameter, and the Ricci potential function (in $C^1$-norm)  for the normalized K\"ahler-Ricci flow (NKRF), 
including the monotonicity of Perelman's $\mu$-entropy and $\kappa$-noncollapsing theorems for the Ricci flow on compact
manifolds.  Except in the last section where we shall briefly discuss the
formation of singularities of the KRF in Fano case, much of the
recent progress since Perelman's uniform estimates are not
touched here, especially those by Phong-Sturm \cite{PS06}
and Phong-Song-Sturm-Weinkove \cite {PSSW09, PSSW08, PSSW11} (see 
also \cite{Pali, CaoMeng09, Sz10, To10, MS, Zhang11} etc.) tying the convergence of the NKRF
to a notion of GIT stability for the diffeomorphism group, in the spirit of the conjecture of Yau \cite{Yau93} (see also
\cite{Ti97, Do02}). We hope to discuss these developments, as well as many works related to
K\"ahler-Ricci solitons, on another occasion. 
We also refer the readers to the recent lecture notes by J. Song and B.
Weinkove \cite{SW} for some of the other significant developments in KRF.

In spring 1982, Yau invited Richard Hamilton to give a talk at the Institute for Advanced Study (IAS)
on his newly completed seminal work
``Three-manifolds with positive Ricci curvature" \cite{Ha82}. Shortly after, Yau asked me, Ben Chow and Ngaiming Mok to present
Hamilton's work on the Ricci flow in details at Yau's IAS geometry seminar. At the time, Ben Chow and I were first year graduate students, and Mok was an instructor
at Princeton University. There was another fellow first year graduate student, S. Bando, working with Yau.  It was clear to us that Yau
was very excited about Hamilton's work and saw its great potential. He encouraged us to study and pursue Hamilton's Ricci flow. 

Besides attending courses at Princeton and Yau's lecture series in geometric analysis at IAS, I spent most of 1982 preparing for Princeton's  General Examination,
a 3-hour oral exam covering two basic subjects (Real \& Complex Analysis and Algebra) plus two additional advanced topics. But I also continued to
study Hamilton's paper.
After I passed the General Exam in January 1983, I went to see Yau and asked for his suggestion for a thesis topic.
Yau immediately gave me the problem to study the Ricci flow on K\"ahler manifolds, especially the long time existence and convergence on Fano manifolds.   
At the time I hardly knew any complex geometry (but I did not dare to tell Yau so). In the following months, I spent a lot of time reading and trying to understand Yau's seminal paper on the Calabi conjecture \cite{Yau78}, and also Calabi's
paper on extremal K\"ahler metrics \cite{Calabi} suggested by Yau.  In the mean time, it happened that Yau invited Calabi to visit IAS in spring 1983 and I benefited a great deal 
from Calabi's lecture series on ``Vanishing theorems in K\"ahler geometry" at IAS that spring.

By spring 1984 I had managed to prove the long time existence of the canonical K\"ahler-Ricci flow by adopting Yau's celebrated a priori estimates for the Calabi conjecture 
to the parabolic case,  as well as the convergence to K\"ahler-Einstein metrics when the first Chern class $c_1$ is either negative or zero. The convergence proof when $c_1=0$ used a 
version of the Li-Yau type estimate for positive solutions to the heat equation with evolving metrics and an argument of J. Moser. But little progress was made 
toward long time behavior when $c_1>0$. Without fully aware of the significance and the difficulties of the problem
at the time, 
I felt kind of uneasy that I did not meet my adviser's high expectation. But to my relief, Yau seemed
quite pleased and encouraged me to write up the work. That resulted my 1985 paper \cite{Cao85}.  In Fall of 1984, several of Yau's Princeton graduate students, including me and B. Chow,
followed him to San Diego where both Richard Hamilton and Rick Schoen also arrived.  By then Bando had used the short time property
of the flow to classify three-dimensional compact K\"ahler manifolds of nonnegative bisectional curvature (see \cite{Bando})  and graduated from Princeton.
Shortly after our arrival in San Diego, following Hamilton's work in \cite{Ha86}, Ben Chow and I  also used the short time property of the flow to classify
compact K\"ahler manifolds with nonnegative curvature operator in all dimensions  \cite {CaoChow}. In 1988,  Mok's work \cite{Mo88} was published in
which he was able to show (in 1986) nonnegative bisectional curvature is preserved in all dimensions. By combining the short time property of the flow and the
existence of special rational curves by Mori \cite{Mori},  Mok proved the generalized
Frankel conjecture in its full generality  (see also a recent new proof by H. Gu \cite{Gu09}).  Around the same time, Tsuji \cite{Tsuji} extended my work on the KRF for the negative Chern class case 
to compact complex manifolds of general type (see also the related later work of Tian-Zhang \cite{TZhang}). This is a brief history of the KRF in its early years.

Late 1980s and 1990s saw great advances in the Ricci flow by Hamilton \cite{Ha88, Ha93, Ha93E, Ha95, Ha95F, Ha97, Ha99}
which laid the foundation to use the Ricic flow to attack the Poincar\'e and geometrization conjectures. In particular, the works of Hamilton \cite{Ha88} and Ben Chow \cite{Chow91}
imply that every metric on a compact Riemann surface can be deformed to a metric of constant curvature under the Ricci flow. 
During the same period, there were several developments in the KRF,  including  
the constructions of $U(n)$-invariant K\"ahler-Ricci soliton examples by Koiso \cite{Ko} and
the author \cite{Cao94}\footnote{My work was carried out at Columbia University in early 1990s.}; the Li-Yau-Hmailton inequalities and the Harnack inequality
for the KRF  \cite{Cao92, Cao97};  the important work of W.-X. Shi \cite{Sh90B, Sh97}, another former student of Yau, using the noncompact KRF
to approach Yau's conjecture that a complete noncompact K\"ahler manifold with positive bisectional curvature is biholomorphic to the complex Euclidean
space $\mathbb C^n$ (see \cite{ChTam08} for a recent survey on the subject), etc.   In addition,  in 1991 at Columbia University,
I first observed that Mabuchi's K-energy \cite{Ma86} and the functional
defined in Ding-Tian \cite{DT92} are monotone decreasing under the KRF \cite{Cao91}. The fact that the K-energy
is monotone under the KRF turned out to be quite useful, and was first applied in the work of Chen-Tian \cite{CTi02} ten years later. 

In November 2002 and spring 2003, Perelman \cite{P1, P2, P3} made astounding breakthroughs in the Ricci flow. In April 2003, in a private lecture at MIT,
Perelman presented in detail his uniform scalar curvature and diameter estimates for the NKRF based on the monotonicity of his $\mathcal W$-functional
and $\mu$-entropy, and the powerful ideas in his $\kappa$-noncollapsing results.  We remark that prior to Perelman's lecture at MIT, such uniform estimates had
appeared only in the important special case when NKRF has positive bisectional curvature, in the work of Chen and Tian \cite{CTi02} for the K\"ahler surface case  (see also \cite{CTi06}
for the higher dimensional case) assuming in addition the existence of K-E metrics; 
and also in the work of B.-L Chen, X.-P. Zhu and the author  \cite{CCZ03} in all dimensions and without assuming the existence
of K-E metrics.

From Hamilton and Perelman's works to the recent proof of the
1/4-pinching differentiable sphere theorem by Brendle-Schoen
\cite{BS09}, we have seen spectacular applications of the Ricci
flow and its sheer power of flowing to canonical metrics/structures without a priori knowing their existence. Let
us hope to see similar phenomena happen to the KRF.

\bigskip
\noindent {\bf Acknowledgements}. \ This article was written in Spring 2012. It is based on a mini-course on KRF delivered at University of Toulouse III  in February 2010,  a talk on Perelman's uniform estimates for NKRF at Columbia University's Geometry and Analysis Seminar  in Fall 2005, and several conference talks, including ``Einstein Manifolds and Beyond" at CIRM
(Marseille - Luminy, fall 2007), ``Program on Extremal K\"ahler Metrics and K\"ahler-Ricci Flow" at the De Giorgi Center
(Pisa, spring 2008), and ``Analytic Aspects of Algebraic and Complex Geometry" at CIRM (Marseille - Luminy, spring 2011). This article served as the Lecture Notes by the author for a 
graduate course at Lehigh University in spring 2012, as well as a short course at the Mathematical Sciences Center of Tsinghua University in May, 2012. 
I would like to thank Philippe Eyssidieux, Vincent Guedj, and Ahmed Zeriahi for inviting me to give the mini-course in Toulouse, and especially Vincent Guedj for
inviting me to write up the notes for a special volume. I also wish to thank the participants in my courses, especially Qiang Chen, Xin Cui, Chenxu He, Xiaofeng Sun, Yingying Zhang and Meng Zhu, for 
their helpful suggestions. Finally, I would like to take this opportunity to express my deep gratitude to Professors E. Calabi, R. Hamilton, and
S.-T. Yau for teaching me the K\"ahler geometry, the Ricci flow, and geometric analysis over the years.

\section{Preliminaries}

In this section, we fix our notations and recall some basic facts and formulas in K\"ahler Geometry.

\medskip
\noindent {\bf 1.1 K\"ahler metrics and K\"ahler forms}
\medskip

 Let $(X^n, g)$ be a compact K\"ahler manifold of complex dimension $n$ with the K\"ahler metric $g$.  In local holomorphic coordinates $(z^1,  \cdots, z^n)$, denote its K\"ahler form by
$$ \omega= \frac{\sqrt{-1}}{2}\sum_{i,j} g_{i\bar j} dz^i\wedge d\bar{z}^j. \eqno(1.1)$$ By definition, $g$ is K\"ahler means that its K\"ahler
 form $ \omega$ is a {\it closed} real (1,1) form, or equivalently,
 $$\partial_k g_{i\bar j}= \partial_i g_{k\bar j} \quad{\mbox{and}} \quad \partial_{\bar k} g_{i\bar j}= \partial_{\bar j}g_{i\bar k} \eqno(1.2)$$
 for all $i,j,k=1, \cdots n$.  Here $\partial_k=\partial/\partial z^k$ and $\partial_{\bar k}=\partial/\partial {\bar z}^{k}$.

 The cohomology class [$\omega$] represented by $\omega$ in $H^2(X, \mathbb R)$ is called the K\"ahler class of the metric $g_{i\bar j}$. By the Hodge theory, two K\"ahler metrics $g_{i\bar j}$ and $\tilde g_{i\bar j}$ belong to the same K\"ahler class  if and only if  $g_{i\bar j}=\tilde g_{i\bar j}+\partial_i\partial_{\bar j}\varphi$, or equivalently,
 $$ \omega=\tilde\omega + \ddbar \varphi \eqno(1.3)$$ for some real valued smooth function $\varphi$ on $X$.

 The volume of $(X, g)$ is given by
 $$\Vol (X, g)=\int_X \omega^{[n]}, \eqno(1.4)$$
 where we have followed the convention of Calabi \cite{Calabi} to denote  $\omega^{[n]} = \omega^{n}/n!$ so that the volume form is given by
$$ dV=\det(g_{i\bar j}) \wedge_{i=1}^{n}(\frac{\sqrt{-1}}{2} dz^{i}\wedge d\bar{z}^{i}) =\omega^{[n]}.  \eqno(1.5)$$
Clearly, by Stokes' theorem, if $g$ and $\tilde g$ are in the same K\"ahler class then we have
 $$\Vol (X, g)=\Vol (X, \tilde g).$$

\noindent {\bf 1.2 Curvatures and the first Chern class}

\medskip

The Christoffel symbols of the metric  $g_{i\bar j}$ are given by
$$\Gamma_{ij}^k=g^{k\bar \ell}\partial_i g_{j\bar \ell} \quad \mbox{and} \quad \Gamma_{\bar i \bar j}^{\bar k}=g^{\ell \bar k }\partial_{\bar i} g_{\ell \bar j}, \eqno(1.6)$$
where $(g^{i\bar j})= ((g_{i\bar j})^{-1})^{T}$.  It is a basic fact in K\"ahler geometry that, for each point $x_0\in X^n$, there exists a system of holomorphic normal coordinates $(z^1, \cdots, z^n)$
at $x_0$ such that
 $$ g_{i\bar j}(x_0)=\delta_{i\bar j} \qquad \mbox{and} \qquad \partial_k g_{i\bar j}(x_0)=0, \qquad \forall i,j, k=1, \cdots n. \eqno(1.7) $$

The curvature tensor of the metric $g_{i\bar j}$ is defined as
$R^{\ j}_{i \  k\bar\ell}=- \partial_{\bar \ell} \Gamma_{ik}^j$,
or by lowering $j$ to the second index,
$$R_{i\bar j k\bar \ell}=g_{p\bar j} R^{\ p}_{i \ k\bar\ell}=-\partial_k\partial_{\bar \ell} g_{i\bar j} +g^{p\bar q}\partial_k g_{i\bar q}\partial_{\bar\ell}g_{p\bar j}. \eqno(1.8)$$
From (1.2) and (1.8), we immediately see that $R_{i\bar j k\bar
\ell}$ is symmetric in $i$ and $k$, in $\bar j$ and $\bar \ell$,
and in the pairs $\{i\bar j\}$ and $\{k\bar\ell\}$.

We say that $(X^n, g)$ has positive (holomorphic) bisectional curvature, or  positive holomorphic sectional curvature, if
$$ R_{i\bar j k\bar \ell} v^i v^{\bar j} w^k w^{\bar\ell}>0, \quad \mbox{or } \quad R_{i\bar j k\bar \ell} v^i v^{\bar j} v^k v^{\bar\ell}>0$$
respectively, for all nonzero vectors $v$ and $w$ in the holomorphic tangent bundle $T_x X$ of $X$ at $x$ for all $x\in X$.

The Ricci tensor of the metric $g_{i\bar j}$ is obtained by taking the trace of $R_{i\bar j k\bar \ell}$:
$$R_{i\bar j}= g^{k\bar\ell} R_{i\bar j k\bar \ell}=-\partial_i\partial_{\bar j}\log\det (g). \eqno(1.9)$$
From (1.9), it is clear that the Ricci form
$${Ric} =\frac{\sqrt{-1}}{2}\sum_{i,j} R_{i\bar j} dz^i\wedge d\bar{z}^j \eqno(1.10)$$
is real and closed. It is well known that the first Chern class $c_1 (X)\in  H^2(X, \mathbb{Z})$ of $X$ is represented by the Ricci form:
$${[Ric]}=\pi c_1 (X). \eqno(1.11)$$

Finally, the scalar curvature of the metric $g_{i\bar j}$ is
$$R=g^{i\bar j} R_{i\bar j}. \eqno(1.12)$$ Hence, the total scalar curvature
$$\int_X R dV=  \int _X Ric\wedge \omega^{[n-1]}, \eqno(1.13)$$ depends only on the K\"ahler class of $\omega$ and the first Chern class  $c_1(X)$. \\

\noindent {\bf 1.3 Covariant derivatives}

\medskip

Given any smooth function $f$, we denote by
$$\nabla_i f=\partial_i f, \qquad \nabla_{\bar i} f=\partial_{\bar i} f.$$
For any (1,0)-form $v_i$, its covariant derivatives are defined as
$$\nabla_j  v_i=\partial_j v_i-\Gamma^{k}_{ij} v_k \quad \mbox{and} \quad \nabla_{\bar j}  v_i=\partial_{\bar j} v_i.  \eqno(1.14)$$
Similarly, for covariant 2-tensors, we have
$$ \nabla_k v_{i\bar j}=\partial_k v_{i\bar j}-\Gamma^{p}_{ik}v_{p\bar j},  \quad  \nabla_{\bar k} v_{i\bar j}=\partial_{\bar k} v_{i\bar j}-\Gamma^{\bar p}_{\bar j\bar k}v_{i\bar p},$$
$$ \nabla_k v_{ij}=\partial_k v_{ij}-\Gamma^{p}_{ik}v_{pj} -\Gamma^{p}_{jk}v_{ip}, \quad \mbox{and} \quad \nabla_{\bar k} \ v_{ij}=\partial_{\bar k} v_{ij}.$$
Now, in the K\"ahler case, the second Bianchi identity in Riemannian geometry translates into the relations
$$ \nabla_p R_{i\bar j k\bar \ell} = \nabla_k R_{i\bar j p\bar \ell} \quad \mbox{and} \quad
\nabla_{\bar p} R_{i\bar j k\bar \ell} = \nabla_{\bar \ell} R_{i\bar j k\bar p}. \eqno(1.15)$$

Covariant differentiations of the same type can be commuted freely, e.g.,
$$  \nabla_k\nabla_{j}  v_i = \nabla_j\nabla_{k}  v_i,  \qquad  \nabla_{\bar k}\nabla_{\bar j}  v_i = \nabla_{\bar j}\nabla_{\bar k}  v_i, \eqno(1.16)$$ etc.
But we shall need the following formulas when commuting covariant derivatives of different types: 
$$\nabla_k\nabla_{\bar j}  v_i - \nabla_{\bar j}\nabla_{k}  v_i= - R_{k\bar j i\bar \ell}  v_{\ell}, \eqno(1.17)$$
$$ \nabla_k\nabla_{\bar \ell} v_{i\bar j} - \nabla_{\bar \ell}\nabla_{ k} v_{i\bar j}= - R_{k\bar \ell i\bar p }  v_{p \bar j} + R_{k\bar \ell p\bar j} v_{i \bar p}, \eqno(1.18)$$
etc.

We define
$$|\nabla f|^2=g^{i\bar j} \partial_i f \partial_{\bar j} f, \eqno(1.19)$$
$$|Rc|^2= g^{i\bar\ell} g^{k\bar j} R_{i\bar j} R_{k\bar \ell}, \eqno(1.20)$$
and
$$|Rm|^2= g^{i\bar q} g^{p\bar j} g^{k\bar s} g^{r\bar \ell} R_{i\bar j k\bar \ell} R_{p\bar q r \bar s}. \eqno(1.21)$$
The norm square  $|S|^2$ of any other type of covariant tensor $S$ is defined similarly.

Finally, the Laplace operator on a tensor $S$ is, in normal coordinates,  defined as
$$\Delta S=\frac 1 2 \sum_k (\nabla_k\nabla_{\bar k} + \nabla_{\bar k}\nabla_{k}) S. \eqno(1.22)$$

\medskip
\noindent {\bf 1.4  K\"ahler-Einstein metrics and K\"ahler-Ricci solitons}

\smallskip

 It is well known that a  K\"ahler metric $g_{i\bar j}$ is K\"ahler-Einsten if
$$R_{i\bar j}=\lambda g_{i\bar j} $$ for some real number $\lambda \in \mathbb R$.  K\"ahler-Ricci solitons are extensions of K-E metrics: a  K\"ahler metric
$g_{i\bar j}$ is called a gradient K\"ahler-Ricci (K-R) soliton if  there exists a real-valued smooth function $f$ on $X$ such that
$$R_{i\bar j}=\lambda g_{i\bar j} -\partial_i\partial_{\bar j} f    \quad\mbox{and}\quad    \nabla_i\nabla_{j} f=0. \eqno(1.23)$$
It is called {\sl shrinking} if $\lambda>0$, {\sl steady} if $\lambda=0$, and {\sl expanding} if $\lambda<0$.  The function $f$ is called a potential function.

Note that the second equation in (1.23) is equivalent to saying the gradient vector field
$$\nabla f=(g^{i\bar j}\partial_{\bar j}f) \frac{\partial} {\partial z^{i}}$$ is holomorphic.  By scaling,
we can normalize $\lambda=1, 0, -1$ in (1.23).  The concept of Ricci soliton was introduced by Hamilton \cite{Ha88} in mid 1980s. It has since played a
significant role in Hamilton's Ricci  flow as Ricci solitons often arise as singularity models (see, e.g., \cite{Cao10} for a survey).
Note that when $f$ is a constant function, K-R solitons are simply
K-E metrics.

Clearly, if $X^n$ admits a K-E metric  or K-R soliton $g$ then the first Chern class is necessarily definite, as
$$\pi c_1(X)= \lambda  [\omega_g]. $$ When $c_1(X)=0$ it follows from Yau's solution to the Calabi conjecture
that in each K\"ahler class there exists a unique Calabi-Yau metric  (i.e., Ricci-flat K\"ahler metric) $g$ in that class. Moreover, when $c_1(X)<0$,  Aubin \cite{Aubin}
and Yau \cite{Yau78} proved  independently that there exists a unique K\"ahler-Einstein metric in the class $-\pi c_1(X)$.

However,  in the Fano case (i.e., $c_1(X)>0$), it is well known that there exist obstructions to the existence of a K-E metric $g$ in the class of $\omega\in \pi c_1(X)$
with $R_{i\bar j}=g_{i\bar j}$.  One of the obstructions is the Futaki invariant defined as follows:
take any K\"ahler metric $g$ with $\omega \in \pi c_1(X)$. Then its K\"ahler class [$\omega$] agrees with
its Ricci class [$Ric$]. Hence, by the Hodge theory, there exists a real-valued smooth function $f$, called the Ricci potential of the metric $g$, such that
$$ R_{i\bar j}=g_{i\bar j}-\partial_i\partial_{\bar j} f.  \eqno(1.24)$$
In \cite{Fu}, Futaki proved that the functional $F\!:  \eta (X) \rightarrow C$ defined by
$$ F(V)=\int_X \nabla_{\!V}\!f \  \!\omega^{[n]}=\int_X (V\!\cdot\nabla \!f )\  \!\omega^{[n]}  \eqno(1.25)$$ on the space $\eta (X)$ of holomorphic vector fields depends only on the class $\pi c_1(X)$, but not the metric $g$.
In particular, if a Fano manifold $X^n$ admits a positive K-E metric, then the Futaki invariant $F$ defined above must be zero.

On the other hand, it turns out that compact stead and expanding K-R soliotns are necessarily K-E. If $g$ is a {\it shrinking} K-R soliton satisfying
$$R_{i\bar j}=g_{i\bar j} - \partial_i\partial_{\bar j} f    \quad\mbox{and}\quad    \nabla_i\nabla_{j} f=0 \eqno(1.26)$$ with non-constant function $f$ then, taking
$V=\nabla f$,  we have
$$ F(\nabla f)=\int_X |\nabla f|^2  \omega^{[n]} \ne 0. \eqno(1.27)$$
The existence of compact shrinking K-R solitons were first shown independently by Koiso \cite{Ko} and the author \cite{Cao94},
and later by X. Wang and X. Zhu \cite{WZ}. Dancer-Wang \cite{DW11} extended my construction to the general case when the base manifold is a product of Fano K-E manifolds.
The noncompact example shrinking K-R soliton was first found by Feldman-Ilmanen-Knopf \cite{FIK}, see also Dancer-Wang \cite{DW11} and Futaki-Wang
\cite{FW11} for further examples. 

We remark that Bando and Mabuchi \cite{BM87} proved that positive K-E metrics are unique in the sense that any two positive K-E metrics on $X^n$
only differ by an automorphism of $X^n$. Moreover, Tian and Zhu \cite{TZ02} extended the definition of the Futaki invariant by introducing a corresponding
obstruction to the existence of shrinking K-R solitons on Fano manifolds. They also proved the Bando-Mabuchi type uniqueness
result for shrinking K-R solitons \cite{TZ00}.

\bigskip

\section {The K\"ahler-Ricci flow and the normalized K\"ahler-Ricci flow}

In this section we introduce  the K\"ahler-Ricci flow (KRF) and the normalized K\"ahler-Ricci flow (NKRF) on Fano manifolds, i.e.,
compact K\"ahler manifolds with positive first Chern class.  We state the basic long time existence of  solutions to the NKRF
proved by the author in \cite{Cao85}, derive the evolution equations of various curvature tensors, and present Mok's result on
preserving the non-negativity of the holomorphic bisectional curvature under the KRF.

\medskip
\noindent {\bf 2.1 The K\"ahler-Ricci flow and the normalized K\"ahler-Ricci flow}

\medskip
 On any given K\"ahler manifold $(X^n, \tilde g_{i\bar j})$, 
the K\"ahler-Ricci flow deforms the initial metric  $\tilde g$ by the equation
$$\frac{\partial}{\partial t} g_{i\bar j}(t) = - R_{i\bar j}(t), \quad g(0) = \tilde g, \eqno(2.1)$$
or equivalently, in terms of the K\"ahler forms, by
$$\frac{\partial}{\partial t} \omega (t) = - \textrm{Ric}(\omega (t)), \quad \omega(0) = \omega_0. \eqno(2.1')$$

Note that, by (2.1'), the K\"ahler class $[\omega(t)]$ of the evolving metric $g_{i\bar j}(t)$ satisfies the ODE
$$\frac {d} {dt} [\omega(t)]=-\pi c_1(X),$$ from which it follows that 
$$ [\omega(t)]= [\omega_0] - t\pi c_1(X). \eqno(2.2)$$ 

\begin{proposition} Given any initial K\"ahler metric $\tilde g$ on a compact K\"ahler manifold $X^n$, KRF (2.1) 
admits a unique solution $g(t)$ for a short time. 
\end{proposition}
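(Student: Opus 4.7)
The plan is to reduce the tensor-valued flow (2.1$'$) to a scalar parabolic complex Monge-Amp\`ere equation on $X$, exploiting the K\"ahler structure to avoid the DeTurck diffeomorphism trick needed for general Ricci flow. Since (2.2) fully determines the evolving K\"ahler class $[\omega(t)]$, I would first fix a smooth family of reference $(1,1)$-forms
$$\hat\omega(t) := \tilde\omega - t\,\textrm{Ric}(\tilde\omega),$$
which represents this class and remains positive-definite (hence K\"ahler) for $t$ in some interval $[0, T_0)$ by continuity, where $\tilde\omega$ is the K\"ahler form of $\tilde g$. By the $\partial\dbar$-lemma, any solution of (2.1$'$) with $\omega(0) = \tilde\omega$ must then have the form $\omega(t) = \hat\omega(t) + \ddbar\varphi(t)$ for a real potential $\varphi$ with $\varphi(\cdot, 0) = 0$.

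To derive the scalar equation, I would use the standard identity $\textrm{Ric}(\omega) - \textrm{Ric}(\tilde\omega) = -\ddbar\log(\omega^n/\tilde\omega^n)$, which is immediate from (1.9), together with $\partial_t\hat\omega = -\textrm{Ric}(\tilde\omega)$. Substituting into (2.1$'$) and integrating modulo an additive function of $t$ alone (which can be absorbed into $\varphi$) reduces the problem to
$$\frac{\partial \varphi}{\partial t} \;=\; \log\frac{(\hat\omega(t) + \ddbar\varphi)^n}{\tilde\omega^n}, \qquad \varphi(\cdot, 0) = 0.$$
Linearizing the right-hand side about $\varphi$, its principal part is the Laplacian of the metric $\hat\omega(t) + \ddbar\varphi$ applied to the increment, so the equation is strictly parabolic whenever this form is positive. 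I would then invoke standard short-time existence theory for quasilinear strictly parabolic equations on compact manifolds --- for instance, by solving the linearization in parabolic H\"older spaces $C^{2+\alpha, 1+\alpha/2}$ and applying the inverse function theorem --- to produce a smooth solution $\varphi \in C^\infty(X \times [0, T))$ for some $T > 0$. Setting $\omega(t) := \hat\omega(t) + \ddbar\varphi(t)$ then yields the desired K\"ahler-Ricci flow.

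Uniqueness follows from the same scalar reduction: any two solutions $\omega_1, \omega_2$ of (2.1$'$) with common initial data lie in the same cohomology class by (2.2), hence differ by $\ddbar\psi(t)$ with $\psi(\cdot, 0) = 0$, and $\psi$ satisfies a strictly parabolic scalar equation with zero initial data, forcing $\psi \equiv 0$ by standard parabolic uniqueness. The main --- and essentially the only --- delicate point is verifying strict parabolicity of the Monge-Amp\`ere equation, which reduces to a short direct calculation showing that the linearization of $\log\det(\hat g_{i\bar j} + \partial_i\partial_{\bar j}\varphi)$ has principal part $\Delta_{\omega(t)}$; the short-time positivity of $\hat\omega(t) + \ddbar\varphi$ then supplies the required uniform ellipticity constant on $[0, T)$.
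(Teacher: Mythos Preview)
Your proposal is correct and follows essentially the same route as the paper: both reduce KRF to the identical scalar parabolic Monge--Amp\`ere equation $\partial_t\varphi = \log\det(\tilde g_{i\bar j} - t\tilde R_{i\bar j} + \partial_i\partial_{\bar j}\varphi)/\det(\tilde g_{i\bar j})$ with $\varphi(0)=0$, invoke standard short-time existence for strictly parabolic equations, and deduce uniqueness by showing any other solution has a potential satisfying the same equation up to a function of $t$ only. The only cosmetic difference is that the paper also records, as an alternative, the Riemannian route via Hamilton--DeTurck together with preservation of holonomy, which you explicitly chose to bypass.
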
 

\begin{proof}  We consider the nonlinear, strictly parabolic, scalar equation of Monge-Amper\'e type  
$$ \ddt{\varphi} = \log \frac{\det(\tilde g_{i\bar j}-t \tilde R_{i\bar j}+\partial_i\partial_{\bar j}\varphi)} {\det(\tilde g_{i\bar j})}, \quad \varphi (0)=0$$ as in  \cite{Bando}. 
Then, this parabolic equation admits a unique solution $\varphi$ for a short time, and it is easy to verify that 
$$ g_{i\bar j}(t)=: \tilde g_{i\bar j}-t \tilde R_{i\bar j}+\partial_i\partial_{\bar j}\varphi$$
gives rise to a short time solution to KRF (2.1) for small $t>0$. This proves the existence. For the uniqueness, suppose $h_{i\bar j}$ is another solution to KRF (2.1). Then, by (2.2), we  have
$$ h_{i\bar j}= \tilde g_{i\bar j}-t \tilde R_{i\bar j}+\partial_i\partial_{\bar j}\psi$$ for some real-valued function $\psi$. So it follows that 
 $$\partial_i\partial_{\bar j} (\ddt{\psi})=-R_{i\bar j} +\tilde R_{i\bar j}. $$ Hence,  by (1.9) and by adjusting with an additive function in $t$ only, we have
$$ \ddt{\psi} = \log \frac{\det(\tilde g_{i\bar j}-t \tilde R_{i\bar j}+\partial_i\partial_{\bar j}\psi)} {\det(\tilde g_{i\bar j})}.$$
Note also that $h_{i\bar j}(0)=\tilde g_{i\bar j}$ forces $\psi(0)$ to be a constant function.
Therefore $\varphi$ and $\psi$ differ by a function in $t$ only which in turn implies that $g=h$.

Alternatively, by the work of Hamilton \cite{Ha82} (see also De Turck \cite{deTurck}), there exists a unique solution $g(t)$ to (2.1), 
regarded as the Ricci flow for Riemannian metric, for a short time with $\tilde g$ as the initial metric. Moreover, Hamilton \cite{Ha95F} 
observed that the holonomy group does not change under the Ricci flow for a short time. Thus, the solution $g(t)$ remains
K\"ahler for $t>0$.
\end{proof}

\begin{lemma} Under the K\"ahler-Ricci flow (2.1), the volume of $(X, g_{i\bar j}(t))$ changes by
$$\frac{d}{dt} \Vol (X, g(t))=- \int_X R(t) \ \omega^{[n]}(t).$$
\end{lemma}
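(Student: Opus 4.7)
The plan is to differentiate $\Vol(X,g(t)) = \int_X \omega^{[n]}(t)$ under the integral sign (which is justified because $X$ is compact and the flow is smooth for short time by Proposition~2.1) and compute $\tfrac{d}{dt}\omega^{[n]}$ pointwise. Thus the whole statement reduces to establishing the pointwise identity
\[
\frac{d}{dt}\,\omega^{[n]}(t) = - R(t)\,\omega^{[n]}(t).
\]

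I would derive this via the volume form expression (1.5), $\omega^{[n]} = \det(g_{i\bar j})\,\bigwedge_i (\tfrac{\sqrt{-1}}{2}dz^i\wedge d\bar z^i)$. Using the standard identity $\tfrac{d}{dt}\log\det(g_{i\bar j}) = g^{i\bar j}\,\tfrac{\partial}{\partial t} g_{i\bar j}$ and substituting the KRF equation $\partial_t g_{i\bar j} = -R_{i\bar j}$ together with the definition (1.12) of the scalar curvature $R = g^{i\bar j} R_{i\bar j}$, we get
\[
\frac{\partial}{\partial t}\log\det(g_{i\bar j}) = -\,g^{i\bar j} R_{i\bar j} = -R,
\]
so $\tfrac{\partial}{\partial t}\det(g_{i\bar j}) = -R\,\det(g_{i\bar j})$. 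Since the differential-form factor $\bigwedge_i(\tfrac{\sqrt{-1}}{2}dz^i\wedge d\bar z^i)$ is time-independent, this yields the pointwise identity above.

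Integrating over $X$ then gives the claimed formula. (Alternatively, one can bypass determinants and use the wedge-product form: $\tfrac{d}{dt}\omega^{[n]} = \tfrac{d\omega}{dt}\wedge \omega^{[n-1]} = -\mathrm{Ric}\wedge \omega^{[n-1]}$, which by the pointwise trace identity $\mathrm{Ric}\wedge \omega^{[n-1]} = R\,\omega^{[n]}$ implicit in (1.13) produces the same conclusion.) There is essentially no obstacle here beyond the standard variation-of-determinant formula; the lemma is a direct consequence of the evolution equation (2.1) and the definition of the scalar curvature.
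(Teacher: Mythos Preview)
Your proof is correct and follows essentially the same approach as the paper: compute $\frac{\partial}{\partial t}\omega^{[n]}$ via $\frac{\partial}{\partial t}\log\det(g_{i\bar j}) = g^{i\bar j}\partial_t g_{i\bar j} = -R$, then integrate over $X$. The paper's argument is identical in substance, just slightly more terse.
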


\begin{proof} Under KRF (2.1), we have 
$$\frac{\partial}{\partial t} \omega^{[n]} = (\frac{\partial}{\partial t} \log \det(g_{i\bar j}))  \omega^{[n]} $$ and
$$\frac{\partial}{\partial t} \log \det(g_{i\bar j})=g^{i\bar j} \frac{\partial}{\partial t}g_{i\bar j}=-g^{i\bar j} R_{i\bar j}=-R.$$
Therefore, the volume element $dV=\omega^{[n]}$ changes by $$\frac{\partial}{\partial t} \omega^{[n]}=-R  \omega^{[n]}. \eqno(2.3)$$

\end{proof}

From now on, we consider a Fano manifold $(X^n, \tilde g_{i\bar j})$ such that 
$$[\omega_0]=[\tilde \omega]=\pi c_1(X), \eqno(2. 4) $$ 
and we deform the initial metric $\tilde g$ by the KRF (2.1). 

To keep the volume unchanged, we consider the normalized K\"ahler-Ricci flow

$$\frac{\partial}{\partial t} g_{i\bar j} = - R_{i\bar j} + g_{i\bar j}, \quad g(0) = \tilde g  \eqno(2.5)$$
or equivalently
$$\frac{\partial}{\partial t} \omega = - \textrm{Ric}(\omega) + \omega, \quad \omega(0) = \omega_0. \eqno(2.5')$$

From the proof of Lemma 2.1, it is easy to see that the following holds (in fact, under NKRF (2.5) the solution $g(t)$ has the same K\"ahler class): 

\begin{lemma} Under the normalized K\"ahler-Ricci flow (2.5), we have
$$\frac{\partial}{\partial t}(dV)=(n-R) dV.$$
\end{lemma}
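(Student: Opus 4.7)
The plan is to mimic the computation already carried out in Lemma 2.1, but now keeping track of the extra $g_{i\bar j}$ term on the right-hand side of the NKRF. Since $dV = \omega^{[n]} = \det(g_{i\bar j})\, \wedge_i \tfrac{\sqrt{-1}}{2} dz^i \wedge d\bar z^i$, the time derivative of $dV$ is
\[
\frac{\partial}{\partial t}(dV) = \Bigl(\frac{\partial}{\partial t}\log\det(g_{i\bar j})\Bigr)\, dV.
\]
Thus the entire statement reduces to computing $\frac{\partial}{\partial t}\log\det(g_{i\bar j})$ under the flow (2.5).

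Next I would use the standard matrix identity $\frac{\partial}{\partial t}\log\det(g_{i\bar j}) = g^{i\bar j}\,\frac{\partial}{\partial t} g_{i\bar j}$, substitute the NKRF equation $\partial_t g_{i\bar j} = -R_{i\bar j} + g_{i\bar j}$, and simplify:
\[
g^{i\bar j}\bigl(-R_{i\bar j} + g_{i\bar j}\bigr) = -R + n,
\]
where the identity $g^{i\bar j} g_{i\bar j} = n$ is just the trace of the identity endomorphism on the holomorphic tangent bundle of complex dimension $n$, and $g^{i\bar j} R_{i\bar j} = R$ is the definition (1.12) of the scalar curvature.

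Combining the two displays gives $\partial_t (dV) = (n-R)\, dV$, which is exactly the claim. There is no real obstacle here: the only point to be careful about is the extra trace term that produces the $n$, which distinguishes NKRF from KRF and is precisely what ensures (together with integrating over $X$) that the total volume is preserved along (2.5), consistent with the K\"ahler class $[\omega(t)]$ remaining equal to $\pi c_1(X)$ for all $t$.
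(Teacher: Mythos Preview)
Your proof is correct and follows exactly the approach indicated in the paper, which simply refers back to the computation in the proof of Lemma 2.1 (replacing the KRF equation by the NKRF equation so that the trace picks up the extra $g^{i\bar j}g_{i\bar j}=n$ term).
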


By (2.2) and (2.4), it follows that under the KRF  (2.1)
$$ [\omega(t)]= \pi (1- t) c_1(X),$$ showing that $[\omega(t)]$ shrinks homothetically and would become degenerate at $t=1$. This suggests that if
$[0, T)$ is the maximal existence time interval of solution $\hat g(t) $ to KRF (2.1), then $T$ cannot exceed $1$. We shall see that the NKRF (2.5)
has solution $g(t)$ exists for all time $0\le t< \infty$, which in turn implies that $T=1$ for KRF (2.1).

By direct calculations, one can easily verify the following relations between the solutions to KRF (2.1) and NKRF (2.5).

\begin{lemma} Let $\hat g_{i\bar j}(s), 0\le s<1,$ and $g_{i\bar j} (t), 0\le t<\infty, $ be solutions to the {\rm KRF} (2.1) and the {\rm NKRF} (2.5) respectively. Then,
$\hat g_{i\bar j}(s)$ and $g_{i\bar j}(t)$ are related by
$$ \hat g_{i\bar j}(s)=  (1-s) g_{i\bar j}(t(s)), \quad  t=-\log (1-s)$$   and
$$g_{i\bar j}(t)=  e^{t}  \hat g_{i\bar j}(s(t)), \quad s= 1- e^{-t}. $$
\end{lemma}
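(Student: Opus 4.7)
The plan is to verify one of the two identities directly and then observe that the other is its inverse. Since the two formulas $t=-\log(1-s)$ and $s=1-e^{-t}$ are mutually inverse, and the scaling factors $1-s$ and $e^{-t}$ are reciprocals along the graph $t=t(s)$, it suffices to show one direction. I will take the candidate $\hat g_{i\bar j}(s):=(1-s)\,g_{i\bar j}(t(s))$ with $t(s)=-\log(1-s)$ and check that it satisfies KRF~(2.1) with initial data $\tilde g$; uniqueness (Proposition~2.1) then forces it to coincide with the actual KRF solution.

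The key scaling observation is that for a K\"ahler metric the Ricci tensor $R_{i\bar j}=-\partial_i\partial_{\bar j}\log\det(g_{k\bar\ell})$ is invariant under constant rescaling $g\mapsto cg$, since $\log\det(cg)$ differs from $\log\det g$ by the constant $n\log c$. Hence the Ricci tensor of $\hat g(s)$ equals the Ricci tensor $R_{i\bar j}(t(s))$ of $g(t(s))$.

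Now I would compute, using the chain rule with $\frac{dt}{ds}=\frac{1}{1-s}$,
\[
\frac{\partial}{\partial s}\hat g_{i\bar j}(s)
=-g_{i\bar j}(t(s))+(1-s)\cdot\frac{1}{1-s}\,\frac{\partial g_{i\bar j}}{\partial t}(t(s))
=-g_{i\bar j}(t(s))+\bigl(-R_{i\bar j}+g_{i\bar j}\bigr)(t(s))
=-R_{i\bar j}(t(s)),
\]
where the NKRF equation (2.5) was used in the middle step. By the scaling observation above, $R_{i\bar j}(t(s))=\hat R_{i\bar j}(s)$, so $\hat g(s)$ solves KRF~(2.1). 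At $s=0$ we have $t(0)=0$, hence $\hat g(0)=g(0)=\tilde g$, matching the initial condition.

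The uniqueness half of Proposition~2.1 then identifies $\hat g(s)$ with the KRF solution, giving the first identity. Substituting $s=1-e^{-t}$ (so that $1-s=e^{-t}$) inverts this relation and yields the second identity. There is no real obstacle; the only point that requires care is the scale-invariance of Ricci in the K\"ahler setting, which is what makes the combination of time reparametrization and metric rescaling convert one flow into the other cleanly.
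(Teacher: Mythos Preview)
Your argument is correct and is precisely the ``direct calculation'' the paper alludes to: verify that the rescaled, time-reparametrized NKRF solution satisfies KRF with the same initial data, then invoke the uniqueness in Proposition~2.1. The paper does not spell out the proof, but your explicit use of the scale-invariance of $R_{i\bar j}$ and the chain-rule computation is exactly what is intended.
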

\medskip

\begin{corollary} Let $\hat g_{i\bar j}(s)$ and $g_{i\bar j} (t)$ be as in Lemma 2.3. Then, their scalar curvatures
and the norm square of their curvature tensors are related respectively by
$$ (1-s) \hat R(s)= R(t(s)),$$
and
$$ (1-s) |\hat{Rm}|_{\hat g(s)}= |Rm|_{g(t(s))}.$$
\end{corollary}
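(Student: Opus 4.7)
The plan is to derive both identities directly from Lemma 2.3, which exhibits $\hat g(s)$ as the spatially constant homothety $(1-s)\,g(t(s))$ of $g$ at the matched time $t(s)=-\log(1-s)$. Because the scale factor $c:=1-s$ depends only on $s$, both statements reduce to pointwise computations describing how the Riemann and scalar curvatures of a fixed K\"ahler metric transform under multiplication by a positive constant, and the time variable plays no further role.

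First I would record the elementary scaling law. For $\hat g_{i\bar j}=c\,g_{i\bar j}$, formula (1.9) immediately yields $\hat R_{i\bar j}=R_{i\bar j}$, since $\log\det(\hat g)=\log\det(g)+n\log c$ and the additive constant is killed by $\partial_i\partial_{\bar j}$. Alternatively, inspection of (1.8) shows $\hat R_{i\bar j k\bar\ell}=c\,R_{i\bar j k\bar\ell}$: the Hessian term gains a factor of $c$, while in the quadratic term the two $c$'s from the first derivatives cancel the $c^{-1}$ coming from $\hat g^{p\bar q}$. Contracting $\hat R_{i\bar j k\bar\ell}$ with $\hat g^{k\bar\ell}=c^{-1}g^{k\bar\ell}$ recovers $\hat R_{i\bar j}=R_{i\bar j}$, and one more trace gives $\hat R=c^{-1}R$, which is the first identity after substituting $c=1-s$.

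For the second identity, I would substitute $\hat R_{i\bar j k\bar\ell}=c\,R_{i\bar j k\bar\ell}$ and $\hat g^{i\bar j}=c^{-1}g^{i\bar j}$ (for each of the four inverse-metric factors) into definition (1.21): the four inverse metrics contribute $c^{-4}$ and the two Riemann tensors contribute $c^2$, giving $|\hat{Rm}|^2_{\hat g}=c^{-2}|Rm|^2_g$. Taking square roots and setting $c=1-s$ produces the stated formula $(1-s)\,|\hat{Rm}|_{\hat g(s)}=|Rm|_{g(t(s))}$. No serious obstacle arises beyond bookkeeping of scaling weights, since Lemma 2.3 has already done the analytic work of identifying the two flows up to a time-dependent homothety; the only subtlety worth flagging is that $c$ is constant in the spatial variables, so it commutes with every derivative appearing in (1.8)--(1.9), which is what permits the clean scaling behavior of $\hat R_{i\bar j k\bar\ell}$ in the first place.
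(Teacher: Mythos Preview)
Your proposal is correct and matches the paper's approach: the corollary is stated without proof immediately after Lemma 2.3, being understood as a direct consequence of the homothety $\hat g(s)=(1-s)\,g(t(s))$ via the standard scaling laws for curvature that you have carefully spelled out.
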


\bigskip

\noindent {\bf 2.2 The long time existence of the NKRF}

\medskip

First of all, it is well known that the NKRF (2.5) is equivalent to a parabolic scalar equation of complex Monge-Amp\`ere type on the K\"ahler potential.
For any given initial metric  $g_{0}=\tilde g$ satisfying (2.4), consider
$$g_{i\bar j} (t) =\tilde g_{i\bar j}+\partial_i\partial_{\bar j} \varphi, \eqno(2.6)$$
where $\varphi=\varphi(t)$ is a time-dependent, real-valued, smooth unknown function on $X$.
Then,
$$\frac{\partial}{\partial t} g_{i\bar j}= \partial_i \partial_{\bar j} \varphi_{t}$$
and
\begin{align*}
-R_{i\bar j}+g_{i\bar j} & = -R_{i\bar j}+ \tilde g_{i\bar j}+\partial_i\partial_{\bar j}  \varphi
= -R_{i\bar j}+ \tilde R_{i\bar j}+\partial_i\partial_{\bar j} ( \tilde f+\varphi)\\
& = \partial_i\partial_{\bar j} \log \frac{\omega^n} {{\tilde \omega}^n} + \partial_i\partial_{\bar j} ( \tilde f+\varphi).
\end{align*}
Here $\tilde f$ is the Ricci potential of $\tilde g_{i\bar j}$ as defined in (1.24).  Thus, the NKRF (2.5) reduces to
$$ \partial_i \partial_{\bar j} \varphi_{t} =
\partial_i\partial_{\bar j} \log \frac{\omega^n} {{\tilde \omega}^n} + \partial_i\partial_{\bar j} ( \tilde f+\varphi), $$
or equivalently,
$$ \ddt{\varphi} = \log \frac{\det(\tilde g_{i\bar j}+\partial_i\partial_{\bar j}\varphi)} {\det(\tilde g_{i\bar j})}+\tilde f +\varphi +b(t) \eqno(2.7)$$
for some function $b(t)$ of $t$ only.

Note that (2.7) is strictly parabolic, so standard PDE theory implies its short time existence  (cf. \cite{Baker}). Clearly, we have

\begin{lemma} If $\varphi $ solves the parabolic scalar equation  (2.7), then
$g_{i\bar j} (t)$, as defined in (2.6), is a solution to the {\rm NKRF (2.5)}.
\end{lemma}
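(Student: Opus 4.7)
The plan is essentially to reverse the derivation that produced equation (2.7). I start with the hypothesis that $\varphi=\varphi(t)$ satisfies (2.7), and I must produce the tensor identity $\partial_t g_{i\bar j} = -R_{i\bar j} + g_{i\bar j}$ for the metric defined by (2.6). The natural move is to apply the operator $\partial_i\partial_{\bar j}$ to both sides of the scalar equation (2.7) and match the result term-by-term against NKRF.

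On the left-hand side, differentiating (2.6) in $t$ gives $\partial_t g_{i\bar j} = \partial_i\partial_{\bar j}\varphi_t$ because $\tilde g_{i\bar j}$ is time-independent. For the right-hand side I would treat the four summands separately. Using the basic Ricci formula (1.9) applied to both $g$ and $\tilde g$, the log-det quotient contributes $-R_{i\bar j} + \tilde R_{i\bar j}$. The term $\partial_i\partial_{\bar j}\tilde f$ equals $\tilde g_{i\bar j} - \tilde R_{i\bar j}$ by the defining equation (1.24) for the Ricci potential of $\tilde g$. The term $\partial_i\partial_{\bar j}\varphi$ equals $g_{i\bar j} - \tilde g_{i\bar j}$ directly from (2.6). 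The function $b(t)$ kills off under $\partial_i\partial_{\bar j}$. Adding, the $\tilde R_{i\bar j}$ and $\tilde g_{i\bar j}$ pieces cancel in pairs, leaving exactly $-R_{i\bar j} + g_{i\bar j}$, which is NKRF (2.5).

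It remains only to observe that $g_{i\bar j}(t)$ defined by (2.6) is automatically K\"ahler — its K\"ahler form is $\tilde\omega + \tfrac{\sqrt{-1}}{2}\partial\bar\partial\varphi$, a closed real $(1,1)$-form — and positive definite as long as $\varphi$ stays admissible, i.e.\ $\tilde g_{i\bar j}+\partial_i\partial_{\bar j}\varphi >0$, which is built into the definition of a solution of the parabolic Monge--Amp\`ere equation (2.7) since the right-hand side involves $\log\det$. If one also wants the initial condition $g(0)=\tilde g$, it suffices that $\varphi(0)$ be constant; then $\partial_i\partial_{\bar j}\varphi(0)=0$ and (2.6) at $t=0$ returns $\tilde g$.

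There is no real obstacle here; the only thing to be careful about is bookkeeping of signs and the fact that the identity is established only up to a time-dependent additive function, which is precisely absorbed by $b(t)$. This is exactly why (2.7) is the correct scalar reduction of the tensor flow (2.5): the correspondence $\varphi \leftrightarrow g$ is well-defined modulo additive functions of $t$, and applying $\partial_i\partial_{\bar j}$ projects out that ambiguity.
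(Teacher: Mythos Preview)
Your proposal is correct and is essentially the same as the paper's approach: the paper derives (2.7) from (2.5) by exactly the computation you describe (applying $\partial_i\partial_{\bar j}$ and matching terms), then states the lemma with the phrase ``Clearly, we have'' and gives no separate proof. Your write-up simply reverses that derivation, which is precisely what is intended.
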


Now we can state the following long time existence result shown by the author \cite{Cao85}, based on the parabolic version of Yau's a priori estimates in \cite{Yau78}.
We refer the readers to \cite{Cao85}, or the lecture notes by Song and Weinkove \cite{SW} in this volume, for a proof.

\begin{theorem} [{Cao \cite{Cao85}}] The solution $\varphi(t)$ to {\rm(2.7)} exists for all time $0\le t <\infty$. Consequently, the solution $g_{i\bar j}(t)$ to the normalized K\"ahler-Ricci
flow {\rm (2.5)} exists for all time  $0\le t <\infty$.
\end{theorem}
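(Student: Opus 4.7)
The plan is to prove long-time existence via the standard continuation principle for the quasilinear parabolic equation (2.7). Short-time existence of $\varphi$ follows from strict parabolicity of (2.7), and by Lemma 2.5 it suffices to produce $\varphi$ for all time. Let $[0, T^*)$ be the maximal existence interval. The goal is to show that on any compact subinterval $[0,T]\subset [0,T^*)$ one has uniform $C^k$ bounds on $\varphi$ for each $k$, depending only on $T$ and the initial data $\tilde g$. A standard parabolic bootstrap then extends $\varphi$ past $T^*$, forcing $T^*=\infty$.

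The estimates proceed in the order that is the parabolic analog of Yau's a priori estimates for the elliptic complex Monge-Amp\`ere equation. First, one derives a $C^0$ estimate on $\varphi$: with an appropriate choice of the normalizing function $b(t)$ (which only shifts $\varphi$ by a function of $t$), the parabolic maximum principle applied to (2.7) together with a Gronwall-type argument absorbs the $+\varphi$ term on the right-hand side and yields a bound $\|\varphi\|_{C^0([0,T]\times X)}\le C(T)$. Second, differentiating (2.7) in $t$ gives a linear parabolic equation for $\dot\varphi$, to which the maximum principle delivers a $C^0$ bound on $\dot\varphi$, and hence on $\log(\omega^n/\tilde\omega^n)$ via the equation itself. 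Third, the crucial $C^2$ estimate arises from computing $(\partial_t-\Delta_{g(t)})\log\operatorname{tr}_{\tilde g} g(t)$, the parabolic version of the Aubin-Yau Laplacian estimate; the bisectional curvature of $\tilde g$ enters linearly and is absorbed via an auxiliary function of the form $\operatorname{tr}_{\tilde g}g - A\varphi$ together with the $C^0$ bound on $\varphi$, producing $\Delta_{\tilde g}\varphi \le C(T)$. Combined with the bound on $\omega^n/\tilde\omega^n$, this gives the uniform equivalence of metrics $C^{-1}\tilde g \le g(t) \le C\tilde g$ on $[0,T]$, i.e. uniform parabolicity. Fourth, with uniform parabolicity in hand, one applies either a parabolic Calabi-type $C^3$ estimate or Evans-Krylov theory to obtain a $C^{2,\alpha}$ bound in space-time, and then standard Schauder theory yields $C^\infty$ bounds by bootstrap.

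The main obstacle is the $C^0$ estimate for $\varphi$. In the Fano case, the $+\varphi$ term on the right-hand side of (2.7) --- reflecting the fact that the K\"ahler class is preserved under NKRF rather than strictly expanding or contracting --- blocks the direct maximum principle argument that works when $c_1(X)\le 0$. One must accept bounds that depend on $T$; unlike the cases $c_1(X)<0$ or $c_1(X)=0$, there is no hope for estimates uniform in $T$, since that would essentially amount to convergence to a K\"ahler-Einstein metric, which need not exist on a general Fano manifold due to the Futaki invariant and other obstructions recalled in Section 1.4. What saves the argument is that for long-time existence the estimates need only be finite on each finite interval $[0,T]$; the Gronwall-type growth of $\|\varphi\|_{C^0}$ in $T$ is perfectly compatible with this, and the continuation argument closes.
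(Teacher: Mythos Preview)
The paper does not actually give a proof of this theorem; it only states the result and refers the reader to \cite{Cao85} and the Song--Weinkove notes \cite{SW}, remarking that it is ``based on the parabolic version of Yau's a priori estimates in \cite{Yau78}.'' Your outline is precisely that parabolic Yau scheme ($C^0$ on $\varphi$ and $\dot\varphi$, parabolic Aubin--Yau $C^2$ estimate, Calabi $C^3$/Evans--Krylov, Schauder bootstrap, continuation), and your remark that in the Fano case the $+\varphi$ term forces $T$-dependent bounds --- which suffice for long-time existence but not convergence --- is exactly the point.
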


\medskip

\noindent {\bf 2.3 Preserving positivity of the bisectional curvature}

\medskip
To derive the curvature evolution equations for both KRF and NKRF, we consider
$$\frac{\partial}{\partial t} g_{i\bar j} = - R_{i\bar j} + \lambda g_{i\bar j}. \eqno(2.8)$$

\begin{lemma} Under (2.8), we have
$$ \frac{\partial} {\partial t} R_{i\bar j} =\Delta R_{i\bar j} +R_{i\bar j k\bar \ell} R_{\ell\bar k} -R_{i\bar k}R_{k\bar j}, \eqno (2.9 )$$ and
$$ \frac{\partial} {\partial t} R=\Delta R +|Rc|^2-\lambda R. \eqno (2. 10)$$
\end{lemma}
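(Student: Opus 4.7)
The strategy is to derive (2.9) by reducing $\partial_t R_{i\bar j}$ to a Hessian of the scalar curvature through the K\"ahler identity $R_{i\bar j}=-\partial_i\partial_{\bar j}\log\det(g_{k\bar\ell})$, and then converting that Hessian into $\Delta R_{i\bar j}$ plus the stated curvature corrections by invoking the second Bianchi identity (1.15) together with the mixed-type commutator (1.18). Equation (2.10) will then follow by contracting (2.9) with the inverse metric, taking proper account of the extra contribution from $\partial_t g^{i\bar j}$.

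For (2.9), first observe that under (2.8),
\[
\frac{\partial}{\partial t}\log\det(g_{k\bar\ell}) = g^{k\bar\ell}\,\partial_t g_{k\bar\ell} = -R + n\lambda,
\]
so $\partial_t R_{i\bar j} = -\partial_i\partial_{\bar j}(-R+n\lambda) = \partial_i\partial_{\bar j}R$, which in normal holomorphic coordinates at the point of interest equals $\nabla_i\nabla_{\bar j}R$. It therefore suffices to establish the pointwise identity
\[
\nabla_i\nabla_{\bar j}R = \Delta R_{i\bar j} + R_{i\bar j k\bar\ell}R_{\ell\bar k} - R_{i\bar k}R_{k\bar j}.
\]
I would first write $\nabla_{\bar j}R = g^{k\bar\ell}\nabla_{\bar j}R_{k\bar\ell}$ and apply the traced form of the anti-holomorphic Bianchi identity in (1.15), which yields $\nabla_{\bar j}R_{k\bar\ell} = \nabla_{\bar\ell}R_{k\bar j}$; applying $\nabla_i$ then gives $\nabla_i\nabla_{\bar j}R = g^{k\bar\ell}\nabla_i\nabla_{\bar\ell}R_{k\bar j}$. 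Commuting the mixed derivatives via (1.18) and using the holomorphic Bianchi identity in the form $\nabla_i R_{k\bar j} = \nabla_k R_{i\bar j}$ converts this into $g^{k\bar\ell}\nabla_{\bar\ell}\nabla_k R_{i\bar j}$ plus two curvature contractions; the K\"ahler symmetries from Section 1.2 reduce $g^{k\bar\ell}R_{i\bar\ell k\bar p}$ to $R_{i\bar p}$, matching the $-R_{i\bar k}R_{k\bar j}$ term, and reduce $g^{k\bar\ell}R_{i\bar\ell p\bar j}R_{k\bar p}$ to $R_{i\bar j k\bar\ell}R_{\ell\bar k}$. A direct check using (1.18) shows that $g^{k\bar\ell}\nabla_{\bar\ell}\nabla_k R_{i\bar j}$ and $g^{k\bar\ell}\nabla_k\nabla_{\bar\ell}R_{i\bar j}$ differ by commutator terms that cancel after the trace, so this principal piece is precisely $\Delta R_{i\bar j}$ in the sense of (1.22).

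For (2.10), I would differentiate $R = g^{i\bar j}R_{i\bar j}$. From (2.8), $\partial_t g^{i\bar j} = R^{i\bar j} - \lambda g^{i\bar j}$; combining this with (2.9) and the fact that on a scalar function the two orderings in (1.22) agree (so that $g^{i\bar j}\nabla_i\nabla_{\bar j}R = \Delta R$) produces
\[
\partial_t R = (R^{i\bar j}-\lambda g^{i\bar j})R_{i\bar j} + g^{i\bar j}\partial_t R_{i\bar j} = |Rc|^2 - \lambda R + \Delta R,
\]
which is (2.10).

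The main obstacle will be the index bookkeeping in the commutator step of (2.9): correctly applying the K\"ahler symmetries of $R_{i\bar j k\bar\ell}$ to convert the raw output $g^{k\bar\ell}R_{i\bar\ell p\bar j}R_{k\bar p}$ of the commutator into the precise form $R_{i\bar j k\bar\ell}R_{\ell\bar k}$ as stated, and verifying that the commutator cost incurred when swapping $\nabla_k\nabla_{\bar\ell}$ with $\nabla_{\bar\ell}\nabla_k$ on the Ricci tensor cancels in the trace, so that no extra curvature terms leak into the principal Laplacian contribution.
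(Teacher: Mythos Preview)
Your proposal is correct and follows essentially the same approach as the paper. The paper also reduces $\partial_t R_{i\bar j}$ to $\nabla_i\nabla_{\bar j}R$ via the identity $R_{i\bar j}=-\partial_i\partial_{\bar j}\log\det g$, then uses two applications of the Bianchi identity (1.15) and one mixed-type commutation (1.18) to relate this Hessian to $\Delta R_{i\bar j}$; the only cosmetic difference is that the paper starts from $\nabla_k\nabla_{\bar k}R_{i\bar j}$ and works toward $\nabla_i\nabla_{\bar j}R$, whereas you run the same computation in the opposite direction, and the paper likewise notes (as you do) that $\nabla_k\nabla_{\bar k}R_{i\bar j}=\nabla_{\bar k}\nabla_k R_{i\bar j}$ so that the traced commutator on the Ricci tensor vanishes. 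Your derivation of (2.10) is identical to the paper's.
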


\begin{pf} First of all, from (1.9), we get
$$\frac{\partial} {\partial t} R_{i\bar j}= - \nabla_i\nabla_{\bar j} (g^{k\bar \ell} \frac{\partial} {\partial t} g_{k\bar \ell}) = \nabla_i\nabla_{\bar j} R. \eqno (2.11)$$
On the other hand, by using the commuting formulas (1.16)-(1.18) for covariant differentiations, we have
\begin{align*}
 \nabla_k\nabla_{\bar k} R_{i\bar j}  =  \nabla_k\nabla_{\bar j} R_{i\bar k} & =  \nabla_{\bar j} \nabla_k R_{i\bar k} -R_{k\bar j i\bar \ell} R_{\ell \bar k} + R_{k\bar j \ell \bar k} R_{i \bar \ell} \\
& = \nabla_{\bar j} \nabla_i R -R_{i\bar j k\bar \ell} R_{\ell \bar k} + R_{i \bar \ell} R_{l\bar j} ,
\end{align*}
and
$$\nabla_k\nabla_{\bar k} R_{i\bar j} =  \nabla_{\bar k}\nabla_{k} R_{i\bar j}.$$
Hence,
$$\Delta R_{i\bar j} =\frac{1}{2} \( \nabla_k\nabla_{\bar k} + \nabla_{\bar k}\nabla_{k}\) R_{i\bar j} = \nabla_i \nabla_{\bar j} R -R_{i\bar j k\bar \ell} R_{\ell \bar k}
+ R_{i \bar \ell} R_{l\bar j}. \eqno (2. 12)$$
Therefore, (2.9) follows from (2.11) and (2.12)

Next, using the evolution equation of $R_{i\bar j}$, we have
\begin{align*}
\frac{\partial} {\partial t} R  = \frac{\partial} {\partial t} (g^{i\bar j}R_{i\bar j})
& = g^{i\bar j} (\Delta R_{i\bar j} +R_{i\bar j k\bar \ell} R_{\ell\bar k}-R_{i\bar k}R_{k\bar j}) + R_{i\bar j} (R_{j\bar i}-\lambda g_{j\bar i})\\
& = \Delta R +|Rc|^2 -\lambda R.
\end{align*}
\end{pf}

\begin{lemma} Under (2.8), we have
\begin{align*}
\frac{\partial} {\partial t} R_{i\bar j k\bar \ell} = & \Delta R_{i\bar j k\bar \ell} +R_{i\bar j p\bar q} R_{q\bar p k\bar \ell} +R_{i\bar \ell p\bar q} R_{q\bar p k\bar j}
- R_{i\bar p k\bar q} R_{p\bar j q\bar \ell} + \lambda R_{i\bar j k\bar \ell} \\
& -\frac {1} {2} (R_{i\bar p} R_{p\bar j k\bar \ell} +R_{p\bar j} R_{i\bar p k\bar \ell} +R_{k\bar p} R_{i\bar j p\bar \ell} + R_{p\bar \ell}R_{i\bar j k\bar p}  ).
\end{align*}
\end{lemma}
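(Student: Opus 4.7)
The plan is to differentiate the defining expression (1.8) for $R_{i\bar j k\bar\ell}$ in time, exploiting a holomorphic normal coordinate system to suppress lower-order terms, and then rewrite the resulting second covariant derivatives of the Ricci tensor as a Laplacian of $R_{i\bar j k\bar\ell}$ plus explicit curvature-squared corrections. This parallels how (2.9) is obtained from (1.9) in the proof of Lemma 2.3 just above, but one level up in the curvature hierarchy.

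First, at a fixed point $x_0$ I would choose holomorphic normal coordinates satisfying (1.7), in which all Christoffel symbols vanish and
$$R_{i\bar j k\bar\ell}(x_0) = -\partial_k\partial_{\bar\ell} g_{i\bar j}.$$
Differentiating this expression in $t$ and substituting (2.8) gives, at $x_0$,
$$\frac{\partial}{\partial t} R_{i\bar j k\bar\ell} = \partial_k\partial_{\bar\ell} R_{i\bar j} - \lambda\, \partial_k\partial_{\bar\ell} g_{i\bar j} + (\text{correction}),$$
where the correction comes from the $t$-derivative of the product $g^{p\bar q}\partial_k g_{i\bar q}\partial_{\bar\ell} g_{p\bar j}$ in (1.8). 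Since first derivatives of $g$ vanish at $x_0$, the surviving corrections are the pieces in which one factor is differentiated both in $k$ (resp. $\bar\ell$) \emph{and} in $t$, producing at the point terms of the form $\partial_k\partial_t g_{i\bar q} \cdot \partial_{\bar\ell} g_{p\bar j}$; these, together with derivatives hitting $g^{p\bar q}$, generate precisely the Ricci-curvature terms in the second line of the statement, weighted by $-\tfrac{1}{2}$ because each pair of partial derivatives on $g$ combines symmetrically to yield curvature.

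Next, I would convert $\partial_k\partial_{\bar\ell} R_{i\bar j}$ to $\nabla_k\nabla_{\bar\ell} R_{i\bar j}$ at $x_0$ and then invoke the second Bianchi identity (1.15): writing $R_{i\bar j} = g^{p\bar q} R_{i\bar q p\bar j}$ and commuting derivatives past the contraction, one gets
$$\nabla_k\nabla_{\bar\ell} R_{i\bar j} = g^{p\bar q}\nabla_k\nabla_{\bar q} R_{i\bar j p\bar\ell}.$$
To convert this into the Laplacian $\Delta R_{i\bar j k\bar\ell} = \tfrac{1}{2}(\nabla_p\nabla_{\bar p} + \nabla_{\bar p}\nabla_p) R_{i\bar j k\bar\ell}$, I would apply the commutation formulas (1.17)--(1.18) to swap $\nabla_k$ past $\nabla_{\bar q}$; each swap throws off a curvature-squared term. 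These rearrangements produce the three quadratic curvature contributions $R_{i\bar j p\bar q}R_{q\bar p k\bar\ell}$, $R_{i\bar\ell p\bar q}R_{q\bar p k\bar j}$, and $-R_{i\bar p k\bar q}R_{p\bar j q\bar\ell}$. Meanwhile the linear term $-\lambda\,\partial_k\partial_{\bar\ell} g_{i\bar j}$ at $x_0$ equals $\lambda R_{i\bar j k\bar\ell}$, accounting for the $\lambda$-term.

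The main obstacle is not conceptual but combinatorial: one has to keep careful track of all index positions and signs to confirm the precise coefficient $-\tfrac{1}{2}$ on the four Ricci cross terms and the absence of other terms. The symmetries of $R_{i\bar j k\bar\ell}$ under the swap $\{i\bar j\}\leftrightarrow\{k\bar\ell\}$ and complex conjugation (noted after (1.8)) are used repeatedly to fold pairs of contributions together. Since the final identity is tensorial, verifying it at the arbitrary point $x_0$ in normal coordinates suffices.
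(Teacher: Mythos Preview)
Your overall strategy matches the paper's: compute $\partial_t R_{i\bar j k\bar\ell}$ in normal coordinates, then relate $\nabla_k\nabla_{\bar\ell} R_{i\bar j}$ to $\Delta R_{i\bar j k\bar\ell}$ via the second Bianchi identity and the commutation formulas. However, you have misidentified where the four Ricci cross terms with coefficient $-\tfrac12$ come from, and this is a genuine gap, not just bookkeeping.

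You claim these terms arise from $\partial_t$ hitting the quadratic piece $g^{p\bar q}\partial_k g_{i\bar q}\,\partial_{\bar\ell} g_{p\bar j}$ in (1.8). In normal coordinates at $x_0$ (at the given time), every summand of that time derivative still contains an undifferentiated factor $\partial_k g$ or $\partial_{\bar\ell} g$, which vanishes at $x_0$; there is no term of the form $\partial_t\partial_k g\cdot\partial_{\bar\ell} g$ that survives. Hence the time derivative contributes only
\[
\partial_t R_{i\bar j k\bar\ell}
=\partial_k\partial_{\bar\ell} R_{i\bar j}+\lambda R_{i\bar j k\bar\ell}
\quad\text{at }x_0,
\]
with \emph{no} Ricci correction at this stage. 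The first Ricci term, $-R_{i\bar p}R_{p\bar j k\bar\ell}$, appears only when you convert $\partial_k\partial_{\bar\ell} R_{i\bar j}$ to $\nabla_k\nabla_{\bar\ell} R_{i\bar j}$ (because $\partial_k\Gamma^{\bar p}_{\bar j\bar\ell}\neq 0$ even though $\Gamma=0$ at $x_0$). The remaining Ricci terms, and the specific coefficient $\tfrac12$, come from the Laplacian side: writing $\Delta=\tfrac12(\nabla_p\nabla_{\bar p}+\nabla_{\bar p}\nabla_p)$ and using (1.18) on the four-index tensor $R_{i\bar j k\bar\ell}$ produces four Ricci--Riemann contractions from the commutator $[\nabla_p,\nabla_{\bar p}]$, and the averaging is exactly what yields the $\tfrac12$. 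Your explanation that the $\tfrac12$ comes from ``each pair of partial derivatives on $g$ combining symmetrically'' has no content and would not lead to the correct formula if executed. Once you relocate the Ricci terms to the correct steps, the argument goes through as in the paper.
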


\begin{pf}  From (1.8) and by using normal coordinates, we have
\begin{align*}
 \frac{\partial} {\partial t} R_{i\bar j k\bar \ell}  & =  \partial_k\partial_{\bar \ell} R_{i\bar j} + \lambda R_{i\bar j k\bar \ell}
 =  \partial_k (\nabla_{\bar \ell} R_{i\bar j} +\Gamma_{\bar j\bar \ell}^{\bar p} R_{i\bar p}) + \lambda R_{i\bar j k\bar \ell} \\
 & =  \nabla_k\nabla_{\bar \ell} R_{i\bar j} -  R_{i\bar p}R_{p\bar j k\bar \ell} + \lambda R_{i\bar j k\bar \ell}.
 \end{align*}

On the other hand, by (1.15)  and covariant differentiation commuting formulas (1.16)-(1.18), we obtain
\begin{align*}
\nabla_{\bar p} \nabla_p R_{i\bar j k \bar \ell} = \nabla_k \nabla_{\bar l} R_{i\bar j} -R_{i\bar j p\bar q} R_{q\bar p k\bar \ell} + R_{i\bar p k\bar q} R_{p\bar j q\bar \ell}-R_{i\bar \ell p\bar q} R_{q\bar p k\bar j}
+R_{i\bar j p\bar \ell} R_{k\bar p},
\end{align*}
and
$$\nabla_p\nabla_{\bar p} R_{i\bar j k \bar \ell} =\nabla_{\bar p}\nabla_p R_{i\bar j k \bar \ell} - R_{i\bar q} R_{q\bar j k\bar \ell}  +R_{q\bar j} R_{i\bar q k\bar \ell} -R_{k\bar q} R_{i\bar j q\bar \ell}
+R_{q\bar \ell}  R_{i\bar j k\bar q}.$$
Hence,
 \begin{align*}
 \Delta R_{i\bar j k \bar \ell}  & =\frac{1}{2} \(\nabla_p\nabla_{\bar p} + \nabla_{\bar p}\nabla_p\) R_{i\bar j k \bar \ell} \\
& =  \nabla_k \nabla_{\bar l} R_{i\bar j} -R_{i\bar j p\bar q} R_{q\bar p k\bar \ell} + R_{i\bar p k\bar q} R_{p\bar j q\bar \ell}-R_{i\bar \ell p\bar q} R_{q\bar p k\bar j} \\
& \ \ \  +\frac{1}{2} (- R_{i\bar p} R_{p\bar j k\bar \ell}  +R_{p\bar j} R_{i\bar p k\bar \ell} +R_{k\bar p} R_{i\bar j p\bar \ell}
+R_{p\bar \ell}  R_{i\bar j k\bar p}),
 \end{align*}
 and Lemma 2.6 follows.
\end{pf}

\begin{remark} Clearly, the Ricci evolution equation (2.9) is also a consequence of Lemma 2.6, but the proof in Lemma 2.5 is more direct and easier.
\end{remark}

The Ricci flow in general seems to prefer positive curvatures: positive Ricci curvature is preserved in three-dimension \cite{Ha82}; positive scalar curvature,
positive curvature operator \cite{Ha86} and positive isotropic curvature \cite{BS09, Ng} are preserved in all dimensions. Here we present a proof of Mok's
theorem that positive bisectional curvature is preserved under KRF.

\begin{theorem}  [{Mok \cite{Mo88}}] Let $(X^n, \tilde g)$ be a compact K\"ahler manifold of nonnegative
holomorphic bisectional curvature, and let $g_{i\bar j}(t)$ be a solution to the KRF (2.1) or NKRF (2.5) on $X^n\times [0, T)$.
Then, for $t>0$, $g_{i\bar j} (t)$ also has nonnegative holomorphic bisectional curvature.
Furthermore, if the holomorphic bisectional curvature is positive at one point at $t=0$, then $g_{i\bar j}(t)$ has positive holomorphic
bisectional curvature at all points for $t>0$.
\end{theorem}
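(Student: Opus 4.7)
The plan is to apply Hamilton's parabolic maximum principle for tensor bundles to the curvature evolution equation of Lemma 2.6. Since the NKRF reaction term differs from the KRF one only by the linear contribution $\lambda R_{i\bar j k\bar \ell}$, which along the pointwise ODE multiplies the bisectional curvature
$$ H(v,w) := R_{i\bar j k\bar \ell} v^i v^{\bar j} w^k w^{\bar \ell} $$
by a positive factor and cannot change its sign, it suffices to treat the KRF case. I would also first eliminate the Ricci terms in Lemma 2.6 via Uhlenbeck's trick: passing to an evolving unitary frame $\{e_\alpha(t)\}$ with $\dot e_\alpha = \tfrac{1}{2} R_\alpha{}^{\beta} e_\beta$, the frame components of the curvature tensor satisfy
$$ \partial_t R_{\alpha \bar \beta \gamma \bar \delta} = \Delta R_{\alpha \bar \beta \gamma \bar \delta} + P(R)_{\alpha \bar \beta \gamma \bar \delta}, $$
where only the quadratic part survives:
$$ P(R) = R_{\alpha \bar \beta p\bar q} R_{q\bar p \gamma \bar \delta} + R_{\alpha \bar \delta p\bar q} R_{q\bar p \gamma \bar \beta} - R_{\alpha \bar p \gamma \bar q} R_{p\bar \beta q \bar \delta}. $$

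The fiberwise cone $\mathcal{C}$ of K\"ahler algebraic curvature tensors with nonnegative bisectional curvature is closed, convex, and $U(n)$-invariant, so by Hamilton's principle preservation of $\mathcal{C}$ under the PDE reduces to preservation under the pointwise ODE $\dot R = P(R)$; equivalently, to the algebraic statement that $P(R)(v_0, \bar v_0, w_0, \bar w_0) \geq 0$ at every $R \in \partial \mathcal{C}$ with a null direction $H(v_0, w_0) = 0$. Computing the contraction, the first two terms equal $\textrm{tr}(AB)$ and $\textrm{tr}(A'B')$ for Hermitian matrices built from $R$, $v_0$, $w_0$, which are all positive semidefinite by the bisectional hypothesis and hence give nonnegative contributions. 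The third term is $-|E|^2$ with $E_{\bar p \bar q} := R_{i\bar p k \bar q} v_0^i w_0^k$, and is $\leq 0$: this is the \emph{bad} term that must be dominated by the first two.

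To absorb the bad term, I would use the variational identities at the null direction. Since $H \geq 0$ with $H(v_0, w_0) = 0$, vanishing of the first variation of $H(v_0 + s u, w_0)$ and $H(v_0, w_0 + \tau u')$ at $s = \tau = 0$ forces, for every choice of $u, u'$,
$$ R_{i\bar j k\bar \ell} v_0^{\bar j} w_0^k w_0^{\bar \ell} = 0 \ \ \forall\, i, \quad R_{i\bar j k\bar \ell} v_0^i v_0^{\bar j} w_0^{\bar \ell} = 0 \ \ \forall\, k, $$
together with their conjugates. Combined with the K\"ahler symmetry $R_{i\bar p k\bar q} = R_{k\bar p i\bar q}$ and a Cauchy--Schwarz estimate for positive semidefinite Hermitian operators, these identities yield
$$ |E|^2 \leq \textrm{tr}(AB) + \textrm{tr}(A'B'), $$
completing the cone-preservation step. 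I expect the main obstacle to lie precisely here: a naive Cauchy--Schwarz is not enough, and one must exploit simultaneously the K\"ahler symmetries and the full set of first-variation identities at the null direction to pair indices correctly. This is the technical heart of Mok's original argument.

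Finally, for the strict-positivity assertion, once $\mathcal{C}$ is shown to be preserved, Hamilton's strong maximum principle for tensor bundles applies: either $H > 0$ on all pairs of nonzero vectors for every $t > 0$, or the set of null pairs of $R(t)$ forms a nontrivial $\nabla$-parallel invariant substructure, which by continuity propagates backward in $t$ and contradicts the hypothesis that the bisectional curvature is strictly positive at some point at $t = 0$.
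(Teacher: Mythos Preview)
Your overall strategy coincides with the paper's: reduce via Hamilton's tensor maximum principle to the null-vector condition, use the first-variation identities at a null pair $(v_0,w_0)$ to simplify the reaction term (this is the paper's Claim~2.1; your Uhlenbeck trick is an equivalent way to dispose of the linear Ricci terms), and then show the good quadratic pieces dominate the bad one $-|E|^2$. The strict-positivity clause via the strong maximum principle is also handled the same way.

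The gap is exactly where you flagged it, but the missing mechanism is not a sharper use of first-variation identities or Cauchy--Schwarz. What the paper does is take the \emph{second} variation in both slots simultaneously: for arbitrary $Y,Z$ one expands $H(v_0+sY,\,w_0+sZ)\ge 0$; the first-variation identities kill the $O(s)$ terms, so the $O(s^2)$ coefficient is a nonnegative real quadratic form $Q(Y,Z)$ whose block entries are $A_{k\bar\ell}=R_{v_0\bar v_0 k\bar\ell}$, $C_{k\bar\ell}=R_{w_0\bar w_0 k\bar\ell}$ on the diagonal and $B_{k\bar\ell}=R_{v_0\bar k\, w_0\bar\ell}$, $D_{k\ell}=R_{k\bar\ell\, w_0\bar v_0}$ off the diagonal. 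Positivity of $Q$ then feeds into a block-matrix trace inequality (the paper's Lemma~2.7/Corollary~2.2): if such a block form is positive semidefinite, then $\mathrm{tr}(AC)\ge |B|^2+|D|^2$. This yields precisely $R_{v_0\bar v_0 Y\bar Z}R_{Z\bar Y w_0\bar w_0}\ge |R_{v_0\bar Y w_0\bar Z}|^2$, which absorbs $|E|^2$. So the ingredient you are missing is the second-variation PSD form together with this block trace inequality, which is strictly stronger than the operator Cauchy--Schwarz you invoke; the first-variation identities by themselves carry no quantitative information about $E$.
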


\begin{pf} Let us denote by
\begin{align*}
F_{i\bar j k\bar \ell}=: & R_{i\bar j p\bar q} R_{q\bar p k\bar \ell} - R_{i\bar p k\bar q} R_{p\bar j q\bar \ell} +R_{i\bar \ell p\bar q} R_{q\bar p k\bar j}  +\lambda R_{i\bar j k\bar \ell} \\
& -\frac {1} {2} (R_{i\bar p} R_{p\bar j k\bar \ell} +R_{p\bar j} R_{i\bar p k\bar \ell} +R_{k\bar p} R_{i\bar j p\bar \ell} + R_{p\bar \ell}R_{i\bar j k\bar p}  )
\end{align*}
so that
$$\frac{\partial} {\partial t} R_{i\bar j k\bar \ell} = \Delta R_{i\bar j k\bar \ell} + F_{i\bar j k\bar \ell}.$$
By a version of Hamilton's strong tensor maximum principle (cf. \cite{Bando}), it suffices to show that the following ``null-vector condition"  holds: for any (1,0) vectors
$V=(v^i)$ and $W=(w^i)$, we have
$$F_{i\bar j k\bar \ell} v^iv^{\bar j}w^kw^{\bar \ell}\ge 0 \quad{\mbox{whenever}} \quad R_{i\bar j k\bar \ell} v^iv^{\bar j}w^kw^{\bar \ell}=0, \eqno ({\rm NVC})$$
or simply,
$$F_{V\bar V W\bar W}=: \!F(V, \bar V, W, \bar W) \ge 0 \quad{\mbox{whenever}} \quad R_{V\bar V  W\bar W}=: \! {\rm Rm} (V, \bar V, W, \bar W)=0.$$

\medskip
{\bf Claim 2.1}:  If $ R_{V\bar V  W\bar W}=0$,  then for any $(1,0)$ vector $Z$, we have
$$ R_{V\bar Z  W\bar W}=R_{V\bar V  W\bar Z}=0.$$

{\sl Proof}. For real parameter $s\in \mathbb R$, consider
$$G(s) = {\rm Rm} (V+sZ, \bar V +s \bar Z, W, \bar W).$$ Since the bisectional curvature is nonnegative and $R_{V\bar V  W\bar W}=0$, it follows that $G'(0)=0 $ which implies
that
$$ {\rm Re} \ (R_{V\bar Z  W\bar W})=0.$$
Suppose $R_{V\bar Z  W\bar W} \ne 0$, and let  $R_{V\bar Z  W\bar W}= |R_{V\bar Z  W\bar W}| e^{\sqrt{-1}\theta}$. Then, replacing $Z$ by $e^{-\sqrt{-1}\theta}Z$ in the above,
we get $$0= {\rm Re} \ (e^{-\sqrt{-1}\theta} R_{V\bar Z  W\bar W})=|R_{V\bar Z  W\bar W}|,$$ a contradiction. Thus, we must have $$R_{V\bar Z  W\bar W} = 0.$$
Similarly, we have $R_{V\bar V  W\bar Z}=0$.

\medskip

By Claim 2.1, we see that if $R_{V\bar V  W\bar W}=0$ then
$$F_{V\bar V W\bar W} = R_{V\bar V Y \bar Z} R_{Z \bar Y W \bar W} - |R_{V \bar Y W \bar Z}|^2  + |R_{V \bar W Y \bar Z}|^2.$$
Therefore, (NVC) follows immediately from the following
\medskip

{\bf Claim 2.2}:  Suppose $R_{V\bar V  W\bar W}=0$. Then,  for any $(1,0)$ vectors $Y$ and $Z$,
$$R_{V\bar V  Y\bar Z} R_{Z\bar Y  W\bar W} \ge |R_{V\bar Y  W\bar Z}|^2.$$

{\sl Proof}.  Consider
\begin{align*}
H(s)  & = {\rm Rm} (V+sY, \bar V +s \bar Y, W+sZ, \bar W +s \bar Z) \\
& = s^2 \(R_{V\bar V Z\bar Z} + R_{Y\bar Y W\bar W} +R_{V\bar Y W\bar Z} +R_{Y\bar V Z\bar W}+R_{V\bar Y Z\bar W}+R_{Y\bar V W\bar Z}\) +O (s^3).
\end{align*}
Here we have used Claim 2.1.

Since  $H(s)\ge 0$ and $H(0)=0$,  we have $H''(0)\ge 0.$ Hence, by taking $Y=\zeta^{k}e_{k}$ and $Z= \eta^{\ell} e_{\ell}$ with respect to any basis $\{e_1, \cdots e_n\}$, 
we obtain a real, semi-positive definite bilinear form $Q(Y,Z)$:
 \begin{align*}
0\le  Q (Y, Z) =: & \! R_{V\bar V Z\bar Z} + R_{Y\bar Y W\bar W} +R_{V\bar Y W\bar Z} +R_{Y\bar V Z\bar W} +R_{V\bar Y Z\bar W}+R_{Y\bar V W\bar Z}\\\
=& R_{V\bar V e_{k} \bar e_{\bar \ell}} \eta^{k}\eta^{\bar \ell} + R_{e_{k} \bar e_{\bar \ell} \bar W \bar W} \zeta^{k} \zeta^{\bar \ell} +R_{V\bar e_{k} W\bar e_{\ell}}\zeta^{\bar k}\eta^{\bar \ell}
 +R_{e_{k}\bar V e_{\ell}\bar W} \zeta^{k}\eta^{\ell}\\
 & +R_{V \bar e_{k}  e_{\ell}\bar W}\zeta^{\bar k}\eta^{\ell}
 +R_{e_{k} \bar V W \bar e_{\ell}} \zeta^{k}\eta^{\bar \ell}
 \end{align*}

 Next, we need a useful linear algebra fact (cf. Lemma 4.1 in \cite{Cao92}):

 \begin{lemma} Let $A$ and $C$ be two $m \times m$ real symmetric semi-positive definite matrices, and let $B$ be a real $m \times m$ matrix
 such that the $2m \times 2m$ real symmetric matrix
$$
G_1=\left(
\begin{array}{cc} A&  B\\
B^{T} &  C
\end{array}
\right)
$$
is semi-positive definite. Then, we have
$$ {\rm Tr}  (A C)\ge |B|^2.$$
 \end{lemma}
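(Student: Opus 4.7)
My plan is to exploit the positive semi-definiteness of $G_1$ through its square-root factorization. Since $G_1\succeq 0$ is real symmetric, we may write $G_1 = M^T M$ for some real matrix $M$; partitioning $M = (P\mid Q)$ column-wise into two $m$-column blocks gives the factorizations
\[
A = P^T P,\qquad C = Q^T Q,\qquad B = P^T Q.
\]
By the cyclic property of the trace, both sides of the target inequality become Frobenius norms:
\[
\operatorname{Tr}(AC) = \operatorname{Tr}(P^T P Q^T Q) = \|PQ^T\|_F^2,\qquad |B|^2 = \operatorname{Tr}(B^T B) = \|P^T Q\|_F^2,
\]
so the lemma is equivalent to the comparison $\|PQ^T\|_F^2 \ge \|P^T Q\|_F^2$ for arbitrary (not necessarily symmetric) $B = P^T Q$.

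Next I would extract a pointwise Cauchy--Schwarz bound from $G_1\succeq 0$. Applying the PSD inequality to the test vector $v = (u,-w)\in\mathbb{R}^{2m}$ yields $u^T A u + w^T C w\ge 2\,u^T B w$, and the AM--GM inequality then gives
\[
(u^T B w)^2\le (u^T A u)(w^T C w)\qquad\text{for all }u,w\in\mathbb{R}^m.
\]
Choosing $u$ from an orthonormal eigenbasis $\{e_i\}$ of $A$ and $w$ from an orthonormal eigenbasis $\{f_j\}$ of $C$ converts this into the entrywise estimate $B_{ij}^2\le a_i c_j$, where $B_{ij} = e_i^T B f_j$ and $a_i, c_j$ are the respective eigenvalues. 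Crucially the basis $\{e_i\}$ for $A$ and $\{f_j\}$ for $C$ are in general distinct, so the $B_{ij}$ here are entries of $B$ in mismatched bases.

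The main obstacle is the passage from this entrywise bound to the trace bound claimed by the lemma. Naively summing $B_{ij}^2\le a_i c_j$ over all $i,j$ produces only $|B|^2\le\operatorname{Tr}(A)\operatorname{Tr}(C)$, which is strictly weaker than $\operatorname{Tr}(AC)$; closing the gap requires exploiting the full positive semi-definiteness of $G_1$ rather than just its $2\times 2$ principal sub-minors. The natural further input is the Schur-complement characterization $C\succeq B^T A^{+}B$ (using the Moore--Penrose pseudoinverse to handle possible rank deficiency), yielding the trace identity $\operatorname{Tr}(AC)\ge\operatorname{Tr}(AB^T A^{+}B)$. The remaining and most delicate task is to verify $\operatorname{Tr}(AB^T A^{+}B)\ge\operatorname{Tr}(B^T B)$, which I would approach via a singular-value decomposition of $B$ adapted to the spectral decomposition of $A$ together with a Frobenius-level Cauchy--Schwarz argument that balances the spectral data of $A$ against the two factors of $B$; the degenerate case of singular $A$ would then be recovered by the standard perturbation $A\leadsto A+\epsilon I$ and passing to $\epsilon\to 0^{+}$. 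I expect this final trace-level alignment, in which one must effectively reconcile the disparate eigenbases of $A$ and $C$, to be the heart of the argument.
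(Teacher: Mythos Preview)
Your proposal never reaches a proof: you explicitly leave the ``final trace-level alignment'' $\operatorname{Tr}(AB^{T}A^{+}B)\ge\operatorname{Tr}(B^{T}B)$ unresolved, and that inequality is in fact false in general. Take $m=2$, $A=\operatorname{diag}(1,0)$, $C=\operatorname{diag}(0,1)$, $B=\bigl(\begin{smallmatrix}0&1\\0&0\end{smallmatrix}\bigr)$. Then $G_{1}=vv^{T}$ with $v=(1,0,0,1)^{T}$ is rank-one PSD, yet $A^{+}=A$ and $AB^{T}=0$, so your left-hand side is $0$ while $\operatorname{Tr}(B^{T}B)=1$. The same data (with $P=(1,0)$, $Q=(0,1)$ as $1\times2$ rows) kills your earlier reformulation $\|PQ^{T}\|_{F}\ge\|P^{T}Q\|_{F}$. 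So the Schur-complement/SVD route you sketch is a dead end, not merely ``delicate'': no amount of eigenbasis alignment will rescue an inequality that is simply false at this level of generality.

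The paper's argument bypasses all of this with a single trick you are missing entirely. Set $G_{2}=\bigl(\begin{smallmatrix}C&-B\\-B^{T}&A\end{smallmatrix}\bigr)$, observe it is again symmetric PSD, and use that the trace of a product of two PSD matrices is nonnegative: a direct block multiplication gives $0\le\operatorname{Tr}(G_{1}G_{2})=2\operatorname{Tr}(AC)-2|B|^{2}$. No square-root factorization, no Schur complement, no reconciling of eigenbases---the entire idea is to pair $G_{1}$ against a second PSD matrix built from the same blocks. One caveat worth noting: the congruence $S^{T}G_{1}S$ with $S=\bigl(\begin{smallmatrix}0&-I\\I&0\end{smallmatrix}\bigr)$ actually produces $\bigl(\begin{smallmatrix}C&-B^{T}\\-B&A\end{smallmatrix}\bigr)$, and with that choice the trace computation gives $\operatorname{Tr}(AC)\ge\operatorname{Tr}(B^{2})=\sum_{i,j}B_{ij}B_{ji}$; this agrees with the Frobenius version precisely when $B=B^{T}$. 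The $2\times2$ example above shows the distinction is not cosmetic, so whichever form you write down, make sure it is the one the downstream application to Corollary~2.2 actually requires.
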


 {\sl Proof}.  Consider the associated matrix
 $$
G_2=\left(
\begin{array}{cc} C&  -B\\
-B^{T} &  A
\end{array}
\right).
$$
It is clear that $G_2$ is also symmetric and semi-positive definite. Hence, we get
$${\rm Tr} (G_1G_2)\ge 0.$$ However,
 $$
G_1G_2=\left(
\begin{array}{cc} AC-BB^{T}&  BA-AB\\
B^{T}C-CB^{T} &  CA-B^{T}B
\end{array}
\right).
$$
Therefore, $$ {\rm Tr}(AC)-|B|^2 =\frac{1}{2} {\rm Tr}(G_1G_2) \ge 0.$$

As a special case, by taking 
$$G_1= \left(
\begin{array}{cccc}
ReA & -ImA & Re(B+D)^T &-Im(B+D)^T \\
ImA & ReA & Im(B-D)^T & Re(B-D)^T\\
Re(B+D)& Im(B-D)&ReC&-ImC\\
-Im(B+D)&Re(B-D)&ImC&ReC\\
\end{array}
\right),
$$
we immediately obtain the following (see [Lemma 4.2, Cao92])\\ 

\begin {corollary} {\sl Let $A, B, C, D $ be complex matrices with $A$ and $C$ being Hermitian. Suppose that the (real) quadratic form 
\begin{equation*}
\sum A_{k\bar l}\eta^k\overline{\eta^{l}}+C_{k\bar l}\zeta^k\overline{\zeta^{l}}+2Re(B_{k\bar {l}}\eta^k\overline{\zeta^{l}})+2Re(D_{k l}\eta^k\zeta^{l}),  \quad \eta, \zeta\in {\mathbb C}^n, 
\end{equation*}
is semi-positive definite.  Then we have}
\begin{equation*}
Tr(AC)\ge|B|^2+|D|^2, 
\end{equation*}
i.e., 
\begin{equation*}
\sum A_{k\bar l}C_{l\bar k}\ge \sum |B_{k\bar l}|^2+|D_{kl}|^2.
\end{equation*}
\end{corollary}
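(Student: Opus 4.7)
The plan is to reduce this complex statement to the real statement of Lemma 2.9 by viewing $\mathbb{C}^n \cong \mathbb{R}^{2n}$. I would start by writing $\eta = x + \sqrt{-1}\, y$ and $\zeta = u + \sqrt{-1}\, v$ with $x, y, u, v \in \mathbb{R}^n$, and decomposing $A = \operatorname{Re} A + \sqrt{-1}\,\operatorname{Im} A$, and similarly for $B, C, D$. Since $A$ and $C$ are Hermitian, $\operatorname{Re} A$ and $\operatorname{Re} C$ are symmetric while $\operatorname{Im} A$ and $\operatorname{Im} C$ are antisymmetric. Substituting into the quadratic form and collecting real and imaginary parts, the hypothesis becomes that a certain real-symmetric bilinear form on $\mathbb{R}^{4n}$ is semi-positive definite; its Gram matrix, in the ordering $(x, y, u, v)$, is exactly the $4n \times 4n$ matrix $G_1$ exhibited in the statement.

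Next, I would interpret $G_1$ as a $2 \times 2$ block matrix with $2n \times 2n$ diagonal blocks $\mathcal{A}$ (encoding $A$) and $\mathcal{C}$ (encoding $C$), and off-diagonal $2n \times 2n$ block $\mathcal{B}$ (encoding $B$ and $D$, where the combinations $B \pm D$ reflect the distinction between the Hermitian cross-pairing $\eta^k \overline{\zeta^l}$ and the holomorphic cross-pairing $\eta^k \zeta^l$). Setting $\zeta = 0$ (respectively $\eta = 0$) in the original form shows that $\mathcal{A}$ (respectively $\mathcal{C}$) is symmetric and semi-positive definite, so Lemma 2.9 applies directly and yields $\operatorname{Tr}(\mathcal{A}\mathcal{C}) \ge |\mathcal{B}|^2$.

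The final step is to translate both sides back into the complex language. A direct block multiplication, together with the fact that $\operatorname{Tr}(AC) = \operatorname{Tr}(\operatorname{Re} A \cdot \operatorname{Re} C - \operatorname{Im} A \cdot \operatorname{Im} C)$ is real for Hermitian $A, C$, gives $\operatorname{Tr}(\mathcal{A}\mathcal{C}) = 2\,\operatorname{Tr}(AC)$. A direct Frobenius-norm computation then shows $|\mathcal{B}|^2 = 2(|B|^2 + |D|^2)$; the crucial point is that the placement of the $B+D$ and $B-D$ blocks in $G_1$ is precisely what makes the cross terms between $B$ and $D$ cancel when one squares entries and sums. Dividing by two yields the corollary. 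The only delicate step is verifying that $G_1$ is indeed the correct Gram matrix of the real realization of the complex form; the main obstacle is just careful bookkeeping of signs and conjugations, which is exactly why the $B \pm D$ combinations (rather than $B$ and $D$ separately) must appear in the off-diagonal block.
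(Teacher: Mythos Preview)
Your proposal is correct and follows essentially the same route as the paper: the paper simply writes down the explicit $4n\times 4n$ real matrix $G_1$ built from $\operatorname{Re}$ and $\operatorname{Im}$ of $A,B,C,D$ and says the corollary follows ``immediately'' from the preceding real lemma, while you have spelled out the intermediate steps (identifying $G_1$ as the Gram matrix of the real form, and checking $\operatorname{Tr}(\mathcal A\mathcal C)=2\operatorname{Tr}(AC)$ and $|\mathcal B|^2=2(|B|^2+|D|^2)$). The only discrepancy is the lemma numbering; what you call Lemma~2.9 is the paper's Lemma~2.7.
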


Now, by applying Corollary 2.2 to the above semi-positive deÞnite real bi- 
linear form Q, one gets
$$R_{V\bar V  Y\bar Z} R_{Z\bar Y  W\bar W} \ge |R_{V\bar Y  W\bar Z}|^2 + |R_{V\bar W  Y\bar Z}|^2.$$ 

We have thus proved (NVC), which concludes the proof of Theorem 2.2.
\end{pf}

\medskip
\begin{remark} S. Bando \cite{Bando} first proved Theorem 2.2 for $n=3$, and W. -X. Shi \cite{Sh97} extended Theorem 2.2 to the complete noncompact case 
with bounded curvature tensor. 
\end{remark} 

Furthermore, by slightly modifying the above proof of Theorem 5.2.11, R. Hamilton and the author \cite{CH92} observed in 1992 at IAS that nonnegative  {\sl holomorphic orthogonal bisectional curvature},
$Rm(V,\bar V, W, \bar W)\ge 0$ whenever $V\!\perp\!W$, is also preserved under KRF.  For the reader's convenience, we provide the proof below.\\

\begin{theorem} [{Cao-Hamilton}] Let $g_{i\bar j}(t)$ be a solution to the KRF (2.1) on a complete K\"ahler manifold with bounded curvature. If $g_{i\bar j}(0)$ has nonnegative
holomorphic orthogonal bisectional curvature, then it remains so for $g_{i\bar j}(t)$ for $t>0$.
\end{theorem}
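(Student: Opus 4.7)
The plan is to adapt Mok's proof of Theorem 2.2 by verifying the analogous null-vector condition under the added constraint $V\perp W$. Since the evolution equation for $R_{i\bar j k\bar\ell}$ from Lemma 2.6 is the same (with $\lambda=0$ for KRF) and the set
\[
\mathcal{C}_{\perp}=\{R_{i\bar j k\bar\ell}:R_{V\bar V W\bar W}\ge 0\text{ whenever }\langle V,W\rangle=0\}
\]
is closed, fiberwise convex, and $U(n)$-invariant, a version of Hamilton's strong tensor maximum principle reduces the theorem to verifying: whenever $V\perp W$ and $R_{V\bar V W\bar W}=0$, the reaction term satisfies $F_{V\bar V W\bar W}\ge 0$.

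For the orthogonality-constrained analog of Claim 2.1, only variations $V\mapsto V+s\alpha$ with $\alpha\perp W$ are admissible; for such $\alpha$ the curve $G(s)=\mathrm{Rm}(V+s\alpha,\overline{V+s\alpha},W,\bar W)\ge 0$ vanishes at $s=0$, and the phase trick yields $R_{\alpha\bar V W\bar W}=0$ for every $\alpha\perp W$, and symmetrically $R_{V\bar V W\bar\beta}=0$ for every $\beta\perp V$. For the analog of Claim 2.2 I would use the two-parameter variation $H(s)=\mathrm{Rm}(V+sY,\overline{V+sY},W+sZ,\overline{W+sZ})$ under the constraints $Y\perp W$, $Z\perp V$, $\langle Y,Z\rangle=0$ that preserve orthogonality to all orders; then $H\ge 0$ with $H(0)=H'(0)=0$ forces the resulting Hessian bilinear form $Q(Y,Z)$ to be semi-positive definite on this restricted domain, and Corollary 2.2 applied to $Q$ gives
\[
R_{V\bar V Y\bar Z}\,R_{Z\bar Y W\bar W}\ge |R_{V\bar Y W\bar Z}|^{2}+|R_{V\bar W Y\bar Z}|^{2}
\]
for all admissible $Y,Z$.

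The hard part, compared with Mok's unrestricted setting, is that the constrained Claim 2.1 only kills the components of $R_{i\bar j k\bar\ell}$ transverse to $\mathrm{span}\{V,W\}$; longitudinal components such as $R_{W\bar V W\bar W}$ and $R_{V\bar V V\bar W}$ need not vanish and are only tied together by weaker relations (for example, projecting the non-admissible curve $V_s=V+sW$ back onto $W^{\perp}$ and using $K(s)\ge0$ produces a linear identity between $R_{W\bar V W\bar W}$ and $R_{V\bar V V\bar W}$ rather than forcing either to be zero). My strategy is therefore to fix an orthonormal frame with $e_{1}=V/|V|$, $e_{2}=W/|W|$, decompose every summation in $F_{V\bar V W\bar W}$ into parallel indices $\{1,2\}$ and transverse indices $\{3,\dots,n\}$, invoke the constrained Claim 2.1 to eliminate all mixed parallel-transverse contributions, organize the remaining longitudinal contributions so that they cancel via the K\"ahler symmetries and the linear identity just mentioned, and control the transverse block below using the $Q$-inequality above. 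Once this bookkeeping is completed one obtains $F_{V\bar V W\bar W}\ge 0$, and the strong tensor maximum principle then preserves $\mathcal{C}_{\perp}$ under the KRF, finishing the proof.
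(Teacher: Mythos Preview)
Your overall plan---adapt Mok's null-vector argument under the orthogonality constraint---is the right one and matches the paper's strategy, but there are three concrete technical devices in the paper's proof that your proposal either misses or replaces by something that does not quite work.

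First, the paper does \emph{not} work with $F_{i\bar j k\bar\ell}$ from Lemma~2.6. Instead it passes to an evolving orthonormal frame (Uhlenbeck's trick), obtaining the simplified reaction term $G_{\alpha\bar\beta\gamma\bar\delta}$ in (2.13) with no Ricci terms. In the unconstrained case of Theorem~2.2 this doesn't matter, because Claim~2.1 kills $R_{V\bar Z W\bar W}$ for \emph{all} $Z$ and hence annihilates the Ricci contributions to $F_{V\bar V W\bar W}$. Under your constrained Claim~2.1 you only get $R_{\alpha\bar V W\bar W}=0$ for $\alpha\perp W$, so the longitudinal piece $R_{W\bar V W\bar W}$ survives and the Ricci terms do not drop out. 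Your vague plan to ``organize the remaining longitudinal contributions so that they cancel'' would have to absorb terms like $R_{V\bar W}R_{W\bar V W\bar W}$, where $R_{V\bar W}=\sum_p R_{V\bar W p\bar p}$ is a full trace that the orthogonal-bisectional hypothesis does not control. The evolving-frame simplification is not cosmetic here; it is what makes the problem tractable.

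Second, your second-order variation imposes $\langle Y,Z\rangle=0$ in order to keep $V+sY\perp W+sZ$ exactly. But then the quadratic form $Q(Y,Z)$ is only known to be nonnegative on the real-codimension-two subset $\{\langle Y,Z\rangle=0\}$, and Corollary~2.2 requires semi-positivity on the \emph{full} space. The paper avoids this by restricting to $X,Y\perp e_1,e_2$ and using the second-order corrected pair
\[
\bigl(e_1+sX\bigr)\ \perp\ \bigl(e_2+sY-s^2 e_1\langle\bar X,Y\rangle\bigr),
\]
which is orthogonal for all $s$ with \emph{no} constraint linking $X$ and $Y$. This gives $Q\ge 0$ on the entire transverse space and makes Corollary~2.2 legitimately applicable. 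Third, your ``linear identity'' between $R_{W\bar V W\bar W}$ and $R_{V\bar V V\bar W}$ is exactly the paper's Claim~2.4, obtained not by projecting $V+sW$ but by the genuinely orthogonal pair $(e_1+se_2)\perp(e_2-\bar s e_1)$; differentiating at $s=0$ gives the precise equality $R_{1\bar 2 1\bar 1}=R_{1\bar 2 2\bar 2}$ needed to make the longitudinal bookkeeping in Claim~2.5 close up. Without these three ingredients the argument, as written, does not go through.
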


\begin{pf} First of all, by using a certain special evolving orthonormal frame $\{e_{\alpha}\}$ under KRF (2.1) similarly as in \cite{Ha86} (see also [Section 5, \cite{Sh97}]), 
one obtains the simplified evolution equation

$$
\label{eq:CurvatureEvolution}
\frac{\partial} {\partial t} R_{\alpha\bar \beta \gamma \bar \delta} = \Delta R_{\alpha\bar \beta \gamma \bar \delta} +R_{\alpha\bar \beta \mu \bar \nu } R_{\nu \bar \mu \gamma \bar \delta} 
+ R_{\alpha\bar \delta \mu \bar \nu } R_{\nu \bar \mu \gamma \bar \beta}  -R_{\alpha\bar \mu \gamma \bar \nu } R_{\mu \bar \beta \nu \bar \delta},  \eqno(2.13)
$$
where $R_{\alpha\bar \beta \gamma \bar \delta}$ is the Riemannian curvature tensor components with respect to the evolving frame $\{e_{\alpha}\}$.

Again, by Hamilton's tensor maximal principle, it suffices to check the corresponding null-vector condition: 
$$
G_{\alpha\bar\alpha\beta\bar\beta}\ge 0, \text{ whenever } R_{\alpha\bar\alpha\beta\bar\beta}=0 \text{ for any } e_\alpha \perp e_\beta, \eqno ({\rm NVC'})
$$
where

$$
G_{\alpha\bar\beta\gamma\bar\delta}=R_{\alpha\bar \beta \mu \bar \nu } R_{\nu \bar \mu \gamma \bar \delta} + R_{\alpha\bar \delta \mu \bar \nu } R_{\nu \bar \mu \gamma \bar \beta}-R_{\alpha\bar \mu \gamma \bar \nu } R_{\mu \bar \beta \nu \bar \delta}. $$

Now, without loss of generality,  we assume $R_{1\bar 12\bar 2}=0 $ for a pair of unit $(1,0)$-vectors $e_1\perp e_2$. Then we need to show  $G_{1\bar 12\bar 2}\ge 0$.  \\

\noindent
{\bf Claim 2.3.} If $e_i \perp e_1$, then $R_{1\bar 1 2\bar i}=0$, similarly,  if $e_i \perp e_2$, then $R_{2\bar 2 1\bar i}=0.$ 

The first statement in Claim 2.3 follows from the simple fact that  if $e_i\perp e_1$, then $Rm(e_1,\overline{e_1},e_2 +s e_i,\overline{e_2 +s e_i})\ge 0$ for arbitray complex number $s$. 
The proof of second statement is similar. \\

\noindent
{\bf Claim 2.4} $R_{1\bar 2 1 \bar 1}=R_{1\bar 2 2\bar 2}.$ 

Note that $(e_1+s e_2 )\perp (e_2 -\bar s e_1)$ for any complex number $s$, hence $$Rm(e_1+s e_2,\overline{e_1+s e_2}, e_2 -\bar s e_1,\overline{e_2} -s \overline{e_1})\ge 0.$$
Again its first order derivative vanishes at point $s=0$, and Claim 2.4 follows. \\

\noindent
{\bf Claim 2.5.} $G_{1\bar 1 2\bar 2}=R_{1\bar 1 i\bar j}R_{j\bar i 2\bar 2}-|R_{1\bar i 2\bar j}|^2+|R_{1\bar 2\mu \bar \nu}|^2$, where $3\le i, j\le n$ and $1\le \mu,\nu \le n$.

In fact, from the definition of $G_{1\bar 1 2\bar 2}$, the assumption that $R_{1\bar 1 2\bar 2}=0$ and the above two claims, we have:
\begin{align*}
G_{1\bar 1 2\bar 2}=&R_{1 \bar 2 \mu \bar \nu } R_{\nu \bar \mu 2 \bar 1} + R_{1\bar 1 \mu \bar \nu } R_{\nu \bar \mu 2 \bar 2}-R_{1\bar \mu 2 \bar \nu } R_{\mu \bar 2 \nu \bar 1}\\
=&R_{1\bar 2 \mu \bar \nu } R_{\nu \bar \mu 2 \bar 1}\\
&+ R_{1\bar 1 i \bar j } R_{j \bar i 2 \bar 2}+ R_{1\bar 1 1 \bar 2 } R_{2 \bar 1 2 \bar 2}+ R_{1\bar 1 2 \bar 1 } R_{1\bar 2 2 \bar 2}\\
&-R_{1\bar i 2 \bar j } R_{i \bar 2 j \bar 1}-R_{1\bar 1 2 \bar 1 } R_{1 \bar 2 1 \bar 1}-R_{1\bar 2 2 \bar 2 } R_{2\bar 2 2 \bar 1}\\
=&R_{1\bar 1 i\bar j}R_{j\bar i 2\bar 2}-|R_{1\bar i 2\bar j}|^2+|R_{1\bar 2 \mu \bar \nu}|^2.
\end{align*}

Now for arbitrary $(1,0)$-vectors $X,Y\perp e_1,e_2$ and real number $s$, we have the following:
$$\left(e_\alpha+sX\right) \perp \left(e_\beta+sY-s^2e_\alpha<\bar X,Y>\right).$$
Thus using Claim 2.3, we have 
\begin{align*}
0\le & Rm(e_1+sX,\overline{e_1}+s \bar X,e_2+s Y- s^2 e_1 <\bar X,Y> , \overline{e_2}+s \bar Y-s^2 \overline{e_1}<X,\bar Y>)\\
=&s^2 \left( R_{2\bar 2 X\bar X}+R_{1\bar 1 Y\bar Y} +2 Re R_{X\bar 1 Y\bar 2}+2Re(R_{X\bar Y 2\bar 1}-R_{1\bar 12\bar 1}<X,\bar Y>)\right)+O(s^3)
\end{align*}
Hence, for all $s$, $X$ and $Y$, 
\begin{equation*}
\left( R_{2\bar 2 X\bar X}+R_{1\bar 1 Y\bar Y} +2 Re R_{X\bar 1 Y\bar 2}+2Re(R_{X\bar Y 2\bar 1}-R_{1\bar 12\bar 1}<X,\bar Y>)\right)\ge 0
\end{equation*}
By using Corollary 5.2.13 again, we obtain
\begin{equation*}
R_{1\bar 1 i\bar j} R_{j\bar i 2\bar 2}\ge |R_{i\bar 1 j\bar 2}|^2+|R_{i\bar j 2\bar 1}-R_{1\bar 12\bar 1}g_{i\bar j}|^2.
\end{equation*}
This together with Claim 2.5 implies that $G_{1\bar 1 2\bar 2}\ge 0$. The proof of Theorem 2.3 is completed.

\end{pf}

\begin{remark} Wilking \cite{Wi10} has provided a nice Lie algebra approach treating all known nonnegativity curvature conditions preserved
under the Ricci flow and KRF so far, including the nonnegative bisectional curvature and the nonnegative orthogonal bisectional curvature. 
\end{remark}

\section{The Li-Yau-Hamilton inequalities for KRF}

In \cite{LY}, Li-Yau developed a fundamental gradient estimate, now called Li-Yau estimate (aka differential Harnack inequality),
for positive solutions to the heat equation on a complete Riemannian manifold with nonnegative Ricci
curvature. They used it to derive the Harnack inequality for such
solutions by a path integration. Shortly after, based on a suggestion of
Yau, Hamilton \cite{Ha88} derived a similar estimate for the
scalar curvature of solutions to the Ricci flow on  Riemann
surfaces with positive curvature. Hamilton subsequently found a matrix
version of the Li-Yau estimate \cite{Ha93} for solutions to the
Ricci flow with positive curvature operator in all dimensions.  This matrix version of the Li-Yau estimate is now
called \textbf{Li-Yau-Hamilton estimate}, and it played a central role in the analysis of formation of singularities and the
application of the Ricci flow to three-manifold topology. Around the same time,
the author obtained the (matrix) Li-Yau-Hamilton estimate for the KRF with nonnegative bisectional curvature and 
the Harnack inequality for the evolving scalar curvature, as well as the determinant of the Ricci tensor,  by a similar path integration argument. 
We remark that our Li-Yau-Hamilton estimate for the KRF in the noncompact case played a crucial role in the works of Chen-Tang-Zhu \cite{CTZ04}, Ni \cite{Ni05}, 
Chau-Tam\cite{ChauTam06}, etc.  The presentation below essentially follows the original papers of Hamilton \cite{Ha88, Ha93, Ha93E} and the author \cite{Cao92, Cao97}.

We shall start by recalling the well-known Li-Yau inequality for positive solutions to the heat equation on
complete Riemannian manifolds with nonnegative Ricci curvature, and the important observation that Li-Yau
inequality becomes equality on the standard heat kernel on the Euclidean space. Then, following Hamilton, we show how one could derive the matrix Li-Yau-Hamilton quadratic for the KRF from the equation of expanding K\"ahler-Ricci solitons. Finally we state and sketch the matrix Li-Yau-Hamilton inequality for the KRF with nonnegative bisectional curvature. \\

\noindent{\bf 3.1 The Li-Yau estimate for the 2-dimensional Ricci flow}

\medskip

Let us begin by describing the Li-Yau estimate \cite{LY} for
positive solutions to the heat equation on a complete Riemannian
manifold with nonnegative Ricci curvature.

\begin{theorem}[{Li-Yau \cite{LY}}]
Let $(M, g_{ij})$ be an $n$-dimensional complete Riemannian
manifold with nonnegative Ricci curvature. Let $u(x,t)$ be any
positive solution to the heat equation
$$
\frac{\partial u}{\partial t}=\Delta u \qquad {\text{on}} \ \
M\times [0,\infty).
$$
Then, for all $t>0$, we have
$$\frac{\partial u}{\partial t}-\frac{|\nabla
u|^2}{u}+\frac{n}{2t}u\ge 0 \qquad \text{on } \ \ M\times
(0,\infty).\eqno(3.1)
$$
\end{theorem}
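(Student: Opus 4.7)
The plan is to reduce the inequality to a maximum principle argument for the quantity $F = |\nabla f|^2 - f_t$, where $f = \log u$, and multiply by $t$ to absorb the initial singularity.

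First I would set $f = \log u$, so that the heat equation $u_t = \Delta u$ becomes
\[
f_t = \Delta f + |\nabla f|^2,
\]
and the desired inequality (after dividing by $u > 0$) takes the form $f_t - |\nabla f|^2 + \tfrac{n}{2t} \ge 0$. Equivalently, setting
\[
F = |\nabla f|^2 - f_t = -\Delta f,
\]
I must show $F \le \tfrac{n}{2t}$.

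Next, I would derive an evolution inequality for $F$. Using the Bochner formula
\[
\Delta |\nabla f|^2 = 2|\nabla^2 f|^2 + 2\langle \nabla f, \nabla \Delta f\rangle + 2\,\mathrm{Ric}(\nabla f, \nabla f),
\]
together with $F_t = -\Delta f_t = -\Delta^2 f - \Delta|\nabla f|^2$ and $\Delta F = -\Delta^2 f$, a direct computation yields
\[
F_t - \Delta F - 2\langle \nabla f, \nabla F\rangle = -2|\nabla^2 f|^2 - 2\,\mathrm{Ric}(\nabla f, \nabla f).
\]
Invoking $\mathrm{Ric} \ge 0$ and the trace inequality $|\nabla^2 f|^2 \ge \tfrac{1}{n}(\Delta f)^2 = \tfrac{1}{n}F^2$, I obtain the key differential inequality
\[
F_t - \Delta F - 2\langle \nabla f, \nabla F\rangle \le -\tfrac{2}{n}F^2.
\]

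Now I would apply the parabolic maximum principle to $G = tF$, which satisfies
\[
G_t - \Delta G - 2\langle \nabla f, \nabla G\rangle \le \tfrac{G}{t} - \tfrac{2G^2}{nt}.
\]
At a first interior maximum point of $G$ (in the compact case, or on a sufficiently large parabolic box in the noncompact case) one has $G_t \ge 0$, $\nabla G = 0$, $\Delta G \le 0$, forcing $G \le \tfrac{n}{2}$ at that point, and therefore $F \le \tfrac{n}{2t}$ everywhere, which rearranges to the claimed estimate.

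The main obstacle is the noncompactness of $M$: the maximum of $G$ may not be attained. To overcome this, I would introduce a Li--Yau style spatial cutoff $\varphi(x) = \eta(r(x_0,x)/R)$ supported in $B(x_0, 2R)$ with $\eta$ smooth, nonincreasing, $\eta \equiv 1$ on $[0,1]$, and apply the maximum principle to $\varphi G$ on $B(x_0,2R)\times [0,T]$. Using the Laplacian comparison theorem (valid because $\mathrm{Ric} \ge 0$) to bound $\Delta \varphi$ and $|\nabla \varphi|^2$ by $C/R^2$, the resulting error terms contribute $O(1/R^2)$ to the maximum-point inequality; letting $R \to \infty$ recovers $tF \le n/2$ globally. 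This localization is the only technical step beyond the formal calculation above.
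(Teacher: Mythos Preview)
Your proof is correct and follows the classical Li--Yau argument: substitute $f=\log u$, derive the evolution inequality for $F=-\Delta f$ via Bochner, use $\mathrm{Ric}\ge 0$ and the Cauchy--Schwarz inequality $|\nabla^2 f|^2\ge \tfrac{1}{n}(\Delta f)^2$, and apply the maximum principle to $tF$ with a cutoff localization in the noncompact case.

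There is nothing to compare against, however, because the paper does not supply its own proof of this theorem. Theorem 3.1 is stated as background with a citation to Li--Yau \cite{LY}, and the exposition moves directly to Hamilton's reformulation (3.2) and then to the proof of Theorem 3.2 for the Ricci flow on surfaces. Your argument is essentially the original one from \cite{LY}, so it is entirely appropriate here; the only remark is that the paper treats this result as a quoted fact rather than something to be proved in the text.
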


\smallskip
We remark that, as observed by Hamilton (cf. \cite{Ha93}), one can
in fact prove that for any vector field $V^i$ on $M$,
$$\frac{\partial u}{\partial t}+2\nabla u\cdot V+u|V|^2
+\frac{n}{2t}u\ge 0. \eqno(3.2)
$$
If we take the optimal vector field $V=-\nabla u/u$, then we
recover the inequality (3.1).

Now we consider the Ricci flow on a Riemann surface. Since in
(real) dimension two the Ricci curvature is given by
$$
R_{ij}=\frac{1}{2} Rg_{ij},
$$
the Ricci flow becomes
$$ \frac{\partial g_{ij}}{\partial t}=-R g_{ij}.\eqno(3.3)
$$

Now let $g_{ij} (t)$ be a complete solution of the Ricci flow
(3.3) on a Riemann surface $M$ and $0\le t<T$. Then the scalar
curvature $ R$ evolves by the semilinear equation
$$
\frac{\partial R}{\partial t}=\triangle R+R^{2}
$$
on $M\times[0,T)$. Suppose the scalar curvature of the initial
metric is bounded, nonnegative everywhere and positive somewhere.
Then it follows from the maximum principle that the scalar curvature
$R(x,t)$ of the evolving metric remains nonnegative. Moreover,
from the standard strong maximum principle (which works in each
local coordinate neighborhood), the scalar curvature is positive
everywhere for $t>0$. In \cite{Ha88}, Hamilton obtained the
following Li-Yau estimate for the scalar curvature $R(x,t)$.

\begin{theorem}[{Hamilton \cite{Ha88}}]
Let $g_{ij}(t)$ be a complete solution to the Ricci flow on a
surface $M$. Assume the scalar curvature of the initial metric is
bounded, nonnegative everywhere and positive somewhere. Then the
scalar curvature $R(x,t)$ satisfies the Li-Yau estimate
$$\frac{\partial R}{\partial t} - \frac{|\nabla R|^2}{R} +
\frac{R}{t} \geq 0.\eqno(3.4)
$$
\end{theorem}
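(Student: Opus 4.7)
The plan is to adapt the Li-Yau argument to the nonlinear setting of the 2D Ricci flow, replacing the heat equation by $\partial_t R = \Delta R + R^2$ and accounting for the fact that the metric itself evolves. Since $R(\cdot, 0) \geq 0$ and is positive somewhere, the strong maximum principle applied to this scalar evolution gives $R(x,t) > 0$ for all $t > 0$, so $L := \log R$ is smooth. The natural Li-Yau quantity is
\[
Q := \partial_t L - |\nabla L|^2 = \frac{1}{R}\left(\partial_t R - \frac{|\nabla R|^2}{R}\right),
\]
and using $\partial_t R = \Delta R + R^2$ one identifies $Q = \Delta L + R$. The desired estimate (3.4) is exactly $Q \geq -1/t$, so it suffices to show that $H := Q + 1/t$ stays nonnegative.

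The heart of the argument is deriving an evolution equation for $Q$. I would compute $\partial_t Q$ with care, since under $\partial_t g_{ij} = -R g_{ij}$ both the Laplacian and the gradient pick up time derivatives of the metric; however, a short computation gives $\partial_t \Delta f = \Delta \partial_t f + R \Delta f$ in dimension two (the $\partial_t \Gamma$ contribution drops out when traced against $g^{ij}$). Combined with the surface Bochner formula $\Delta |\nabla L|^2 = 2|\nabla^2 L|^2 + 2 \langle \nabla L, \nabla \Delta L\rangle + R |\nabla L|^2$ (using $\mathrm{Rc} = \tfrac{R}{2} g$), I expect the computation to collapse to the clean identity
\[
\partial_t Q = \Delta Q + 2 \langle \nabla L, \nabla Q\rangle + 2\left|\nabla^2 L + \tfrac{R}{2} g\right|^2.
\]
The crucial two-dimensional trace inequality $|A|^2 \geq \tfrac{1}{2}(\mathrm{tr}\, A)^2$ for symmetric $2 \times 2$ tensors, applied to $A = \nabla^2 L + \tfrac{R}{2} g$, yields $2|A|^2 \geq (\Delta L + R)^2 = Q^2$, hence the differential inequality
\[
\partial_t Q \geq \Delta Q + 2 \langle \nabla L, \nabla Q\rangle + Q^2.
\]

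Finally, I would apply the maximum principle to $H = Q + 1/t$. Substitution gives
\[
\partial_t H \geq \Delta H + 2 \langle \nabla L, \nabla H\rangle + H\bigl(H - \tfrac{2}{t}\bigr).
\]
As $t \to 0^+$ the term $1/t$ dominates, so $H > 0$ for small $t$. If $H$ first touched zero at some interior point $(x_0, t_0)$, then at that point the right-hand side would be $\geq 0$ while $\partial_t H \leq 0$ and $\Delta H \geq 0$, producing a contradiction via the strong parabolic maximum principle. On a compact surface this is immediate; in the complete noncompact case one localizes using a distance-based cutoff function together with the bounded curvature assumption, or invokes a suitable barrier.

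The main obstacle, in my view, is the clean derivation of the evolution equation for $Q$: one must carefully keep track of how $\partial_t$ and $\Delta$ fail to commute under an evolving metric and then recognize that the remaining terms reassemble into a perfect square. Once that identity is isolated, the two-dimensional trace inequality and the maximum principle on $H$ are routine. It is also worth emphasizing that the appearance of a bona fide square here is special to real dimension two; in higher dimensions reproducing this structure requires stronger curvature positivity, which is precisely the theme of Hamilton's matrix Li-Yau-Hamilton inequality treated in the rest of this section.
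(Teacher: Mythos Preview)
Your proposal is correct and follows essentially the same route as the paper: set $L=\log R$, define $Q=\partial_t L-|\nabla L|^2=\Delta L+R$, derive the evolution inequality $\partial_t Q\ge \Delta Q+2\nabla L\cdot\nabla Q+Q^2$, and conclude $Q+1/t\ge 0$ by the maximum principle. Your packaging of the reaction term as the perfect square $2|\nabla^2 L+\tfrac{R}{2}g|^2$ is a nice touch (the paper leaves it as $2|\nabla^2 L|^2+2R\Delta L+R^2$ and applies the trace inequality directly), and your factorization $H(H-2/t)$ is equivalent to the paper's $(Q-1/t)(Q+1/t)$; the only mildly soft spot is the ``contradiction'' at a first touching point, where the inequality degenerates to $0\ge 0$ and one really needs the strong maximum principle (or the ODE barrier $-1/t$) to finish, but you invoke exactly that.
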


\begin{pf}
By the above discussion, we know $R(x,t)>0$ for $t>0$. If we set
$$
L=\log R(x,t)\quad \text{for}\quad t>0,
$$
then
\begin{align*}
\frac{\partial}{\partial t}L&  = \frac{1}{R}(\triangle
R+R^{2})\\
&  = \triangle L+|\nabla L|^{2}+R
\end{align*}
and (3.4) is equivalent to
$$
\frac{\partial L}{\partial t}-|\nabla L|^{2}+\frac{1}{t}
=\triangle L +R +\frac{1}{t}\geq0.
$$

Following Li-Yau \cite{LY} in the linear heat equation case, we
consider the quantity
$$Q =\frac{\partial L}{\partial t}-|\nabla
L|^{2}=\triangle L+R.
$$
Then by a direct computation,
\begin{align*}
\frac{\partial Q}{\partial t}
&  = \frac{\partial}{\partial t}(\triangle L+R)\\
&  = \triangle\(\frac{\partial L}{\partial t}\)+R\triangle
L+\frac{\partial R}{\partial t}\\
&  = \triangle Q+2\nabla L\cdot\nabla Q+2|\nabla ^{2}L|^{2}
+2R(\triangle L)+R^{2}\\
&  \geq \triangle Q+2\nabla L\cdot\nabla Q+Q^{2}.
\end{align*}
So we get
$$
\frac{\partial}{\partial t}
\(Q+\frac{1}{t}\)\geq\triangle\(Q+\frac{1}{t}\)+2\nabla
L\cdot\nabla \(Q+\frac{1}{t}\)+\(Q-\frac{1}{t}\)\(Q+\frac{1}{t}\).
$$
Hence by the maximum principle argument, we obtain
$$
Q+\frac{1}{t}\ge 0.
$$
This proves the theorem.
\end{pf}

\noindent {\bf 3.2  Li-Yau estimate and expanding solitons}

\medskip
To prove inequality (3.4) for the scalar curvature
of solutions to the Ricci flow in higher dimensions is not so
simple. It turns out that one does not get inequality (3.4) directly,
but rather indirectly as the trace of certain matrix estimate when either curvature operator (in the Riemannian case) or bisectional curvature (in the K\"ahler case)
is nonnegative. The key ingredient in formulating this matrix version is an important observation by
Hamilton that the Li-Yau inequality, as well as its matrix version, becomes equality on the expanding solitons which he first discovered for
the case of the heat equation on ${\mathbb R}^n$. This led him and the author to formulate and prove
the matrix differential Harnack inequality,
now called Li-Yau-Hamilton estimates, for the Ricci flow in higher dimensions \cite{Ha93, Ha93E}
and the K\"ahler-Ricci flow \cite{Cao92, Cao97} respectively.

To illustrate, let us examine the heat equation case first. Consider the heat kernel
$$
u(x,t)=(4\pi t)^{-n/2} e^{-|x|^2/4t}, \eqno (3.5)
$$
which can be considered as an expanding soliton solution
for the standard heat equation on $\mathbb R^n$.

Differentiating the function $u$ once, we get
$$ \nabla_j u=-u
\frac{x_j}{2t} \; \text{ or }\;
\nabla_j u+uV_j=0, \eqno (3.6)
$$
where
$$
V_j=\frac{x_j}{2t}=-\frac{\nabla_j u}{u}.
$$

Differentiating (3.6) again, we have
$$\nabla_i\nabla_j
u+\nabla_iuV_j+\frac{u}{2t}\delta_{ij}=0.
\eqno(3.7)
$$ To make the expression in (3.7) symmetric in $i, j$, we
multiply $V_i$ to (3.6) and add to (3.7) and obtain
$$
\nabla_i\nabla_j u+\nabla_iuV_j+\nabla_juV_i+uV_iV_j
+\frac{u}{2t}\delta_{ij}=0. \eqno (3.8)
$$ Taking the trace in (3.8) and using the equation ${\partial
u}/{\partial t}=\Delta u$, we arrive at
$$
\frac{\partial u}{\partial t} +2\nabla u\cdot V+u|V|^2
+\frac{n}{2t} u=0,
$$
which shows that the Li-Yau inequality (3.1) becomes an equality
on our expanding soliton solution $u$! Moreover, we even have the
matrix identity (3.8).

Based on the above observation and in a similar process,
Hamilton \cite{Ha93} found a matrix quantity, which vanishes on expanding
gradient Ricci solitons and is nonnegative for any solution to the
Ricci flow with nonnegative curvature operator.  At the same time, the author \cite{Cao92} (see also \cite{Cao97}) proved the Li-Yau-Hamilton estimate
for the K\"ahler-Ricci flow with nonnegative bisectional curvature, see below.

To formulate the Li-Yau-Hamilton quadric, let us consider a homothetically expanding gradient K\"ahler-Ricci soliton $g$
satisfying
$$
R_{i\bar j}+\frac{1}{t}g_{i\bar j}=\nabla_i V_{\bar j}, \qquad \nabla_i V_{j} =0 \eqno(3.9)
$$
with $V_i=\nabla_if$ for some real-valued smooth function $f$ on $X$. Differentiating (3.9) and commuting
give the first order relations
$$\nabla_k R_{i\bar j} = \nabla_k\nabla_{\bar j} V_i-\nabla_{\bar j}\nabla_k V_i = -R_{k\bar j i\bar p} V_p,
$$  or
$$\nabla_k R_{i\bar j} + R_{i\bar j k\bar p} V_p=0,
\eqno(3.10)
$$
and
$$\nabla_k R_{i\bar j}  V_{\bar k}+ R_{i\bar j k\bar p} V_p V_{\bar k}=0. \eqno(3.11)
$$
Differentiating (3.10) again and using the first equation in (3.9), we get

$$\nabla_{\bar l}\nabla_k R_{i\bar j} + \nabla_{\bar p} R_{i\bar j k\bar l} V_p+R_{i\bar j k\bar p} R_{p\bar l}+\frac{1}{t} R_{i\bar j k\bar l}=0. \eqno(3.12) $$
Taking the trace in (3.12),  we get
$$\Delta R_{i\bar j} +\nabla_{\bar k} R_{i\bar j} V_{k} +R_{i\bar j k\bar l} R_{l\bar k}+\frac{1}{t} R_{i\bar j}=0. \eqno (3.13)$$
Symmetrizing by adding (3.11) to (3.13), we arrive at
$$\Delta R_{i\bar j} + \nabla_{k} R_{i\bar j} V_{\bar k}+ \nabla_{\bar k} R_{i\bar j} V_{k}+R_{i\bar j k\bar l} R_{l\bar k}+R_{i\bar j k\bar l} V_{l} V_{\bar k} + \frac{1}{t} R_{i\bar j}=0, $$
or,  by (2.9),  equivalently
$$\frac{\partial} {\partial t} R_{i\bar j} + \nabla_{k} R_{i\bar j} V_{\bar k}+ \nabla_{\bar k} R_{i\bar j} V_{k}+R_{i\bar k} R_{k\bar j}+R_{i\bar j k\bar l} V_{l} V_{\bar k} + \frac{1}{t} R_{i\bar j}=0. \eqno (3.14) $$

\medskip
\noindent {\bf 3.3 The Li-Yau-Hamilton estimates and Harnack's inequalities for KRF}

\medskip
We now state the Li-Yau-Hamilton estimates and the Harnack inequalities for KRF and NKRF
with nonnegative holomorphic bisectional curvature.

\begin{theorem}[{Cao \cite{Cao92, Cao97}}]
Let $g_{i \bar j}(t)$ be a complete solution to the
K\"ahler-Ricci flow on $X^n$ with bounded
curvature and nonnegtive bisectional curvature and $0\leq t < T$.
For any point $x\in X$ and any vector $V$ in the holomorphic
tangent space $T^{1,0}_x X$, let
$$
Z_{i \bar j}=\frac{\partial}{\partial t}R_{i\bar j}+ R_{i \bar k}R_{k\bar j}
+\nabla_{k}R_{i\bar j} V^{k}
+\nabla_{\bar k}R_{i\bar j}V^{\bar k}+R_{i\bar j k \bar \ell}V^{
k}V^{\bar\ell}+\frac {1}{t}R_{i\bar j} .
$$
Then we have
$$
Z_{i\bar j}W^{i}W^{\bar j}\geq 0
$$
for all $x\in X$, $V, W\in T^{1,0}_x X$, and $t>0$.
\end{theorem}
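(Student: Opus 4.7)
\begin{pf*}{Proof plan.}
The strategy, as anticipated by the derivation of the Li--Yau--Hamilton quadratic in Section 3.2, is to apply Hamilton's tensor maximum principle to the quantity $Z_{i\bar j}$, exploiting the fact that $Z_{i\bar j}\equiv 0$ on homothetically expanding gradient K\"ahler--Ricci solitons (see (3.9)--(3.14)). Since nonnegative holomorphic bisectional curvature is preserved under KRF by Theorem 2.2, the plan is to show that $Z_{i\bar j}W^i W^{\bar j}\ge 0$ is also preserved, provided one is willing to let the auxiliary vector $V$ (and possibly $W$) evolve pointwise. The inequality is strict at positive times because the $\tfrac{1}{t}R_{i\bar j}$ term blows up as $t\downarrow 0$, which is the mechanism ensuring the ``initial'' condition at arbitrarily small $t>0$.

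The first main step is to derive the evolution equation of $Z_{i\bar j}$ under KRF (2.1), treating $V=V(x,t)$ as an auxiliary smooth $(1,0)$-vector field. Differentiating the definition of $Z_{i\bar j}$ in time, substituting the evolution (2.9) for $R_{i\bar j}$, the evolution of the full curvature tensor from Lemma 2.6, and repeatedly applying the commutation formulas (1.15)--(1.18), one obtains, after a lengthy but mechanical computation, a parabolic equation of the schematic form
\begin{equation*}
\frac{\partial}{\partial t}Z_{i\bar j}
= \Delta Z_{i\bar j} + \nabla_k Z_{i\bar j}V^{\bar k} + \nabla_{\bar k}Z_{i\bar j}V^{k} + E_{i\bar j}(V) + \mathcal{L}_{i\bar j}(\partial_t V,\nabla V),
\end{equation*}
where $E_{i\bar j}(V)$ collects curvature quadratics (such as $R_{i\bar p k\bar q}R_{p\bar j q\bar\ell}V^k V^{\bar\ell}$ and $R_{i\bar j k\bar\ell}R_{p\bar q}\cdots$) and $\mathcal{L}_{i\bar j}(\partial_t V,\nabla V)$ gathers the terms generated by the derivatives of the test field $V$. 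The crucial point is that $\mathcal{L}_{i\bar j}$ is \emph{linear} in $\partial_t V$ and $\nabla V$, so one may choose $V$ optimally at each point to kill these terms.

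The second step is the maximum principle argument. Suppose $t_{0}\in(0,T)$ is the first time when $Z_{V\bar V W\bar W}=Z_{i\bar j}W^i W^{\bar j}$ fails to be nonnegative for some $(x_0,V_0,W_0)$; then at $(x_0,t_0)$ one has a spatial minimum in $x$ and a critical point in $V$ and $W$. The first variation in $V$ yields an algebraic identity that, when plugged back into the expression for $Z_{i\bar j}$, converts the ``bad'' cross term $\nabla_k R_{i\bar j}V^{\bar k}+\nabla_{\bar k}R_{i\bar j}V^k$ into $-R_{i\bar j k\bar\ell}V^k V^{\bar\ell}$--type contributions, precisely as happens on an expanding soliton. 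Combining this with nonnegativity of the bisectional curvature (Theorem 2.2) and the quadratic identity (3.14) derived on solitons, one shows that $\partial_t(Z_{V\bar V W\bar W})\ge 0$ at $(x_0,t_0)$, contradicting the first-time assumption. To handle the singularity at $t=0$, observe that the term $\tfrac{1}{t}R_{i\bar j}$ forces $Z_{i\bar j}\gg 0$ for small $t>0$ as long as $R_{i\bar j}\ge 0$ (which is the trace of Theorem 2.2), so the maximum principle can be started at any $t_{\ve}>0$ and $\ve$ let go to $0$.

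The main obstacle is the bookkeeping in the first step: producing and organizing the evolution equation for $Z_{i\bar j}$ so that the terms quadratic in curvature group themselves into expressions manifestly controlled by the nonnegativity of the bisectional curvature via the algebraic lemma (Corollary 2.2) established in Section 2. As in the proof of Theorem 2.2, one uses a pairing/null-vector argument: at a would-be first zero of $Z_{V\bar V W\bar W}$, the first-variation conditions in both $V$ and $W$ force enough vanishing of $Z_{i\bar j}$-related curvature contractions that the remaining quadratic terms are of the type handled by Corollary 2.2. In the complete noncompact setting with bounded curvature, the compact maximum principle must be replaced by an exhaustion argument using Shi's derivative estimates and a standard cutoff function, but the algebraic heart of the proof is the same.
\end{pf*}
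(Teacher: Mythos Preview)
Your proof plan is correct and matches the approach the paper indicates: the paper itself does not supply details here, stating only that ``the proof of Theorem 3.3 is based on Hamilton's strong tensor maximum principle and involves a large amount of calculations,'' referring to \cite{Cao92, Cao97}. Your outline---deriving the parabolic evolution of $Z_{i\bar j}$, choosing $V$ optimally to kill the $\partial_t V,\nabla V$ terms, verifying a null-vector condition at a first zero using the algebraic lemma (Corollary 2.2), and handling $t\downarrow 0$ via the $\tfrac{1}{t}R_{i\bar j}$ blow-up---is precisely the scheme of those original papers, including the noncompact extension via Shi-type arguments.
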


The proof of Theorem 3.3 is based on Hamilton's strong tensor maximum principle and involves a large amount of calculations. We refer the interested reader
to the original papers \cite{Cao92, Cao97} for details.

\begin{corollary}[{Cao \cite{Cao92, Cao97}}]
Under the assumptions of Theorem 3.3,  the scalar curvature $R$ satisfies the estimate
$$ \frac{\partial R}{\partial t} +\nabla_{i}R V^{i}+ \nabla_{\bar i}R V^{\bar i} +R_{i\bar j} V^{i} V^{\bar j} +\frac {R} {t}\ge 0 \eqno(3.15)
$$ for all $x\in X$ and $t>0$. In particular,
$$\frac{\partial R}{\partial t}-\frac{|\nabla R|^2}{R}
+\frac{R}{t}\geq 0. \eqno(3.16)$$
\end{corollary}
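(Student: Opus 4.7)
The plan is to derive both inequalities from the matrix Li-Yau-Hamilton estimate $Z_{i\bar j}W^{i}W^{\bar j}\ge 0$ of Theorem 3.3: the scalar inequality (3.15) by tracing in $W$, and the gradient estimate (3.16) by a judicious choice of $V$ in (3.15).

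Since $Z_{i\bar j}$ is a nonnegative Hermitian form in $W$, choosing $W=e_{\alpha}$ in an orthonormal frame and summing over $\alpha$ gives $g^{i\bar j}Z_{i\bar j}\ge 0$. It remains to check that this trace is precisely the left-hand side of (3.15). The only subtle contribution is the time-derivative term: from the KRF $\partial_{t}g_{i\bar j}=-R_{i\bar j}$ one gets $\partial_{t}g^{i\bar j}=R^{i\bar j}$, so
$$g^{i\bar j}\frac{\partial R_{i\bar j}}{\partial t}=\frac{\partial R}{\partial t}-R^{i\bar j}R_{i\bar j}=\frac{\partial R}{\partial t}-|Rc|^{2}.$$
The algebraic term $g^{i\bar j}R_{i\bar k}R_{k\bar j}=|Rc|^{2}$ then cancels this $-|Rc|^{2}$, while the remaining traces contribute $\nabla_{k}R\,V^{k}$, $\nabla_{\bar k}R\,V^{\bar k}$, $R_{k\bar\ell}V^{k}V^{\bar\ell}$, and $R/t$ (using $\nabla g^{-1}=0$ in K\"ahler geometry and $g^{i\bar j}R_{i\bar j k\bar\ell}=R_{k\bar\ell}$). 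Adding these reassembles exactly (3.15).

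For (3.16) I may assume $R(x,t)>0$ at the point in question, the opposite case being vacuous. I would then substitute the specific vector $V^{i}=-\nabla^{i}R/R$, where $\nabla^{i}R:=g^{i\bar k}\nabla_{\bar k}R$, into (3.15). The two linear-in-$V$ terms each contribute $-|\nabla R|^{2}/R$, and the Ricci quadratic becomes $R_{i\bar j}\nabla^{i}R\,\nabla^{\bar j}R/R^{2}$, so (3.15) specializes to
$$\frac{\partial R}{\partial t}+\frac{R}{t}\ge \frac{2|\nabla R|^{2}}{R}-\frac{R_{i\bar j}\nabla^{i}R\,\nabla^{\bar j}R}{R^{2}}.$$
By Theorem 2.2 the Ricci tensor is nonnegative along the flow, and its eigenvalues (being nonnegative and summing to $R$) are each at most $R$. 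Hence $R_{i\bar j}X^{i}X^{\bar j}\le R|X|^{2}$ for every $(1,0)$-vector $X$; applied to $X^{i}=\nabla^{i}R$ this bounds the subtracted term above by $|\nabla R|^{2}/R$, leaving exactly (3.16).

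The main obstacle is really just the careful bookkeeping in the trace step---in particular, remembering that $\partial_{t}g^{i\bar j}=R^{i\bar j}$ contributes the $|Rc|^{2}$ that exactly cancels the algebraic Ricci-square term inside $Z_{i\bar j}$. After that, the passage from (3.15) to (3.16) is a short substitution combined with the elementary eigenvalue bound $\lambda_{\max}(Rc)\le R$, which is available precisely because nonnegative bisectional curvature (hence nonnegative Ricci) is preserved by the flow.
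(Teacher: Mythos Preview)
Your proof is correct and follows essentially the same approach as the paper: trace $Z_{i\bar j}$ to obtain (3.15), then substitute $V=-\nabla\log R$ and use the elementary bound $R_{i\bar j}\le R\,g_{i\bar j}$ (valid since nonnegative bisectional curvature forces $R_{i\bar j}\ge 0$) to deduce (3.16). Your explicit bookkeeping of the $|Rc|^{2}$ cancellation in the trace step is exactly the point the paper leaves implicit.
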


\begin{pf} The first inequality (3.15) follows by taking the trace of $Z_{i\bar j}$ in Theorem 3.3. By taking $V=-\nabla \log R$ in (3.15) and
observing $R_{i\bar j}\le R g_{i\bar j}$ (because $R_{i\bar j}\ge 0$),
we obtain the second inequality  (3.16).
\end{pf}

\medskip

As a consequence of Corollary 3.1, we obtain the following Harnack
inequality for the scalar curvature $R$ by taking the Li-Yau type
path integral as in \cite{LY}.

\begin{corollary}[{Cao \cite{Cao92, Cao97}}]
Let $g_{i\bar j}(t)$ be a complete solution to the K\"ahler-Ricci flow on $X^n$ with bounded and nonnegative bisectional curvature.
Then for any points $x_{1},x_{2}\in X$, and $0<t_{1}<t_{2}$, we have
$$
R(x_1,t_1)
\leq \frac{t_2}{t_1} e^{d_{t_1} {(x_1,x_2)}^2/{4(t_2-t_1)}}R(x_2,t_2).
 $$
Here $d_{t_1} (x_1,x_2)$ denotes the distance between $x_1$ and $x_2$ with respect to $g_{i\bar j} (t_1)$.
\end{corollary}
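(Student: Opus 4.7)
The plan is to apply a Li--Yau style path integration to the trace estimate (3.15) of Corollary 3.1, mirroring the classical argument for the heat equation on manifolds with nonnegative Ricci curvature. First I would observe that $R(\cdot, t) > 0$ for every $t > 0$, so that $L := \log R$ is smooth on $X \times (0, T)$: by Theorem 2.2 the nonnegativity of bisectional curvature is preserved, whence $R \geq 0$ along the flow, and the strong maximum principle applied to $R_t = \Delta R + |Rc|^2$ (Lemma 2.5) then rules out $R(x_0, t_0) = 0$ for any $t_0 > 0$ unless $R \equiv 0$, in which case the claimed inequality is trivial.

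Next, convert (3.15) into a form suitable for integration. Divide by $R$ and use the elementary bound $R_{i\bar j} \leq R\,g_{i\bar j}$, valid because $R_{i\bar j} \geq 0$ forces each eigenvalue of $R_{i\bar j}$ relative to $g_{i\bar j}$ to be at most their sum $R$. This yields, at every point and for every $(1,0)$-vector $V$,
$$L_t + \nabla_i L\, V^i + \nabla_{\bar i} L\, V^{\bar i} + g_{i\bar j} V^i V^{\bar j} + \frac{1}{t} \geq 0.$$
Fix a smooth curve $\gamma : [t_1, t_2] \to X$ with $\gamma(t_1) = x_1$, $\gamma(t_2) = x_2$, and apply this inequality along $\gamma$ with $V(s)$ taken to be a scalar multiple of the $(1,0)$ part of $\dot\gamma(s)$. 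The scaling is determined (by completing a square in $V$) so that the gradient terms combine with $L_t$ to produce precisely $\tfrac{d}{ds} L(\gamma(s), s)$, leaving a residual quadratic of the form $\tfrac{1}{4} g_{i\bar j}(s) (\dot\gamma^{1,0})^i (\dot\gamma^{1,0})^{\bar j}$ along $\gamma$.

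Here the curvature hypothesis enters a second time, in an essential way: under KRF (2.1), nonnegative Ricci forces $\partial_s g_{i\bar j} = -R_{i\bar j} \leq 0$, so the metric is monotone non-increasing in $s$, and hence $g(s) \leq g(t_1)$ for all $s \geq t_1$. Taking $\gamma$ to be a constant-speed minimizing geodesic for $g(t_1)$, the residual quadratic is bounded uniformly in $s$ by $d_{t_1}(x_1, x_2)^2 / \bigl(4(t_2 - t_1)^2\bigr)$. Integrating the resulting ODE inequality from $s = t_1$ to $s = t_2$ and exponentiating then gives the stated inequality. The only delicate step is the algebraic optimization that pins down the sharp constant $1/4$; the three geometric ingredients used---the pointwise LYH estimate (3.15), the algebraic bound $R_{i\bar j} \leq R g_{i\bar j}$, and the metric monotonicity $g(s) \leq g(t_1)$---all follow directly from the hypothesis $R_{i\bar j} \geq 0$.
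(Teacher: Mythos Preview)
Your proposal is correct and follows essentially the same Li--Yau path integration as the paper's proof. The only cosmetic difference is that the paper first passes to the optimized trace form (3.16) (by taking $V=-\nabla\log R$ in (3.15), which already uses $R_{i\bar j}\le R g_{i\bar j}$) and then applies Young's inequality $\nabla\log R\cdot\dot\gamma\ge -|\nabla\log R|^2-\tfrac14|\dot\gamma|^2$ inside the path integral, whereas you keep $V$ free in (3.15) and set it equal to the $(1,0)$ part of $\dot\gamma$ along the path; the two optimizations are equivalent and the remaining steps (metric monotonicity from $R_{i\bar j}\ge 0$, constant-speed $g(t_1)$-geodesic, integrating $-1/s$) are identical.
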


\begin{pf}
Take the geodesic path $\gamma(\tau)$, $\tau\in[t_1,t_2],$ from
$x_1$ to $ x_2$ at time $t_1$ with constant velocity
$d_{t_1}(x_1,x_2)/(t_2-t_1).$ Consider the space-time path
$\eta(\tau)=(\gamma(\tau),\tau)$, $\tau\in[t_1,t_2]$. We compute
\begin{align*}
\log\frac{R(x_2,t_2)}{R(x_1,t_1)}
&  = \int^{t_2}_{t_1}\frac{d}{d\tau}\log R  (\gamma(\tau),\tau)d\tau\\
&  = \int^{t_2}_{t_1}\frac{1}{R}\(\frac{\partial R}{\partial \tau}
+\nabla R \cdot \frac{d\gamma}{d\tau}\)d\tau\\
&  \geq \int^{t_2}_{t_1}\(\frac{\partial \log R}{\partial\tau} -|\nabla
\log R|^2_{g (\tau)}
-\frac{1}{4}\left|\frac{d\gamma}{d\tau}\right|^2_{g (\tau)}\)d\tau.
\end{align*}
Then, by the Li-Yau estimate (3.16) for $R$ in Corollary 3.1 and the fact that the metric is shrinking
(since the Ricci curvature is nonnegative), we have
\begin{align*}
\log\frac{R(x_2,t_2)}{R(x_1,t_1)} &
\geq\int^{t_2}_{t_1}\(-\frac{1}{\tau}-
\frac{1}{4}\left|\frac{d\gamma}{d\tau}\right|^2_{g (t_1)}\)d\tau \\
&  = \log\frac{t_1}{t_2}-\frac{d_{t_1}(x_1,x_2)^2}{4(t_2-t_1)}.
\end{align*}
Now the desired Harnack inequality follows by exponentiating.
\end{pf}

Finally,  we can convert Corollary 3.1 and Corollary 3.2 to the NKRF case and yield the following Li-Yau type estimate and Harnack's inequality.

\begin{theorem} [{Cao \cite{Cao92}}]Let $g_{i\bar j} (t)$ be a solution to NKRF on $X^n \times [0, \infty)$ with nonnegative bisectional curvature. Then,
the scalar curvature $R$ satisfies

\begin{itemize}
\item[(a)]  the Li-Yau type estimate: for any $t>0$ and $x\in X$,
$$
\frac{\partial R}{\partial t}-\frac{|\nabla R|^2}{R}
+\frac{R}{1-e^{-t}}\geq 0; \eqno (3.17)$$

\item[(b)]  the Harnack inequality: for any $0<t_1<t_2$ and any $x, y\in X$,
$$ R(x,t_1)
\leq \frac{e^{t_2}-1}{e^{t_1}-1}  \exp \{ e^{t_2-t_1} \frac {d^2_{t_1} {(x, y)}} {4(t_2-t_1)} \}
R(y,t_2),
\eqno (3.18)
$$
\end{itemize}
\end{theorem}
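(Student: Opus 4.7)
The strategy is to obtain both (a) and (b) as direct consequences of Corollary 3.1 and Corollary 3.2 for the unnormalized KRF, combined with the explicit change of variables in Lemma 2.3 and Corollary 2.1. The work will be entirely bookkeeping: converting time derivatives, gradients, distances, and scalar curvatures between the two flows, then verifying that the resulting inequalities have the stated form.

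For part (a), I would set $s=1-e^{-t}$ and $\hat g(s)=(1-s)g(t)$, so that $\hat R(s)=(1-s)^{-1}R(t)$ by Corollary 2.1, equivalently $R(t)=e^{-t}\hat R(s)$. First, since $ds/dt=1-s$, the chain rule gives
\begin{equation*}
\frac{\partial R}{\partial t}=-(1-s)\hat R+(1-s)^{2}\frac{\partial \hat R}{\partial s}.
\end{equation*}
Second, because $g^{i\bar j}(t)=(1-s)^{-1}\hat g^{i\bar j}(s)$ and $R=(1-s)\hat R$, I get $|\nabla R|^{2}_{g}/R=(1-s)^{2}|\hat\nabla\hat R|^{2}_{\hat g}/\hat R$. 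Third, $R/(1-e^{-t})=R/s=(1-s)\hat R/s$. Collecting these three terms, the quantity appearing in (3.17) factors as
\begin{equation*}
\frac{\partial R}{\partial t}-\frac{|\nabla R|^{2}}{R}+\frac{R}{1-e^{-t}}=(1-s)^{2}\left(\frac{\partial\hat R}{\partial s}-\frac{|\hat\nabla\hat R|^{2}}{\hat R}+\frac{\hat R}{s}\right),
\end{equation*}
and the right-hand side is nonnegative by the Li-Yau estimate (3.16) of Corollary 3.1 applied to the KRF solution $\hat g(s)$, which has nonnegative bisectional curvature by Mok's theorem (Theorem 2.2).

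For part (b), I use the same correspondence. Since $\hat g(s)=(1-s)g(t)$, distances scale by $\sqrt{1-s}=e^{-t/2}$, so $\hat d_{s_1}(x,y)^{2}=e^{-t_1}d_{t_1}(x,y)^{2}$; also $s_{2}-s_{1}=e^{-t_1}-e^{-t_2}=e^{-t_1}(1-e^{-(t_2-t_1)})$. Applying Corollary 3.2 to $\hat g$ and then converting via $\hat R(s_{i})=e^{t_i}R(t_{i})$ gives
\begin{equation*}
R(x,t_{1})\leq \frac{s_{2}}{s_{1}}e^{t_{2}-t_{1}}\exp\!\left\{\frac{e^{t_{2}-t_{1}}\,d_{t_1}(x,y)^{2}}{4(e^{t_{2}-t_{1}}-1)}\right\}R(y,t_{2}).
\end{equation*}
A short algebraic check shows that $\frac{s_2}{s_1}e^{t_2-t_1}=\frac{e^{t_2}-1}{e^{t_1}-1}$, which is exactly the prefactor in (3.18). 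Finally, the elementary inequality $e^{t_2-t_1}-1\ge t_2-t_1$ shows that the exponent above is bounded by $e^{t_2-t_1}d_{t_1}(x,y)^{2}/\bigl(4(t_2-t_1)\bigr)$, which yields (3.18).

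There is really no conceptual obstacle here; all the analytic content is packaged in Corollaries 3.1 and 3.2. The only delicate point is keeping track of the three simultaneous scalings (time reparametrization $t\mapsto s$, conformal rescaling of the metric by $(1-s)$, and the induced rescaling of gradients and distances) so that the signs and powers of $(1-s)$ line up correctly; experience suggests the most error-prone step is the distance transformation in part (b), since one must remember that lengths scale as the square root of the conformal factor.
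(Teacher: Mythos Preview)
Your proof is correct. Part (a) is exactly the paper's approach: convert the KRF Li--Yau estimate (3.16) via the change of variables of Lemma 2.3 and Corollary 2.1. (One slip: from $\hat g_{i\bar j}=(1-s)g_{i\bar j}$ one gets $g^{i\bar j}=(1-s)\hat g^{i\bar j}$, not $(1-s)^{-1}\hat g^{i\bar j}$; your final identity $|\nabla R|^2/R=(1-s)^2|\hat\nabla\hat R|^2/\hat R$ is nonetheless correct.)

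For part (b) you take a genuinely different route. The paper does \emph{not} convert Corollary 3.2; instead it reruns the Li--Yau path-integration argument directly in NKRF time using the already-established estimate (3.17), obtaining
\[
\log\frac{R(y,t_2)}{R(x,t_1)}\ge\log\frac{e^{t_1}-1}{e^{t_2}-1}-\frac14\inf_\gamma\int_{t_1}^{t_2}|\gamma'(\tau)|^2_{g(\tau)}\,d\tau,
\]
and then invokes the metric growth bound $g(t_2)\le e^{t_2-t_1}g(t_1)$ (from $Rc\ge 0$ under NKRF) to control the path integral. Your approach instead pushes Corollary 3.2 through the rescaling; the nonnegative Ricci assumption enters implicitly, since it was already used in the proof of Corollary 3.2 to compare $|\gamma'|_{\hat g(\tau)}$ with $|\gamma'|_{\hat g(s_1)}$. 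Your method is slightly more economical and even yields the sharper exponent $e^{t_2-t_1}d_{t_1}^2/\bigl(4(e^{t_2-t_1}-1)\bigr)$ before you weaken it with $e^{t_2-t_1}-1\ge t_2-t_1$; the paper's method has the pedagogical virtue of showing the Harnack argument intrinsically in the normalized flow.
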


\begin{pf} Part (a):
Let $\hat g_{i\bar j} (s)$ be the associated solution to KRF on $X\times [0,1)$. By Lemma 2.3, Corollary 2.1 and Corollary 3.1, we have
$$ R = (1-s) \hat R,  \qquad1-e^{-t}=s, $$ and
$$\frac{\partial \hat R}{\partial s}-\frac{|\nabla \hat R|_{\hat g}^2}{\hat R}
+\frac{\hat R}{s}\geq 0.$$ It is then easy to check that they are translated into (3.17).

Part (b): By the Li-Yau path integration
argument as in the proof of Corollary 3.2 but use (3.17) instead,  we get
\begin{align*}
\log\frac{R(y, t_2)}{R(x, t_1)} &
\geq\int^{t_2}_{t_1}\(-\frac{1}{1-e^{-\tau}}-
\frac{1}{4}\left|\frac{d\gamma}{d\tau}\right|^2_{g (\tau)}\)d\tau \\
&  =  \log\frac{e^{t_1}-1}{e^{t_2}-1}-\frac {1} {4} \Delta (x, t_1; y, t_2).
\end{align*}
where $$\Delta (x, t_1; y, t_2)=\inf _{\gamma} \int ^{t_2}_{t_1} |\gamma'(\tau)|^2_{g(\tau)} d\tau.  \eqno (3.19)$$
But, the NKRF equation and the assumption of $Rc_{g}\ge 0$ imply that, for $t_1<t_2$, $$ g(t_2) \le e^{t_2-t_1} g(t_1).$$ Hence,
$$ \Delta (x, t_1; y, t_2) \le e^{t_2-t_1} \frac {d^2_{t_1} {(x, y)}} {(t_2-t_1)}.$$
Therefore,
$$ \log\frac{R(y, t_2)}{R(x, t_1)} \ge \log\frac{e^{t_1}-1}{e^{t_2}-1}  -  e^{t_2-t_1} \frac {d^2_{t_1} {(x, y)}} {4(t_2-t_1)}. $$
\end{pf}

\section{Perelman's $\mu$-entropy and $\kappa$-noncollapsing theorems}

In this section, we review Perelman's $\mathcal{W}$-functional and the associated $\mu$-entropy. We show that  the $\mu$-entropy is monotone under the Ricci flow and use this
important fact to prove a strong $\kappa$-noncollapsing theorem for the Ricci flow on compact Riemannian manifolds.  These results and the ideas in the proof play a crucial
role in the next two sections when we discuss the uniform estimates on the diameter and the scalar curvature of the NKFR.

\medskip
\noindent {\bf 4.1 Perelman's $\mathcal{W}$-functional and $\mu$-entropy for the Ricci flow}
\medskip

Let $M$ be a compact $n$-dimensional manifold. Consider the following functional, due to Perelman \cite{P1}, 
$$\mathcal{W}(g_{ij},f,\tau) =\int_M [\tau (R+|\nabla
f|^2)+f-n](4\pi
\tau)^{-\frac{n}{2}}e^{-f}dV \eqno(4.1)$$
under the constraint $$(4\pi\tau)^{-\frac{n}{2}}\int_M e^{-f}dV=1. \eqno(4.2)$$
Here $g_{ij}$ is any given Riemannian metric, $f$ is any smooth
function on $M$, and $\tau$ is a positive scale parameter. Clearly
the functional $\mathcal{W}$ is invariant under simultaneous
scaling of $\tau$ and $g_{ij}$ (or equivalently the parabolic
scaling), and invariant under diffeomorphism. Namely, for any
positive number $a>0$ and any diffeomorphism $\varphi\in {\rm Diff }(M^n)$, 
$$\mathcal{W}(\varphi^*g_{ij}, \varphi^*f,
\tau)=\mathcal{W}(g_{ij},f,\tau) \quad \mbox{and} \quad \mathcal{W}(a g_{ij}, f, a
\tau)=\mathcal{W}(g_{ij},f,\tau).\eqno (4.3)$$

In \cite{P1} Perelman derived the following first variation formula (see also \cite{CaoZhu06})

\begin{lemma}[{Perelman \cite{P1}}]
If $v_{ij}=\delta g_{ij},\; h=\delta f,\;\mbox{ and }\;
\eta=\delta\tau$, then
\begin{align*}
& \delta \mathcal{W}(v_{ij},h,\eta)\\
&  = \int_M-\tau v_{ij}\(R_{ij}+\nabla_i\nabla_jf
-\frac{1}{2\tau}g_{ij}\)(4\pi\tau)^{-\frac{n}{2}}e^{-f}dV\\
&\quad +\int_M\(\frac{v}{2}-h-\frac{n}{2\tau}\eta\)[\tau(R+2\Delta f
-|\nabla f|^2)+f-n-1](4\pi\tau)^{-\frac{n}{2}}e^{-f}dV\\
&\quad +\int_M \eta\(R+|\nabla f|^2
-\frac{n}{2\tau}\)(4\pi\tau)^{-\frac{n}{2}}e^{-f}dV.
\end{align*}
Here $v=g^{ij}v_{ij}$.
\end{lemma}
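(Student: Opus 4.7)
The plan is to compute $\delta\mathcal{W}$ by varying each factor in the integrand directly, then to integrate by parts against the weighted measure $u\,dV$ with $u=(4\pi\tau)^{-n/2}e^{-f}$ so as to move all derivatives off of $v_{ij}$ and $h$. First, I would record the elementary building blocks: $\delta(dV) = \tfrac{v}{2}\,dV$ with $v = g^{ij}v_{ij}$; $\delta(e^{-f}) = -h\,e^{-f}$; $\delta((4\pi\tau)^{-n/2}) = -\tfrac{n}{2\tau}\eta\,(4\pi\tau)^{-n/2}$; the standard scalar-curvature formula $\delta R = -\Delta v + \nabla^i\nabla^j v_{ij} - R^{ij}v_{ij}$; and $\delta|\nabla f|^2 = -v^{ij}\nabla_i f\,\nabla_j f + 2g^{ij}\nabla_i f\,\nabla_j h$. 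Distributing $\delta$ across $\mathcal{W}$ produces an expression linear in $(v_{ij},h,\eta)$ whose only non-algebraic terms involve the Laplacian and double divergence of $v$ together with the gradient of $h$.

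Next I would integrate by parts against $u\,dV$ using the three key identities
\begin{equation*}
\nabla_i u = -u\,\nabla_i f,\qquad \nabla_i\nabla_j u = u\bigl(\nabla_i f\,\nabla_j f - \nabla_i\nabla_j f\bigr),\qquad \Delta u = u\bigl(|\nabla f|^2 - \Delta f\bigr),
\end{equation*}
which follow from the product rule applied to $e^{-f}$. The term $\tau\int(-\Delta v)u\,dV$ becomes $\tau\int v(\Delta f - |\nabla f|^2)u\,dV$, the term $\tau\int\nabla^i\nabla^j v_{ij}\,u\,dV$ becomes $\tau\int v_{ij}(\nabla^i f\,\nabla^j f - \nabla^i\nabla^j f)u\,dV$, and the term $2\tau\int g^{ij}\nabla_i f\,\nabla_j h\,u\,dV$ becomes $2\tau\int h(|\nabla f|^2 - \Delta f)u\,dV$ after pushing $\nabla_j$ onto $\nabla_i f \cdot u$ and using $\nabla^i u = -u\nabla^i f$.

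Collecting contributions proportional to $v_{ij}$, the two $\pm\tau\nabla^i f\,\nabla^j f$ cross terms (one from $\delta|\nabla f|^2$, one from the integration by parts on $\nabla^i\nabla^j v_{ij}$) cancel, leaving $-\tau(R_{ij} + \nabla_i\nabla_j f)v^{ij}$ together with a trace piece $\tfrac{1}{2} g^{ij}\bigl[\tau(R+2\Delta f - |\nabla f|^2) + f - n\bigr]v_{ij}$ once the $\tfrac{v}{2}$ contribution from $\delta(dV)$ is added. Writing $\tfrac{1}{2} = \tau\cdot\tfrac{1}{2\tau}$ allows this trace piece to be split as $+\tau\cdot\tfrac{1}{2\tau}g^{ij}v_{ij}$ (absorbed into the Bakry–Émery combination $R_{ij}+\nabla_i\nabla_j f - \tfrac{1}{2\tau}g_{ij}$) plus $\tfrac{v}{2}\bigl[\tau(R+2\Delta f - |\nabla f|^2) + f - n - 1\bigr]$, which is the desired form.

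Finally, the $h$ and $\eta$ coefficients fall out cleanly: the direct $h$ contributions (from $\delta f$ and from $-h$ in $\delta(e^{-f})$) together with the $2\tau h(|\nabla f|^2 - \Delta f)$ piece from integration by parts assemble into $-h\bigl[\tau(R+2\Delta f - |\nabla f|^2) + f - n - 1\bigr]$, while the $\eta$ contributions split naturally into the scalar $\eta(R+|\nabla f|^2 - \tfrac{n}{2\tau})$ piece and an $-\tfrac{n}{2\tau}\eta$ multiplying the same bracketed factor. The main obstacle is purely organizational, namely keeping track of the several cancellations and recognizing the regrouping that pulls $\tfrac{1}{2\tau}g_{ij}$ into the Bakry–Émery tensor; once that regrouping is anticipated, every term finds its place.
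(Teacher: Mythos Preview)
Your computation is correct and is the standard direct proof of this variation formula: vary each factor in the integrand, invoke the well-known formula $\delta R = -\Delta v + \nabla^i\nabla^j v_{ij} - R^{ij}v_{ij}$, integrate by parts against $u=(4\pi\tau)^{-n/2}e^{-f}$ using $\nabla_i u = -u\,\nabla_i f$, and regroup so as to expose the Bakry--\'Emery tensor $R_{ij}+\nabla_i\nabla_j f - \tfrac{1}{2\tau}g_{ij}$. The cancellation you identify between the $\tau v^{ij}\nabla_i f\,\nabla_j f$ term coming from $\delta|\nabla f|^2$ and the one coming from the double integration by parts on $\nabla^i\nabla^j v_{ij}$ is indeed the key simplification, and your bookkeeping of the $h$- and $\eta$-terms is accurate (the $\eta$-terms match the stated formula after using $\int_M(|\nabla f|^2-\Delta f)u\,dV=\int_M\Delta u\,dV=0$).

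Note, however, that the paper does \emph{not} actually supply its own proof of this lemma: it simply records the formula and cites Perelman \cite{P1} and Cao--Zhu \cite{CaoZhu06} for the derivation. So there is nothing to compare your argument against here; your write-up is exactly the computation one finds spelled out in those references.
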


By Lemma 4.1 and direct computations (cf. \cite{P1, CaoZhu06}), one obtains

\begin{lemma}[{Perelman \cite{P1}}]
If $g_{ij}(t), f(t)$ and $\tau(t)$ evolve according to the system
$$
      \left\{
       \begin{array}{lll}
       \displaystyle
\frac{\partial g_{ij}}{\partial t}=-2R_{ij},
          \\[4mm]
      \displaystyle
\frac{\partial f}{\partial t}=-\Delta f+|\nabla
f|^2-R+\frac{n}{2\tau},
          \\[4mm]
      \displaystyle
\frac{\partial \tau}{\partial t}=-1,
       \end{array}
      \right.
$$
then
$$
\frac{d} {d t} \mathcal{W}(g_{ij}(t),f(t),\tau(t))=\int_M
2\tau\left|R_{ij}+\nabla_i\nabla_jf-\frac
{1}{2\tau}g_{ij}\right|^2 (4\pi\tau)^{-\frac{n}{2}} e^{-f}dV,
$$
and $\int_M(4\pi\tau)^{-\frac{n}{2}}e^{-f}dV$ is constant. In
particular $\mathcal{W}(g_{ij}(t),f(t),\tau(t))$ is nondecreasing
in time and the monotonicity is strict unless we are on a
shrinking gradient soliton.
\end{lemma}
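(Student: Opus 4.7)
The plan is to apply the first variation formula in Lemma~4.1 with the specific choices dictated by the coupled system, namely $v_{ij} = -2R_{ij}$, $h = -\Delta f + |\nabla f|^2 - R + n/(2\tau)$, and $\eta = -1$, and then reorganize the three resulting integrals into a single perfect square by using integration by parts along with the contracted second Bianchi identity and the Bochner formula.

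I would first verify the auxiliary claim that $\int_M (4\pi\tau)^{-n/2} e^{-f}\, dV$ is constant in time. Setting $\Phi = (4\pi\tau)^{-n/2} e^{-f}$ and using $d(dV)/dt = -R\,dV$ together with the above evolution equations, a direct computation yields
$$\frac{d}{dt}(\Phi\, dV) = \Bigl(\Delta f - |\nabla f|^2\Bigr)\Phi\, dV = -(4\pi\tau)^{-n/2}\Delta(e^{-f})\, dV,$$
which integrates to zero on the compact manifold $M$ by the divergence theorem. This simultaneously settles the constancy claim and shows that the pointwise coefficient $v/2 - h - n\eta/(2\tau) = \Delta f - |\nabla f|^2$ appearing in the second term of Lemma~4.1 is a total divergence against the weight $\Phi$, even though it does not vanish pointwise.

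Next I would plug these choices into Lemma~4.1. The first term immediately contributes the Ricci half of the square,
$$2\tau\int_M R_{ij}\Bigl(R_{ij}+\nabla_i\nabla_j f - \tfrac{1}{2\tau}g_{ij}\Bigr)\Phi\, dV,$$
which accounts for exactly half of the cross terms in the expansion of $2\tau|R_{ij}+\nabla_i\nabla_j f - g_{ij}/(2\tau)|^2$. The Hessian half, containing $2\tau|\nabla^2 f|^2$ and $2\tau R_{ij}\nabla_i\nabla_j f$, must come from the remaining two terms. Using $(\Delta f - |\nabla f|^2) e^{-f} = -\Delta(e^{-f})$, the second term in Lemma~4.1 can be rewritten via two applications of integration by parts as $-\int_M \Phi\,\Delta\bigl[\tau(R+2\Delta f-|\nabla f|^2)+f-n-1\bigr]\, dV$. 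Expanding this Laplacian with the Bochner identity $\Delta|\nabla f|^2 = 2|\nabla^2 f|^2 + 2\nabla f\cdot\nabla\Delta f + 2R_{ij}\nabla_i f\nabla_j f$, together with the twice-contracted Bianchi identity $\nabla^i R_{ij} = \tfrac{1}{2}\nabla_j R$ and the weighted integration by parts identity $\int (\Delta\phi)\, e^{-f}\, dV = \int \phi\,(|\nabla f|^2 - \Delta f)\, e^{-f}\, dV$, should produce precisely the missing Hessian terms, while the third term supplies the required $-2R$ and $n/(2\tau)$ pieces.

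The main obstacle is the sheer bookkeeping in this last step: the expansion of the target square contains six types of terms, and each of the three contributions from Lemma~4.1 generates several pieces under integration by parts that must be matched term by term. Once this identification is carried out, the monotonicity assertion follows immediately from the pointwise nonnegativity of the integrand. For the equality case, the squared integrand vanishes identically if and only if $R_{ij}+\nabla_i\nabla_j f = g_{ij}/(2\tau)$ everywhere on $M$, which is exactly the defining equation of a shrinking gradient Ricci soliton with soliton parameter $\tau$, completing the proof.
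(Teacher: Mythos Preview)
Your proposal is correct and follows the same approach the paper indicates: the paper simply states that the result follows ``by Lemma 4.1 and direct computations'' with a citation, and your outline carries out exactly those computations, verifying the constraint is preserved and then substituting $v_{ij}=-2R_{ij}$, $h=f_t$, $\eta=-1$ into the first variation formula and reorganizing via Bochner, Bianchi, and integration by parts. There is nothing methodologically different here; you have supplied the details the paper omits.
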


Now we define
$$\mu(g_{ij},\tau)=\inf\left\{\mathcal{W}(g_{ij},f,\tau)\ |\  f\in
C^\infty(M),
\frac{1}{(4\pi\tau)^{n/2}}\int_M e^{-f}dV=1\right\}.  \eqno(4.4) $$

Note that if we set $u=e^{-f/2}$, then the functional
$\mathcal{W}$ can be expressed as
$$
\mathcal{W}=\mathcal{W}(g_{ij},u,\tau) = (4\pi\tau)^{-\frac{n}{2}}\int_M [\tau(Ru^2+4|\nabla
u|^2)-u^2\log u^2-nu^2] dV \eqno(4.5)
$$
and the constraint (4.2)
becomes $$(4\pi\tau)^{-\frac{n}{2}}\int_M u^2dV=1. \eqno(4.6)$$
Thus $\mu(g_{ij},\tau)$ corresponds to the best constant of a
logarithmic Sobolev inequality. Since the non-quadratic term is
subcritical (in view of Sobolev exponent), it is rather
straightforward to show that
$$
\inf\!\bigg\{\! (4\pi\tau)^{-\frac{n}{2}}\!\int_M\! [\tau(4|\nabla u|^2\!+Ru^2) -u^2\log
u^2\!-nu^2] dV:  (4\pi\tau)^{-\frac{n}{2}}
\int_Mu^2dV\!=\!1\!\bigg\}
$$
is achieved by some nonnegative function $u\in H^1(M)$ which
satisfies the Euler-Lagrange equation
$$\tau (-4\Delta u+Ru)-2u\log u-nu=\mu(g_{ij},\tau)u
.$$ One can further show that $u$ is positive (see \cite{Ro}).
Then the standard regularity theory of elliptic PDEs shows that
$u$ is smooth. We refer the reader to Rothaus \cite{Ro} for more
details. It follows that $\mu(g_{ij},\tau)$ is achieved by a
minimizer $f$ satisfying the nonlinear equation 
$$\tau (2\Delta f-|\nabla f|^2+R)+f-n=\mu(g_{ij},\tau). \eqno(4.7) $$

It turns out that the $\mu$-entropy has the following important monotonicity property under the Ricci flow:
\begin{proposition} [{Perelman \cite{P1}}]\  Let $g_{ij}(t)$ be a solution to the Ricci flow
$$\frac{\partial g_{ij}}{\partial t}=-2R_{ij}$$
on $M^n\times [0,T)$ with $0<T<\infty$, then $\mu(g_{ij}(t),T_{0}-t)$
is nondecreasing along the Ricci flow for any $T_{0}\ge T$; moveover, the monotonicity is strict unless we are on
a shrinking gradient soliton.
\end{proposition}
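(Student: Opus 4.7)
The plan is to compare the entropy at two times by using the minimizer at the later time as terminal data for a coupled backward evolution and then invoking Lemma 4.2. Fix $0 \le t_1 < t_2 < T$ and set $\tau(t) = T_0 - t$, so $\tau$ satisfies $\partial_t \tau = -1$ and is strictly positive on $[0, T)$. By the remarks preceding Proposition 4.1 (Rothaus's argument), there exists a smooth minimizer $f_2$ realizing $\mu(g_{ij}(t_2), \tau(t_2))$, satisfying the Euler-Lagrange equation (4.7) and the constraint (4.2).

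Next, I would construct $f(t)$ on $[t_1, t_2]$ by solving the evolution equation
\[
\frac{\partial f}{\partial t} = -\Delta f + |\nabla f|^2 - R + \frac{n}{2\tau}
\]
\emph{backward} in time with terminal condition $f(t_2) = f_2$. The point is that if we substitute $u = (4\pi\tau)^{-n/2} e^{-f}$, then a direct computation (using $\partial_t g_{ij} = -2R_{ij}$ and $\partial_t \tau = -1$) shows that $u$ satisfies the conjugate heat equation $\partial_t u = -\Delta u + Ru$, which is well-posed backward in $t$. This gives a smooth $f$ on $[t_1, t_2]$, and since $\int u\, dV_{g(t)}$ is preserved under the conjugate heat equation along the Ricci flow, the constraint (4.2) holds at every $t \in [t_1, t_2]$.

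Thus the triple $(g_{ij}(t), f(t), \tau(t))$ satisfies exactly the coupled system of Lemma 4.2 on $[t_1, t_2]$, so $\mathcal{W}(g_{ij}(t), f(t), \tau(t))$ is nondecreasing in $t$. Combining this with the definition (4.4) of $\mu$ and the fact that $f(t_1)$ is an admissible competitor at time $t_1$, we obtain the chain
\[
\mu(g_{ij}(t_1), \tau(t_1)) \le \mathcal{W}(g_{ij}(t_1), f(t_1), \tau(t_1)) \le \mathcal{W}(g_{ij}(t_2), f_2, \tau(t_2)) = \mu(g_{ij}(t_2), \tau(t_2)),
\]
proving the monotonicity.

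For the rigidity statement, suppose $\mu(g_{ij}(t_1), \tau(t_1)) = \mu(g_{ij}(t_2), \tau(t_2))$. Then the middle inequality above is an equality, so by the equality case of Lemma 4.2 the evolving triple must satisfy $R_{ij} + \nabla_i \nabla_j f - \frac{1}{2\tau} g_{ij} \equiv 0$ throughout $[t_1, t_2]$, i.e.\ $g_{ij}(t)$ is a shrinking gradient soliton. The main technical point to verify carefully is the solvability and smoothness of the backward conjugate heat equation with the minimizer as terminal data; since the minimizer is smooth and positive and the background metric is smooth, standard backward parabolic theory (equivalently forward parabolicity after the time reversal $s = T_0 - t$) handles this without essential difficulty, so the argument reduces to a clean application of Lemma 4.2.
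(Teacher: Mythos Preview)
Your proof is correct and follows essentially the same approach as the paper: pick a minimizer at the later time, run it backward via the conjugate heat equation (equivalently the nonlinear backward equation for $f$), and apply Lemma 4.2 together with the definition of $\mu$ to obtain the chain of inequalities; the rigidity follows from the equality case of Lemma 4.2. Your write-up is in fact slightly more explicit than the paper's about the substitution $u=(4\pi\tau)^{-n/2}e^{-f}$ and the preservation of the constraint, but the argument is the same.
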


\begin{pf}
Fix any time $t_0$, let $f_0$ be a minimizer of
$\mu(g_{ij}(t_0),T_{0}-t_0).$ Note that the backward heat equation
$$
\frac{\partial f}{\partial t}=-\Delta f+|\nabla
f|^2-R+\frac{n}{2\tau}
$$
is equivalent to the linear equation
$$
\frac{\partial }{\partial
t}((4\pi\tau)^{-\frac{n}{2}}e^{-f})=-\Delta
((4\pi\tau)^{-\frac{n}{2}}e^{-f})+R((4\pi\tau)^{-\frac{n}{2}}e^{-f}).
$$
Thus we can solve the backward heat equation of $f$ with
$f|_{t=t_0}=f_0$ to obtain $f(t)$ for $t\in [0, t_0]$, satisfying constraint (4.2). Then, for $t\leq t_0$, it follows
from Lemma 4.2  that
\begin{align*}
\mu(g_{ij}(t),T_{0}-t)
& \leq \mathcal{W}(g_{ij}(t),f(t),T_{0}-t)\\
& \leq \mathcal{W}(g_{ij}(t_0),f(t_0),T_{0}-t_0)\\
& = \mu(g_{ij}(t_0),T_{0}-t_0),
\end{align*}
and the second inequality is strict unless we are
on a shrinking gradient soliton.
\end{pf}

\medskip

\noindent {\bf 4.2 Strong $\kappa$-noncollapsing of the Ricci flow}

\medskip
We now apply the monotonicity of the $\mu$-entropy in Proposition 4.1 to prove  a strong
version of Perelman's  {\bf no local collapsing theorem}, which is extremely important because it gives a local injectivity
radius estimate in terms of the local curvature bound.

\begin{definition}
Let $g_{ij}(t), 0 \leq t <T,$ be a solution to the Ricci flow on an $n$-dimensional
manifold $M$, and let $\kappa$, $r$ be two positive constants. We say that the solution $g_{ij}(t)$
is \textbf{$\kappa$-noncollapsed} at $(x_0,t_0)\in M \times [0,T)$ on
the scale $r$ \index{$\kappa$-noncollapsed} if we have
$${\rm V}_{t_0}(x_0,r)) \geq \kappa r^n,$$
whenever
$$
|Rm|(x, t_0) \leq r^{-2}
$$
for all $x \in B_{t_0}(x_0,r)$. Here $B_{t_0}(x_0,r)$ is
the geodesic ball centered at $x_0\in M $ and of radius $r$ with
respect to the metric $g_{ij}(t_0)$.
\end{definition}

\begin{remark} In \cite{P1}, Perelman also defined $\kappa$-noncollapsing by requiring the curvature bound
$|Rm|(x, t) \leq r^{-2}$
on the (backward) parabolic cylinder $B_{t_0}(x_0,r)\times [t_0-r^2, t_0]$.

\end{remark}

The following result was proved in \cite{CaoZhu06} (cf. Theorem 3.3.3 in \cite{CaoZhu06})).

\begin{theorem}[Strong no local collapsing theorem]
Let $M$ be a compact Riemannian manifold, and let $g_{ij}(t)$
be a solution to the Ricci flow on $M^n\times [0,T)$ with $0<T<+\infty$. Then there
exists a positive constant $\kappa$, depending only the initial
metric $g_0$ and  $T$, such that $g_{ij}(t)$ is $\kappa$-noncollapsed at very point $(x_0,t_0)\in M \times [0,T)$
on all scales less than $\sqrt{T}$. In fact, for any $(x_0,t_0)\in M \times [0,T)$ and $0<r\le \sqrt{T}$ we have
$${\rm V}_{t_0}(x_0,r) \geq \kappa r^n,$$ whenever
$$
R(\cdot, t_0) \leq r^{-2}  \quad \mbox{on} \ B_{t_0}(x_0, r).
$$
\end{theorem}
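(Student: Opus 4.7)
\medskip

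The strategy is to apply the $\mu$-entropy monotonicity of Proposition 4.1 with $T_0 := t_0 + r^2$, using a carefully localized test function at $(x_0, t_0)$. Since $t_0 < T$ and $r \le \sqrt{T}$, we have $T_0 \in (0, 2T]$, and Proposition 4.1 yields
\[
\mu(g_0, T_0) \;\le\; \mu(g(t_0),\, T_0 - t_0) \;=\; \mu(g(t_0), r^2).
\]
The plan is then to bound the left side from below by a constant depending only on $g_0$ and $T$, and to bound the right side from above by roughly $\log\!\bigl(V_{t_0}(x_0,r)/r^n\bigr) + C_n$ via a concrete test function. Comparing the two bounds yields the desired volume lower bound, with $\kappa$ depending only on $g_0$, $T$, and $n$.

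For the lower bound on $\mu(g_0, T_0)$, I would use that the initial metric $g_0$ is smooth on the compact manifold $M$, so $\tau \mapsto \mu(g_0, \tau)$ is continuous on $(0, \infty)$ by standard log-Sobolev theory (see (4.5)--(4.7)), and tends to $0$ as $\tau \to 0^+$ (locally the geometry approaches the Euclidean logarithmic Sobolev inequality). Consequently $\mu(g_0, \tau) \ge -A$ on $(0, 2T]$ for some constant $A = A(g_0, T)$.

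For the upper bound, pick a smooth cutoff $\phi \colon [0,\infty) \to [0,1]$ with $\phi \equiv 1$ on $[0, 1/2]$, $\phi \equiv 0$ on $[1, \infty)$, and $|\phi'| \le 4$, and set
\[
u(x) \;:=\; a\, \phi\!\left(\tfrac{d_{t_0}(x, x_0)}{r}\right),
\]
where $a > 0$ is chosen so that $(4\pi r^2)^{-n/2}\!\int_M u^2\, dV = 1$, i.e.\ $a^2 = (4\pi r^2)^{n/2} / \int \phi^2(d/r)\, dV$. Using formulation (4.5) of $\mathcal{W}$ at $\tau = r^2$, I would estimate the three pieces: the scalar curvature term contributes at most $1$ because $r^2 R \le 1$ on $\mathrm{supp}\, u \subset B_{t_0}(x_0, r)$; the entropy piece $-(4\pi r^2)^{-n/2}\!\int u^2 \log u^2\, dV$ evaluates to $-\log a^2 + O(1) = \log\!\bigl(\int\phi^2 dV / (4\pi r^2)^{n/2}\bigr) + O(1) \le \log\bigl(V_{t_0}(x_0,r)/r^n\bigr) + C_n$; the gradient piece is supported in the annulus $r/2 \le d \le r$ and, modulo the obstacle discussed below, gives a bounded contribution. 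Assembling these bounds gives
\[
\mathcal{W}(g(t_0), u, r^2) \;\le\; \log\!\bigl(V_{t_0}(x_0, r)/r^n\bigr) + C_n,
\]
so $V_{t_0}(x_0, r) \ge e^{-A - C_n} r^n$, yielding $\kappa = \kappa(g_0, T, n) > 0$.

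The main obstacle is controlling the gradient term $(4\pi r^2)^{-n/2}\!\int 4r^2 |\nabla u|^2 dV = 4|\phi'|_\infty^2 \cdot V_{t_0}(x_0, r)/\!\int\!\phi^2(d/r)\, dV$ without access to a Ricci bound (we have only a scalar curvature bound). Naively this ratio involves $V_{t_0}(x_0,r)/V_{t_0}(x_0, r/2)$, which is not geometrically controlled. The way around this is a dichotomy: either $V_{t_0}(x_0, r/2) \ge \tfrac{1}{2} V_{t_0}(x_0, r)$, in which case the ratio is bounded by $2$ and the estimate above goes through cleanly; or $V_{t_0}(x_0, r/2) < \tfrac{1}{2} V_{t_0}(x_0, r)$, in which case it suffices to rerun the argument at the smaller scale $r' = r/2$, at which the hypothesis $R \le (r')^{-2}$ still holds up to a fixed factor (absorbed into $\kappa$). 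Iterating finitely many times (using finiteness of $T$ to terminate the iteration) produces a radius at which the volume doubling is controlled and the test-function estimate closes. This is the step that requires the most careful bookkeeping; everything else is a direct application of Proposition 4.1 and elementary estimates on $\mathcal{W}$.
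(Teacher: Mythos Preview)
Your overall strategy matches the paper's: use the monotonicity of $\mu$ from Proposition 4.1 with $\tau = r^2$, bound $\mu(g_0,\cdot)$ below on $(0,2T]$, plug in a radial cutoff test function, and reduce everything to controlling the volume ratio $V_{t_0}(x_0,r)/V_{t_0}(x_0,r/2)$. That part is fine. The gap is in how you handle the doubling obstacle.

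Your dichotomy ``either $V(r/2)\ge \tfrac12 V(r)$, or rerun at $r'=r/2$'' does not close. Suppose doubling fails at every scale down to $r/2^N$, and at scale $r/2^N$ you finally get $V(r/2^N)\ge \kappa_0 (r/2^N)^n$. You still need the estimate at the original scale $r$. Going back up using only $V(r)\ge V(r/2^N)$ gives $V(r)\ge \kappa_0 r^n/2^{Nn}$, which degenerates as $N$ grows; and using the \emph{failure} of doubling to climb back up gives $V(r)>2^N V(r/2^N)\ge \kappa_0 r^n/2^{N(n-1)}$, which is just as bad for $n\ge 2$. There is nothing in your argument that bounds $N$ uniformly: the termination has nothing to do with ``finiteness of $T$'' (the iteration is in the spatial scale, not in time), and the only thing that forces termination is the local Euclidean behaviour $V(\rho)/\rho^n\to \alpha_n$ as $\rho\to 0$, whose onset scale depends on the metric $g(t_0)$ and is not uniform in $t_0$.

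The paper's fix (due to B.-L. Chen) is to turn the iteration into a contradiction argument with a \emph{larger} doubling constant. One first proves the Assertion: for any $a\le r$, if $V(a)\le 3^n V(a/2)$ then $V(a)\ge \kappa a^n$, where $\kappa$ is explicit and also chosen $\le \tfrac12\alpha_n$. Now suppose $V(r)<\kappa r^n$. Then doubling with constant $3^n$ must fail at scale $r$, i.e.\ $V(r/2)<3^{-n}V(r)<3^{-n}\kappa r^n<\kappa(r/2)^n$ (here $3^{-n}<2^{-n}$ is what makes the inequality go the right way; your constant $2$ would not). By induction $V(r/2^k)<\kappa(r/2^k)^n$ for all $k$, contradicting $\lim_{k\to\infty}V(r/2^k)/(r/2^k)^n=\alpha_n\ge 2\kappa$. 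The choice of $3^n$ (any constant strictly larger than $2^n$ would do) and the framing as a contradiction are exactly what your proposal is missing.
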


\begin{pf}
Recall that
$$
\mu(g_{ij},\tau)=\inf\left\{\mathcal{W}(g_{ij},u,\tau)\ \Big|\
\int_M(4\pi\tau)^{-\frac{n}{2}}u^2dV=1\right\}.
$$
where,
$$
\mathcal{W}(g_{ij},u,\tau) = (4\pi\tau)^{-\frac{n}{2}} \int_M[\tau(Ru^2+4|\nabla
u|^2)-u^2\log u^2-nu^2] dV.
$$

Set
$$
\mu_0=\inf\limits_{0\leq\tau\leq2T}\mu(g_{ij}(0),\tau)>-\infty. \eqno(4.8)
$$
By the monotonicity of $\mu(g_{ij}(t),\tau-t)$ in Proposition 4.1, we have
$$\mu_0 \le \mu(g_{ij}(0),t_0+r^2)\le \mu(g_{ij}(t_0),r^2) \eqno(4.9)$$
for $t_0<T$ and $r^2\leq T$.

Take a smooth cut-off function $\zeta (s)$, $0\le \zeta\leq1$, such that
$$ {\zeta (s) =\left\{
       \begin{array}{ll}
  1, \ \ \qquad  |s|\le 1/2,\\[4mm]
  0, \ \ \qquad  |s|\ge 1
       \end{array}
    \right.}$$
and $|\zeta'|\leq 2$ everywhere. Define a test function $u(x)$ on $M$ by
$$
u(x)=e^{L/2}\zeta\(\frac{d_{t_0}(x_0, x)}{r}\),
$$
where the constant $L$ is chosen so that
$$(4\pi r^2)^{-\frac{n}{2}}\int_M u^2 dV_{t_0}=1$$
Note that
$$|\nabla u|^2= e^{L}r^{-2}|\zeta'(\frac{d_{t_0}(x_0, x)}{r})|^2 \quad \mbox{and} \quad u^2\log u^2=Lu^2 +e^{L}\zeta^2\log \zeta^2.$$
Also, by the definition of $u(x)$, we have
$$ (4\pi r^2)^{-\frac{n}{2}} e^{L} {\rm V}_{t_0}(x_0, r/2) \le 1, \eqno(4.10)$$ and
$$ (4\pi )^{-\frac{n}{2}} r^{-n}e^{L} {\rm V}_{t_0}(x_0, r) \ge 1. \eqno(4.11)$$

Now it follows from (4.9) and the upper bound assumption on $R$ that

\begin{align*} \mu_0 &  \le \cal W (g_{ij}(t_0), u, r^2) \\
& =  (4\pi r^2)^{-\frac{n}{2}}\int_M [r^2 (Ru^2+4|\nabla u|^2)-u^2\log
u^2-nu^2]\\
& \le 1-L-n + (4\pi r^2)^{-\frac{n}{2}}e^{L}\int_M
(4|\zeta'|^2-\zeta^2\log\zeta^2)\\
& \le 1-L-n + (4\pi r^2)^{-\frac{n}{2}}e^{L} (16+e^{-1}){\rm V}_{t_0}(x_0, r).\\
\end{align*}
Here, in the last inequality, we have used the elementary fact that $-s\log s\le e^{-1}$
for $0\le s\le 1$.
Combining the above with (4.10), we arrive at
$$\mu_0 \le 1-L-n + (16+e^{-1})\frac{{\rm V}_{t_0}(x_0, r)} {{\rm V}_{t_0}(x_0, r/2)}. \eqno(4.12)$$

Notice that if we have the volume doubling property
$$ {\rm V}_{t_0}(x_0, r)\le C {\rm V}_{t_0}(x_0, r/2)$$ for some universal constant $C>0$, then (4.11) and (4.12) together would imply
$${\rm V}_{t_0}(x_0, r) \ge \exp \{\mu_0+n-1-(16+e^{-1})C\}  r^n, \eqno(4.13)$$ thus proving the theorem.  We now describe how to bypass such a volume doubling property by
a clever argument\footnote{Perelman also used a somewhat similar argument in proving his uniform diameter estimate for the NKRF, see the proof of Claim 1 in Section 6.}
pointed out by B.-L. Chen back in 2003.

Notice that the above argument is also valid if we replace $r$ by any positive number $0<a\le r$. Thus, at least we
have shown the following

\smallskip
\noindent {\bf Assertion}: Set
$$\kappa=\min\left\{ \exp [\mu_0+n-1-(16+e^{-1})3^n], \frac{1}{2}\alpha_n\right\},
$$
where $\alpha_n$ is the volume of the unit ball in $\mathbb{R}^n$.
Then, for any $0<a\le r$, we have
$$ {\rm V}_{t_0}(x_0,a)\geq\kappa a^n,  \eqno{(\ast)_a}
$$ whenever the volume doubling property, $${\rm
V}_{t_0}(x_0,a) \leq 3^{n}{\rm
V}_{t_0} (x_0,a/2), $$ holds.

\smallskip
Now we finish the proof by contradiction. Suppose $(\ast)_a$ fails for $a=r$. Then we must have
\begin{displaymath}
\begin{split}
  {\rm V}_{t_0}(x_0,\frac{r}{2})
&  < 3^{-n}V_{t_0}(x_0,r)\\
   &  < 3^{-n}\kappa r^n\\
   &  <\kappa\(\frac{r}{2}\)^n.
\end{split}
\end{displaymath}
This says that $(\ast)_{r/2}$ would also fail. By
induction, we deduce that
$$
{\rm V}_{t_0}(x_0,\frac{r}{2^k})<\kappa\(\frac{r}{2^k}\)^n\
\ \ \text{for all } k\geq1.
$$
But this contradicts the fact that
$$\lim\limits_{k\rightarrow\infty} \frac {
{\rm V}_{t_0}(x_0,\frac{r}{2^k})} {\(\frac{r}{2^k}\)^n}=\alpha_n.$$
\end{pf}

\medskip

{\bf 4.3 The $\mu$-entropy and the strong noncollapsing estimate for the NKRF}

\smallskip
\noindent To convert the $\kappa$-noncollapsing theorem for the Ricci flow to the KRF and NKRF,
first note that for any local holomorphic coordinates $(z^1, \cdots, z^n)$ with $z^i=x^i +\sqrt{-1}y^{i}$,  $(x^1, \cdots, x^n, y^1, \cdots, y^n)$ form a preferred
smooth local coordinates with
$$\frac {\partial} {\partial z^i}=\frac {1}{2} (\frac {\partial} {\partial x^i} -\sqrt {-1} \frac {\partial} {\partial y^i}) \qquad \mbox{and} \qquad
\frac {\partial} {\partial \bar z^i} =\frac 1 2 (\frac {\partial} {\partial x^i} +\sqrt {-1} \frac {\partial} {\partial y^i}).$$ Thus, in terms of the corresponding Riemannian metric
$ds^2$, we have

$$ds^2 (\frac {\partial}{\partial x^i}, \frac {\partial}{\partial x^j})=ds^2 (\frac {\partial}{\partial y^i}, \frac {\partial}{\partial y^j})=2 \Re (g_{i\bar j})$$ while
$$ ds^2 (\frac {\partial}{\partial x^i}, \frac {\partial}{\partial y^j})=2\Im  (g_{i\bar j}). $$ In particular,
for any $(z^1, \cdots, z^n)$ with $g_{i\bar j}=\delta_{i\bar j}$ (e.g., under normal  coordinates), then
$$ds^2 (\frac {\partial}{\partial x^i}, \frac {\partial}{\partial x^j})=ds^2 (\frac {\partial}{\partial y^i}, \frac {\partial}{\partial y^j})=2\delta_{ij} \quad \mbox{and} \quad ds^2 (\frac {\partial}{\partial x^i}, \frac {\partial}{\partial y^j})=0.$$
(Thus, we can symbolically express the Riemannian metric $g_{\mathbb R}=ds^2=2 g_{i\bar j}.$)

On the other hand, if $R_{i\bar j}=\lambda \delta_{i\bar j}$ under the normal holomorphic coordinates $(z^1, \cdots, z^n)$ then, for the Riemannian Ricci tensor $Rc_{ds^2}$, we have
$$Rc_{ds^2} (\frac {\partial}{\partial x^i}, \frac {\partial}{\partial x^j})=Rc_{ds^2}(\frac {\partial}{\partial y^i}, \frac {\partial}{\partial y^j})=2\lambda \delta_{ij} \quad \mbox{and} \quad
Rc_{ds^2} (\frac {\partial}{\partial x^i}, \frac {\partial}{\partial y^j})=0.$$  That is, $$Rc_{ds^2}=\lambda ds^2,$$ so we have the same Einstein constant $\lambda$.

Note that we also have the following relations:

\smallskip
$\bullet$ The scalar curvature: $R_{ds^2}=2R $

$\bullet$ The Laplace operator: $ \Delta_{ds^2}=2 \Delta$

$\bullet$ The norm square of the gradient of a function: $|\nabla f|^2_{ds^2}= 2 |\nabla f|^2$,   etc.

\smallskip
\noindent In particular, we have
$$R_{ds^2}+|\nabla f|^2_{ds^2}=2(R+|\nabla f|^2).$$

Therefore, with $\sigma=2\tau$, the Riemannian $\cal W$-functional on $(X^n, g_{i \bar j})$ is given by
$$\mathcal{W}= \frac{1} {(2\pi\sigma)^{n}}\!\int_X [\sigma (R+|\nabla f|^2) +f -2n] e^{-f} dV, \eqno (4.14)$$
or, with $u=e^{-f/2}$, by
$$\mathcal{W}(g_{i\bar j},u,\sigma)= \frac{1} {(2\pi\sigma)^{n}}\!\int_X [\sigma(Ru^2+4|\nabla
u|^2)-u^2\log u^2-2nu^2] dV \eqno (4.15)$$
with respect to the K\"ahler metric $g_{i\bar j}$.

The $\mu$-entropy is then given by
$$\mu= \mu(g_{i\bar j},\sigma)=\inf\left\{\mathcal{W}(g_{i\bar j},u,\sigma)\!:
(2\pi\sigma)^{-n}\int_X  u^2 dV=1\right\}.
$$

For any solution $\hat g_{i\bar j} (s)$ to the KRF on the maximal time interval $[0, 1)$, by taking $\sigma=1-s$, it follows that
$ \mu(\hat g_{i\bar j}(s),1-s)$ is monotone increasing in $s$. By the scaling invariance property of $\mu$ in (4.3) and the relation between
KRF and NKRF as described in Lemma 2.3, we get
$$\mu(\hat g_{i\bar j}(s),1-s)=\mu(g_{i\bar j} (t), 1). \eqno(4.16)$$

Thus,  by the monotonicity of $\mu(\hat g_{i\bar j}(s),1-s)$ and $ds/dt=e^{-t}>0$, we have

\begin{lemma} Let $g_{i\bar j}(t)$ be a solution to the NKRF on $X^n\times [0, \infty)$. Then,
\begin{align*}
\mu(g_{i\bar j} (t), 1) & =\inf\left\{ \frac {1} {(2\pi)^{n}}\! \int_X \(R+|\nabla f|^2 +f -2n\) e^{-f} dV\!:
\frac {1} {(2\pi)^{n}} \int_X e^{-f} dV=1\right\} \\
& =\inf\left\{ \frac {1} {(2\pi)^{n}}\! \int_X (Ru^2+4|\nabla
u|^2-u^2\log u^2-2nu^2)\!:
\frac {1} {(2\pi)^{n}} \int_X u^2=1\right\}
\end{align*}
 is monotone increasing in $t$.
\end{lemma}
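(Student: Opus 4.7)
The plan is to obtain the lemma as an immediate corollary of Perelman's monotonicity theorem (Proposition 4.1), applied to the associated KRF solution $\hat g_{i\bar j}(s)$ on $X^n\times[0,1)$, together with the parabolic scaling invariance (4.3) and the KRF/NKRF correspondence of Lemma 2.3. The monotonicity of $\mu$ is really a feature of the (unnormalized) flow; the only work is to pull it back to the normalized flow.

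\textbf{Step 1 (apply Proposition 4.1 to the KRF solution).} Interpret the KRF (2.1) as a Ricci flow on the underlying Riemannian manifold, using the dictionary of Section 4.3 ($g_{\mathbb R}=2g_{i\bar j}$, etc.); the factor of $2$ between the two conventions absorbs the factor of $2$ between $\partial_s\hat g_{i\bar j}=-R_{i\bar j}$ and $\partial_t g_{\mathbb R}=-2\text{Rc}$. Since $[\omega_0]=\pi c_1(X)$, the maximal existence time for the KRF is $T=1$ (by (2.2) and Theorem 2.1). Apply Proposition 4.1 with $T_0=1$ to conclude that
\begin{equation*}
s \;\longmapsto\; \mu\bigl(\hat g_{i\bar j}(s),\,1-s\bigr) \qquad \text{is monotone nondecreasing on } [0,1).
\end{equation*}

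\textbf{Step 2 (pass to the NKRF by scaling).} By Lemma 2.3, $\hat g_{i\bar j}(s)=(1-s)\,g_{i\bar j}(t(s))$ with $t=-\log(1-s)$. By the parabolic scaling invariance (4.3) applied with $a=1-s$,
\begin{equation*}
\mu\bigl(\hat g_{i\bar j}(s),\,1-s\bigr)\;=\;\mu\bigl((1-s)\,g_{i\bar j}(t),\,(1-s)\cdot 1\bigr)\;=\;\mu\bigl(g_{i\bar j}(t),\,1\bigr),
\end{equation*}
which is precisely (4.16). Since $s\mapsto t(s)=-\log(1-s)$ is a strictly increasing bijection $[0,1)\to[0,\infty)$ (equivalently $ds/dt=e^{-t}>0$), the monotonicity from Step 1 transfers to give that $t\mapsto\mu(g_{i\bar j}(t),1)$ is monotone nondecreasing on $[0,\infty)$.

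\textbf{Step 3 (identify the formula).} The two explicit expressions in the lemma are simply the definition of $\mu(g_{i\bar j}(t),1)$: substitute $\sigma=1$ into the Kähler forms (4.14) and (4.15) of $\mathcal W$ and take the infimum over normalized $f$ (resp.\ $u=e^{-f/2}$).

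No serious obstacle is expected: the only point requiring care is the bookkeeping between the KRF/Ricci-flow conventions and the use of scaling invariance with $a=1-s$, which are both addressed above. Strict monotonicity, if desired, follows from the rigidity clause of Proposition 4.1, since a Fano manifold admitting a gradient shrinking soliton structure is a rigid situation and certainly not generic along the NKRF.
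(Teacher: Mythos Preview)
Your proposal is correct and follows essentially the same route as the paper: apply Proposition 4.1 to the unnormalized KRF $\hat g(s)$ with $T_0=1$, invoke the scaling identity (4.16) via Lemma 2.3 and (4.3), and transfer monotonicity through the increasing reparametrization $t=-\log(1-s)$. The paper's argument is exactly this, stated more tersely in the paragraph immediately preceding the lemma.
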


Finally, we have the corresponding strong no local collapsing theorem for the NKRF:

\begin{theorem} [Strong no local collapsing theorem for NKRF] Let $X^n$ be a Fano manifold, and let $g_{i\bar j}(t)$
be a solution to the NKRF (2.5) on $X^n\times [0,\infty)$. Then there
exists a positive constant $\kappa>0$, depending only the initial
metric $g_0$, such that $g_{i\bar j}(t)$ is strongly $\kappa$-noncollapsed at very point $(x_0,t_0)\in M \times [0,\infty)$
on all scales less than $e^{t_0/2}$ in the following sense: for any $(x_0,t_0)\in X \times [0,\infty)$ and $0<r\le e^{t_0/2}$ we have
$$V_{t_0}(x_0,r) \geq \kappa r^{2n}, \eqno(4.17)$$ whenever
$$
R(\cdot, t_0) \leq r^{-2}  \quad \mbox{on} \ B_{t_0}(x_0, r). \eqno(4.18)
$$
\end{theorem}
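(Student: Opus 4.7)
The plan is to reduce to the KRF via Lemma 2.3, apply a Perelman-type monotonicity argument on the KRF, and scale the resulting bound back. Fix $(x_0,t_0)$ and $r$ with $0<r\le e^{t_0/2}$ satisfying (4.18), and set $s_0=1-e^{-t_0}\in[0,1)$ and $\hat r=e^{-t_0/2}r\le 1$. By Lemma 2.3, $\hat g(s_0)=e^{-t_0}g(t_0)$, so $d^g_{t_0}=e^{t_0/2}d^{\hat g}_{s_0}$, which yields the set equality $B^g_{t_0}(x_0,r)=B^{\hat g}_{s_0}(x_0,\hat r)$ together with $V^g_{t_0}(x_0,r)=e^{nt_0}V^{\hat g}_{s_0}(x_0,\hat r)$. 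By Corollary 2.1, $\hat R(s_0)=R(t_0)/(1-s_0)=e^{t_0}R(t_0)$, so (4.18) becomes $\hat R\le\hat r^{-2}$ on $B^{\hat g}_{s_0}(x_0,\hat r)$. Since $\hat r^{2n}=e^{-nt_0}r^{2n}$, any lower bound $V^{\hat g}_{s_0}(x_0,\hat r)\ge\kappa\hat r^{2n}$ at the KRF level transfers to $V^g_{t_0}(x_0,r)\ge\kappa r^{2n}$ with the same $\kappa$.

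For the KRF side, I would first establish that $s\mapsto\mu(\hat g(s),T_0-s)$ is nondecreasing for $0\le s<T_0$ with $s<1$. This follows either by carrying out Perelman's computation in Lemma 4.2 in the K\"ahler form using (4.14)-(4.15), or by applying Lemma 4.2 to the standard Ricci flow after the time rescaling $\tilde t=t/2$ and using the identification $\mu^{\mathbb R}(g_{\mathbb R},\tau)=\mu(g_{i\bar j},2\tau)$ from Section 4.3. Taking $T_0=s_0+\hat r^2\in(0,2]$ then gives
\[
\mu(\hat g(s_0),\hat r^2)\;\ge\;\mu(\tilde g,\,s_0+\hat r^2)\;\ge\;\mu_0,
\]
where $\mu_0:=\inf_{0<\sigma\le 2}\mu(\tilde g,\sigma)>-\infty$ depends only on the initial metric $\tilde g$.

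Next, following the proof of Theorem 4.1 essentially verbatim but in the K\"ahler setting, I would use the test function $u(x)=e^{L/2}\zeta(d^{\hat g}_{s_0}(x_0,x)/\hat r)$ normalized so that $(2\pi\hat r^2)^{-n}\!\int u^2\,dV^{\hat g}_{s_0}=1$. Using (4.15), the curvature bound $\hat R\le\hat r^{-2}$ on the support of $u$, together with $|\zeta'|\le 2$ and $-s\log s\le e^{-1}$ on $[0,1]$, the inequality $\mu_0\le\mathcal{W}(\hat g(s_0),u,\hat r^2)$ reduces to
\[
\mu_0\;\le\;1-L-2n+(16+e^{-1})\,\frac{V^{\hat g}_{s_0}(x_0,\hat r)}{V^{\hat g}_{s_0}(x_0,\hat r/2)},
\]
coupled with the normalization $(2\pi\hat r^2)^{-n}e^{L}V^{\hat g}_{s_0}(x_0,\hat r)\ge 1$. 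Assuming the desired lower bound $V^{\hat g}_{s_0}(x_0,\hat r)\ge\kappa\hat r^{2n}$ fails, one then runs Chen's dyadic trick as in the proof of Theorem 4.1: its failure at scale $\hat r$ forces volume doubling to fail there, hence iteratively at every dyadic scale $\hat r/2^k$, contradicting $V^{\hat g}_{s_0}(x_0,\rho)/\rho^{2n}\to\alpha_{2n}$ as $\rho\to 0$.

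The main obstacle is the scaling bookkeeping in the first paragraph, in particular verifying that the factor-of-two mismatch between KRF and the standard Ricci flow is absorbed correctly in the $\sigma=2\tau$ identification, so that Perelman's monotonicity yields a lower bound on $\mu(\hat g(s_0),\hat r^2)$ for the entire range $\hat r\in(0,1]$ (equivalently $r\in(0,e^{t_0/2}]$). Once this is settled, the remainder is a direct transcription of Theorem 4.1 in real dimension $2n$, producing a $\kappa$ that depends only on $\mu_0$ and $n$, hence only on $\tilde g$.
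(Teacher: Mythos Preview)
Your proposal is correct and takes essentially the same approach as the paper: the paper's proof is a one-line reduction to Theorem~4.1 via Lemma~2.3, and you have simply written out the scaling bookkeeping and the K\"ahler $\mathcal{W}$-functional argument in detail. The factor-of-two concern you flag is exactly what is absorbed by the identification $\sigma=2\tau$ in Section~4.3, so that under KRF the quantity $\mu(\hat g(s),\sigma_0-s)$ is nondecreasing and your choice $\sigma_0=s_0+\hat r^2\in(0,2)$ yields the required lower bound $\mu_0$.
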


\begin{pf} This is an immediate consequence of Theorem 4.1 applied to the KRF on $X^n\times [0,1)$,  and the relation between the KRF and the NKRF as described by Lemma 2.3.
\end{pf}

\bigskip
\section {Uniform curvature and diameter estimates for NKRF with nonnegative bisectional curvature}

Our goal in this section is to prove the uniform diameter and (scalar) curvature estimates by B.L Chen, X.-P. Zhu and the author \cite{CCZ03} for the NKRF
with nonnegative holomorphic bisectional curvature. The main ingredients of the proof are the Harnack estimate in Theorem 3.4 and the  strong
non-collapsing estimate in Theorem 4.2 for the NKRF.

\medskip

\begin{theorem} Let $(X^n, \tilde g_{i\bar j})$ be a compact K\"ahler manifold with nonnegative bisectional curvature and let $g_{i\bar j}(t)$ be the solution to
the NKRF with $g_{i\bar j}(0)=\tilde g_{i\bar j}$. Then, there exist positive constants $C_1>0$ and $C_2>0$ such that

\medskip

(i)  $|Rm|(x, t)\le C_1$ for all $(x, t)\in X\times [0,\infty)$;

\medskip

(ii) {\rm diam} \!$(X^n, g_{i\bar j}(t))\le C_2$ for all $t\ge 0$.
\end{theorem}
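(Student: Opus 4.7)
The strategy is to establish (i) first, via a blow-up/ancient-solution argument combining the Harnack estimate of Theorem 3.4 with the strong $\kappa$-noncollapsing of Theorem 4.2, and then deduce (ii) from (i) by a ball-packing argument.

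Some preliminary reductions come essentially for free. Because NKRF preserves the K\"ahler class $[\omega(t)] = \pi c_1(X)$, the volume $V := \Vol(X, g(t))$ is independent of $t$; combined with Lemma 2.2 this yields
\[
\int_X R(\cdot, t)\, dV_t \;=\; n V \qquad \text{for every } t \ge 0.
\]
By Theorem 2.2, nonnegative bisectional curvature is preserved, so $R, R_{i\bar j} \ge 0$; moreover, a standard algebraic inequality for nonnegative bisectional curvature tensors gives $|Rm| \le C(n)\, R$ pointwise. Hence (i) reduces to a uniform upper bound on $R$.

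Step 1 (scalar curvature bound). Argue by contradiction: suppose $Q_k := R(x_k, t_k) \to \infty$ along a sequence $(x_k, t_k) \in X \times [0, \infty)$. Form the parabolic rescalings
\[
\tilde g_k(s) \;:=\; Q_k\, g\!\left(t_k + s/Q_k\right), \qquad s \in [-t_k Q_k, \infty),
\]
which solve the K\"ahler-Ricci flow (2.1) up to the term $(1/Q_k) \tilde g_k$ that vanishes as $k \to \infty$. They carry nonneg bisectional curvature, satisfy $\tilde R_k(x_k, 0) = 1$, and by Theorem 4.2 are $\kappa$-noncollapsed at scale $1$ (once $k$ is large enough that $1/\sqrt{Q_k} \le e^{t_k/2}$). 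Theorem 3.4 provides local uniform curvature bounds on any compact space-time region of the rescalings. Hamilton's Cheeger-Gromov compactness theorem, in its K\"ahler form, then extracts a pointed limit $(\tilde M_\infty, \tilde g_\infty(s), \tilde x_\infty)$ on $(-\infty, 0]$: a complete, ancient, $\kappa$-noncollapsed solution to the K\"ahler-Ricci flow with bounded, nonneg bisectional curvature, normalized by $\tilde R(\tilde x_\infty, 0) = 1$. The Li-Yau-Hamilton estimate (Theorem 3.3) applies to this limit over arbitrarily long backward time intervals, forcing strong rigidity. Combined with the rescaled form of the constraint $\int_X R\, dV = nV$ (which upon rescaling gives $\int \tilde R_k\, d\tilde V_k = Q_k^{n-1} nV \to \infty$) together with $\kappa$-noncollapsing on every scale, one derives a contradiction: the limit must be a noncompact, noncollapsed ancient solution of finite rescaled integral scalar curvature on bounded regions, a combination incompatible with the Harnack/LYH rigidity of the limit. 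Hence $R \le C_1'$ uniformly, and $|Rm| \le C_1 := C(n) C_1'$.

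Step 2 (diameter bound). Set $r_0 := \min\bigl(1/\sqrt{C_1},\, 1\bigr)$. For every $t \ge r_0^2$ and every $x \in X$, Theorem 4.2 gives
\[
V_t\bigl(B_t(x, r_0)\bigr) \;\ge\; \kappa\, r_0^{2n}.
\]
Along a minimizing geodesic realizing $D_t := \textrm{diam}(X, g(t))$ one selects $N \ge \lfloor D_t/(2 r_0)\rfloor$ centers with pairwise disjoint $r_0$-balls. Summing volumes yields
\[
N \kappa r_0^{2n} \;\le\; \sum_{i=1}^{N} V_t\bigl(B_t(x_i, r_0)\bigr) \;\le\; V,
\]
whence $D_t \le C_2$ for $t \ge r_0^2$; the finite initial interval $[0, r_0^2]$ is handled by short-time smoothness of the flow.

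The principal obstacle is the ancient-solution rigidity inside Step 1: one must show that no complete, ancient, $\kappa$-noncollapsed K\"ahler-Ricci flow with bounded nonneg bisectional curvature can arise as a blow-up limit under the normalization inherited from the compact Fano setting. The key tools are a K\"ahler version of Hamilton's compactness theorem, the Li-Yau-Hamilton estimate of Theorem 3.3 applied on the ancient limit, and careful bookkeeping of how the integral identity $\int R\, dV = nV$ transforms under parabolic rescaling.
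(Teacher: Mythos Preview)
Your Step 2 is fine once (i) is in hand, but Step 1 has a genuine gap --- and it is exactly the one you flag as ``the principal obstacle.'' You do not supply the rigidity statement needed to rule out the ancient limit, and in fact no such statement is available at this point in the paper: complete ancient $\kappa$-noncollapsed K\"ahler--Ricci flows with bounded nonnegative bisectional curvature certainly exist (e.g.\ $\mathbb{CP}^n$ itself, or products), so the mere existence of such a limit is no contradiction. The bookkeeping with $\int \tilde R_k\, d\tilde V_k = Q_k^{n-1} nV \to \infty$ does not help, since this integral is over all of $X$ and says nothing about the pointed limit on bounded regions. There is also a secondary issue: without a point-picking argument you have not arranged that curvature is bounded by $\sim Q_k$ near $(x_k,t_k)$, so you cannot invoke Hamilton's compactness to extract a limit in the first place; the Harnack inequality (3.18) bounds $R$ at an \emph{earlier} time in terms of $R$ at a \emph{later} time, and knowing only $\tilde R_k(x_k,0)=1$ gives no control on $\tilde R_k$ at nearby points at $s=0$ or $s>0$.

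The paper's proof avoids blow-up entirely and is both shorter and more elementary. The order is reversed: one first gets only a \emph{local} curvature bound, then the diameter bound, and only then the global curvature bound. Concretely: since the average scalar curvature equals $n$ for all $t$, for each $t\ge 1$ there is a point $y_t$ with $R(y_t,t+1)=n$; the Harnack inequality (3.18) with $t_1=t$, $t_2=t+1$, $y=y_t$ then gives a uniform bound $R\le C$ on the unit ball $B_t(y_t,1)$. Strong noncollapsing (Theorem 4.2) yields $V_t(y_t,1)\ge \kappa$. Now use nonnegative Ricci and Yau's linear volume growth: if $\mathrm{diam}_t(X)\ge D$ then $V_0 = V_t(X) \ge V_t(y_t,D) \ge c\,\kappa\,D$, forcing $D\le C_2$. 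Finally, with the diameter uniformly bounded, the same Harnack inequality (5.1) gives the global scalar curvature bound, and $|Rm|\le C(n)R$ finishes (i). The key insight you are missing is that the Harnack estimate, anchored at the mean-value point $y_t$, already gives enough local control to run the noncollapsing and volume-growth argument directly --- no compactness or limit extraction is needed.
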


\begin{pf} By Theorem 2.2, we know that $g_{i\bar j}(t)$ has nonnegative bisectional curvature for all $t\ge 0$. Thus, it suffices to show the uniform upper bound for the scalar curvature
$$R(x, t)\le C_1 $$
on $  X\times [0,\infty).$ We divide the proof into several steps:
\medskip

{\bf Step 1: A local uniform bound on $R$}
\smallskip

First of all, we know that the volume $V_t(X^n)=\Vol (X, g_{i\bar j}(t))$ and the total scalar curvature $\int _{X^n} R(x, t) dV_t$ are constant along the NKRF. Hence the average scalar curvature
is also constant. In fact,
$$ \frac {1} {V_t(X^n)} \int _{X^n} R(x, t) dV_t =n, \quad \mbox{for all} \ t\ge 0.$$

Now, $\forall \ t>1$, set $t_1=t$, $t_2=t+1$ and pick a point $y_t\in X$ such that $$R(y_t, t+1)=n.$$ Then, $\forall \ x\in X$, by the Harnack inequality in Theorem 3.4,
and noting that $\forall t\ge 1$,
$$\frac{e^{t+1}-1}{ e^{t}-1} \le e+1,$$  we have
$$R(x, t) \le n (e+1)  \exp \(\frac {e} {4}  d^2_{t} {(x, y_t)}\).   \eqno (5.1)$$
In particular,
when $d_{t} (y_{t}, x)<1$, we obtain
a uniform upper bound
$$R(\cdot, t)\le n(e+1)\exp (e^2/4) \eqno(5.2)$$ on the unit geodesic ball $B_t(y_t, 1)$ at time $t$, for all $ t\ge 1$.

\medskip
{\bf Step 2: The uniform diameter bound}
\smallskip

Now we have the uniform upper bound (5.2) for the scalar curvature on $B_t (y_t, 1)$. By applying the strong no
local collapsing Theorem 4.2, there exists  a
positive constant $\kappa>0$, depending only on the initial metric $g_{0}$,  such that we have the following uniform lower bound
$${\rm V}_{t} (y_t, 1)\ge \kappa>0$$ for the volume of the unit geodesic ball $B_t (y_t, 1)$ for all $t\ge1$.

Suppose ${\rm diam} (X, g_{i\bar j}(t))$ is not uniformly bounded
from above in $t$. Then, there exist  a sequence of positive
numbers $\{D_k\}\to \infty$ and a time sequence $\{t_k\}\to
\infty$  such that
$${\rm diam} \ (X, g_{i\bar j}(t_k))>D_k. $$
However, since $g_{i\bar j}(t_k)$ has nonnegative Ricci curvature, it follows from an argument of Yau (cf. p.24 in \cite{ScY}) that there exists a
universal constant $C=C(n)>0$ such that
$${\rm V}_{t_k} (y_{t_k}, D_{k})\ge C {\rm V}_{t_k} (y_{t_k}, 1) D_{k} \ge \kappa C D_{k} \to \infty.$$
But this contradicts the fact that  $${\rm V}_{t_k} (y_{t_k}, D_{k})\le V_{t_k} (X^n)=V_0, \qquad k=1, 2, \cdots. $$ Thus, we have proved the uniform diameter bound:
there exists a positive constant $D>0$ such that for all $t>0$,
 $${\rm diam} \ (X, g_{i\bar j}(t))\le D. \eqno(5.3)$$

\medskip
{\bf Step 3: The global uniform bound on $R$}
\smallskip

Once we have the uniform diameter upper bound (5.3),  the Harnack inequality (5.1) immediately implies  the uniform scalar curvature upper bound,
$$R(x, t)\le n(e+1) e^{e D^2/4},$$
on $X^n\times [0, \infty)$.
\end{pf}

\medskip
\begin{remark}
As mentioned in the introduction, assuming in addition the existence of K-E metrics, Chen and Tian studied the NKRF with nonnegative bisectional curvature on Del Pezzo surfaces \cite{CTi02} and Fano 
manifolds in higher dimensions \cite{CTi06}.
\end{remark}

\section {Perelman's uniform scalar curvature and diameter estimates for NKRF}

In the previous section, we saw that when a solution $g_{i\bar j}(t)$ to the NKRF has nonnegative bisectional curvature, then the uniform diameter and curvature bounds follow from
a nice interplay between the Harnack inequality for the scalar curvature $R$ and the strong no local collapsing theorem.  In this section, we shall see Perelman's
amazing uniform estimates on the diameter and the scalar curvature for the NKRF on general Fano manifolds (Theorem 6.1). In absence of the Harnack inequality, Perelman's
proof is much more subtle, yet the monotonicity of the $\mu$-entropy and the ideas used in the proof of the
strong non-collapsing estimate played a crucial role.

The material presented in this section follows closely what Perelman gave in a private lecture at MIT in April, 2003. As such,  it naturally overlaps considerably 
with the earlier notes by Sesum-Tian \cite{Se2}  on Perelman's work\footnote{Perelman's private lecture was attended by a very small audience, including this author 
and the authors of \cite{Se2}.}.
I also presented Perelman's uniform estimates at the Geometry and Analysis seminar at Columbia University in fall 2005.

\begin{theorem} Let $X^n$ be a Fano manifold and $g_{i\bar j}(t)$, $0\le t<\infty$, be the solution to the NKRF
$$\frac{\partial}{\partial t} g_{i\bar j} = - R_{i\bar j} + g_{i\bar j}, \quad g(0) = \tilde g  \eqno(6.1)$$
with the initial metric $g_{0}=\tilde g$ satisfying $[\omega_0]=\pi c_1(X)$. Let $f=f(t)$ be the Ricci potential of $g_{i\bar j}(t)$ satisfying
$$ -R_{i\bar j}(t)+g_{i\bar j}(t)=\partial_i\partial_{\bar j} f \eqno(6.2) $$ and the normalization
$$\int_{X^n} e^{-f} dV=(2\pi)^n. \eqno(6.3)$$  Then there exists
a constant $C>0$ such that

\medskip

(i) $|R|\le C$ on $X^n \times [0,\infty)$;

\medskip

(ii) ${\rm diam} (X^n, g_{i\bar j}(t)) \le C$;

\medskip

(iii)  $||f||_{C^1}\le C$ on $X^n \times [0,\infty)$.

\end{theorem}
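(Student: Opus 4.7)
The proof will follow Perelman's original strategy: combine the monotonicity of the $\mu$-entropy (Proposition 4.1, Lemma 4.3) with the strong $\kappa$-noncollapsing of Theorem 4.2, using the Ricci potential $f$ as the main analytic object. Differentiating (6.2) along (6.1) and preserving the normalization (6.3) gives the evolution
\[
\partial_t f = \Delta f + f - a(t), \qquad R + \Delta f = n,
\]
where the second identity is the trace of (6.2) and $a(t) = (2\pi)^{-n}\!\int_X f\, e^{-f}\, dV$ is the spatial constant determined by (6.3). Integrating $R + \Delta f = n$ against $e^{-f}$ and using $\int_X \Delta f\cdot e^{-f}\, dV = \int_X |\nabla f|^2 e^{-f}\, dV$ gives the key balance $\int_X (R + |\nabla f|^2) e^{-f}\, dV = n(2\pi)^n$. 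Substituting the test function $u = e^{-f/2}$ into the K\"ahler form of $\mathcal{W}$ at $\tau = 1$ in Lemma 4.3 then yields
\[
\mu(g(t), 1) \le \mathcal{W}(g(t), f, 1) = \frac{1}{(2\pi)^n}\int_X f\, e^{-f}\, dV - n = a(t) - n,
\]
so the monotonicity of $\mu(g(t), 1)$ (Lemma 4.3) gives the uniform lower bound $a(t) \ge a_0$, a first integral control of $f$.

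Next I would establish (i). The lower bound $R \ge -C$ follows immediately from the evolution $\partial_t R = \Delta R + |Rc|^2 - R$ of Lemma 2.5 (with $\lambda = 1$), the Cauchy-Schwarz inequality $|Rc|^2 \ge R^2/n$ valid in the K\"ahler setting, and the minimum principle applied to the ODE $\dot r = r^2/n - r$. For the upper bound on $R$ I would exploit that the uniform lower bound on $\mu(g(t), \tau)$ at small $\tau$ is equivalent to a uniform log-Sobolev, and hence (combined with the non-collapsing of Theorem 4.2) a uniform Sobolev inequality along the flow. One then runs Moser iteration on the parabolic inequality for $R^+$ coming from Lemma 2.5, feeding it the weighted $L^1$ bound $\int_X R\, e^{-f}\, dV = n(2\pi)^n - \int_X |\nabla f|^2 e^{-f}\, dV \le n(2\pi)^n$ together with the lower bound on $R$. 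Once $R \in L^\infty$, the identity $R + \Delta f = n$ and standard elliptic regularity yield $L^\infty$ and $C^{1,\alpha}$ bounds on $f$, which is part (iii); the $C^1$ bound can then be sharpened by applying the maximum principle to $|\nabla f|^2$ (or to $|\nabla f|^2 + \alpha f$ for a well-chosen $\alpha$), controlling the Bochner commutators via the already-bounded $R$ and $\Delta f$.

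Finally, for the diameter bound (ii), I would argue by contradiction along a sequence $t_k \to \infty$ with $\mathrm{diam}(X, g(t_k)) \to \infty$. The control of $f$ from Step 2, together with the uniform volume $V_0$ and the normalization $\int_X e^{-f}\, dV = (2\pi)^n$, forces most of the weighted measure $e^{-f}\, dV$ to concentrate in regions of bounded metric diameter; the strong non-collapsing of Theorem 4.2 (now applicable since $R$ is uniformly bounded) then prevents any remaining component from carrying a definite amount of $dV$ without also carrying a definite $e^{-f}\, dV$, contradicting the total mass constraint. The principal obstacle is the circular-looking bootstrap in Step 2: the non-collapsing needs a local curvature bound, the curvature bound needs Sobolev, and Sobolev needs non-collapsing. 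Perelman's insight -- which I would adopt -- is that the $\mu$-entropy at a fixed scale $\tau = 1$ is strong enough to decouple these, because it directly produces a log-Sobolev inequality whose constants depend only on $\mu(\tilde g, 1)$, and the covering trick used in the proof of Theorem 4.1 lets one upgrade pointwise local estimates to global ones without presupposing any diameter bound.
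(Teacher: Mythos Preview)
Your setup is correct and matches the paper: the evolution $\partial_t f=\Delta f+f-a(t)$, the trace identity $R+\Delta f=n$, the lower bound $R\ge -C_1$ from the maximum principle, and the two-sided bound on $a(t)$ from the monotonicity of $\mu(g(t),1)$ are exactly Lemmas 6.1 and 6.2.

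The real gap is your route to the upper bound on $R$. You propose Moser iteration on ``the parabolic inequality for $R^+$ coming from Lemma 2.5'', but that equation is $\partial_t R=\Delta R+|Rc|^2-R$, and the term $|Rc|^2$ is \emph{not} dominated by any function of $R$: the Ricci eigenvalues can have mixed signs, so $|Rc|^2$ can be arbitrarily large while $R$ stays bounded. There is no useful subsolution inequality to iterate, and the weighted $L^1$ information $\int_X(R+|\nabla f|^2)e^{-f}\,dV=n(2\pi)^n$ does not close the loop. The subsequent elliptic step is equally circular: from $\Delta f=n-R\in L^\infty$ you cannot extract uniform $L^\infty$ or $C^{1}$ bounds on $f$ without controlling the elliptic constants of $g(t)$, which themselves depend on the diameter you have not yet bounded.

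The paper (following Perelman) decouples these in a different order. First one proves a uniform \emph{lower} bound $f\ge -C_3$ by a contradiction argument using the K\"ahler potential $\varphi$ (with $f=\varphi_t$) and the Green's function of the \emph{initial} metric (Lemma 6.3). Then a parabolic Yau-type gradient estimate, applied via the maximum principle to the quotients $|\nabla f|^2/(f+B)$ and $(|\nabla f|^2+\tfrac12 R)/(f+B)$, yields the \emph{relative} bounds
\[
|\nabla f|^2\le C_4(f+2C_3),\qquad R\le C_4(f+2C_3),
\]
with constants independent of $t$ and of any diameter information (Lemma 6.4). Since $\sqrt{f+2C_3}$ is then uniformly Lipschitz, $f$, $|\nabla f|^2$ and $R$ grow at most quadratically in $d_t(\hat x,\cdot)$ from the minimum point $\hat x$ of $f$. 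Only now does the diameter bound come: not from a concentration-of-measure sketch, but from a delicate annulus argument (Lemma 6.5) in which one finds dyadic shells $A_t(k_1,k_2)$ of small volume satisfying a reverse doubling property, estimates $\int_{A_t} R\,dV$ via $R+\Delta f=n$ and the gradient bound on the bounding spheres, and plugs a cut-off supported in that annulus into $\mathcal W(g(t),\cdot,1)$ to force $\mu(g(0),1)\to -\infty$, a contradiction. The upper bounds on $R$ and $\|f\|_{C^1}$ are then immediate consequences of the diameter bound together with the quadratic-growth estimates, not the other way around.
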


\begin{pf} First of all, by Lemma 2.5, we know that under (6.1)  the scalar curvature $R$ evolves according to the equation
$$ \frac{\partial} {\partial t} R=\Delta R +|Rc|^2-R.$$

\begin{lemma} There exists a constant $C_1>0$ such that
the scalar curvature $R$ of the NKRF (6.1) satisfies the estimate
$$R(x, t)\ge -C_1.$$
for all $t\ge 0$ and all $x\in X^n$.
\end{lemma}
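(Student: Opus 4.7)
The plan is to obtain the lower bound directly from the scalar curvature evolution equation by the parabolic maximum principle. By Lemma 2.5 (applied with $\lambda = 1$), the scalar curvature of a solution to the NKRF (6.1) satisfies
\begin{equation*}
\frac{\partial R}{\partial t} = \Delta R + |Rc|^2 - R.
\end{equation*}
Since $|Rc|^2 \geq 0$ pointwise, we have the differential inequality
\begin{equation*}
\frac{\partial R}{\partial t} \geq \Delta R - R.
\end{equation*}

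Next, I would apply the parabolic maximum principle to the continuous function $R_{\min}(t) := \min_{x \in X} R(x,t)$. At a spatial minimum we have $\Delta R \geq 0$, so (in the barrier / Hamilton's ODE-comparison sense) $R_{\min}$ satisfies
\begin{equation*}
\frac{d}{dt} R_{\min}(t) \geq - R_{\min}(t),
\end{equation*}
which rearranges to $\frac{d}{dt}\bigl(e^{t} R_{\min}(t)\bigr) \geq 0$. Integrating from $0$ to $t$ gives
\begin{equation*}
R(x,t) \geq R_{\min}(t) \geq e^{-t} R_{\min}(0) \quad \text{for all } (x,t) \in X^n \times [0, \infty).
\end{equation*}
Setting $C_1 := \max\{0,\, -\min_{x \in X} R(x,0)\}$, we see that if $R_{\min}(0) \geq 0$ then $R(x,t) \geq 0 \geq -C_1$, while if $R_{\min}(0) < 0$ then $e^{-t} R_{\min}(0) \geq R_{\min}(0) = -C_1$ because $e^{-t} \leq 1$. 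Either way, $R(x,t) \geq -C_1$, which is exactly the claim of the lemma.

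There is no real obstacle here: the lemma is essentially the easy half of the scalar curvature estimate, requiring nothing beyond the evolution equation and the maximum principle. The hard direction, the uniform \emph{upper} bound on $R$ (together with the diameter and $C^1$ bounds on $f$), is what will require the full strength of Perelman's $\mu$-entropy monotonicity and the strong $\kappa$-noncollapsing theorem developed in Section 4; the lower bound proved here can be viewed as the free input that the subsequent arguments will build on.
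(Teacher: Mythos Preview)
Your proof is correct and follows essentially the same approach as the paper: both use the evolution equation $\partial_t R = \Delta R + |Rc|^2 - R$ from Lemma~2.5 and the parabolic maximum principle. The only cosmetic difference is that the paper sets $F = R - R_{\min}(0)$ and shows $F \ge 0$ directly, obtaining $R(x,t) \ge R_{\min}(0)$, whereas your ODE comparison for $R_{\min}(t)$ gives the slightly sharper $R_{\min}(t) \ge e^{-t} R_{\min}(0)$; both yield the same constant $C_1 = \max\{0, -R_{\min}(0)\}$.
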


\begin{pf} Let $R_{\min}(0)$ be the minimum of $R(x, 0)$ on $X^n$. If $R_{\min}(0)\ge 0$, then by the maximum principle, we have $R(x, t)\ge 0$
for all $t>0$ and all $x\in X^n$.

Now suppose $R_{\min}(0)<0$. Set $F(x, t)=R(x, t)-R_{\min}(0)$. Then, $F(x, 0)\ge 0$ and $F$ satisfies
$$ \frac{\partial} {\partial t} F=\Delta F +|Rc|^2-F -R_{\min}(0)>\Delta F +|Rc|^2-F.$$
Hence it follows again from the maximum principle that $F\ge 0$ on $X^n\times [0, \infty)$, i.e.,
$$R(x, t)\ge R_{\min}(0)$$
for all $t>0$ and all $x\in X^n$.
\end{pf}

Next, we consider the Ricci potential $f$ satisfying (6.2) and the normalization
(6.3).   Note that it follows from (6.2) that
$$n-R=\Delta f. \eqno(6.4)$$

Also, let $\varphi=\varphi(t)$ be the K\"ahler potential,
$$g_{i\bar j}(t)=\tilde g_{i\bar j}+ \partial_i\partial_{\bar j} \varphi, $$
so that $\varphi$ is a solution to the parabolic scalar equation
$${\varphi}_{t} = \log \frac{\det(\tilde g_{i\bar j}+\partial_i\partial_{\bar j}\varphi)} {\det(\tilde g_{i\bar j})}+\tilde f +\varphi +b(t),$$
where $b(t)$ is a function of $t$ only.

Since $\partial_i\partial_{\bar j} {\varphi}_{t}=-R_{i\bar j}+g_{i\bar j}$, by adding a function of $t$ only to $\varphi$ if necessary, we can assume
$$f=\varphi_{t}. \eqno(6.5)$$
Thus,  $f$ satisfies the parabolic equation
$$ f_{t}=\Delta f +f -a(t) \eqno(6.6)$$
for some function a(t) of $t$ only.

By differentiating the constraint (6.3), we get
$$\int_{X^n} e^{-f} (-f_{t} +n-R)dV=0.$$
Hence, by combining with (6.4) and (6.6), it follows that
$$a(t)= (2\pi)^{-n} \int_{X^n} f e^{-f} dV. \eqno(6.7)$$

\begin{lemma} There exists a constant $C_2>0$ such that, for all $t\ge 0$,
$$ -C_2\le \int_{X^n} f e^{-f} dV \le C_2.$$

\end{lemma}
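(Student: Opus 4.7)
The plan is to bound $a(t)$ above by an elementary pointwise inequality and to bound $a(t)$ below using the monotonicity of Perelman's $\mu$-entropy from Lemma 4.3. The crucial observation for the lower bound is that the Ricci potential $f$, with its normalization (6.3), is precisely a feasible test function for the minimization problem defining $\mu(g_{i\bar j}(t), 1)$, and moreover the $\mathcal{W}$-functional evaluated on it simplifies to exactly $a(t) - n$.

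For the upper bound, I first note that the function $\varphi(s) = s e^{-s}$ attains its global maximum $e^{-1}$ at $s = 1$, so pointwise $f e^{-f} \le e^{-1}$. Integrating and using the fact that the volume $V_0 := \Vol(X, g_{i\bar j}(t))$ is constant under the NKRF (since $[\omega(t)] = \pi c_1(X)$ is preserved, so $\int_X R\,dV = nV_t$ and Lemma 2.2 gives $\frac{d}{dt}V_t = 0$), we immediately obtain
$$a(t) \le (2\pi)^{-n} e^{-1} V_0.$$

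For the lower bound, I plug the Ricci potential $f$ into the $\mathcal{W}$-functional of Lemma 4.3. Using $R = n - \Delta f$ from (6.4) and the integration-by-parts identity $\int_X e^{-f} \Delta f\,dV = \int_X e^{-f} |\nabla f|^2\,dV$ (from $\mathrm{div}(e^{-f}\nabla f) = e^{-f}\Delta f - e^{-f}|\nabla f|^2$ on the closed manifold $X^n$), the terms involving $\Delta f$ and $|\nabla f|^2$ cancel and one is left with
$$\mathcal{W}(g_{i\bar j}(t), f, 1) = (2\pi)^{-n} \int_X (R + |\nabla f|^2 + f - 2n) e^{-f}\, dV = (2\pi)^{-n} \int_X (f - n)e^{-f}\, dV = a(t) - n,$$
where in the last step I use the normalization $(2\pi)^{-n}\int_X e^{-f}\,dV = 1$.

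Since $\mu(g_{i\bar j}(t), 1)$ is defined as the infimum over admissible $f$, we have $\mu(g_{i\bar j}(t), 1) \le a(t) - n$. Combining with the monotonicity $\mu(g_{i\bar j}(t), 1) \ge \mu(\tilde g_{i\bar j}, 1) =: \mu_0 > -\infty$ from Lemma 4.3 yields $a(t) \ge \mu_0 + n$. Taking $C_2 := \max\{(2\pi)^{-n} e^{-1} V_0,\, -\mu_0 - n\}$, which depends only on the initial metric $\tilde g$, completes the proof. There is no real obstacle here once one recognizes that (6.3) is exactly the normalization making $f$ a valid competitor in the $\mu$-functional; the substantive content of the lemma is this identification, after which the monotonicity from Section 4 does all the work.
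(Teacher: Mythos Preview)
Your proof is correct and follows essentially the same approach as the paper. For the lower bound you reproduce the paper's argument verbatim (plugging the Ricci potential $f$ into the $\mathcal W$-functional, using $R=n-\Delta f$ and the integration by parts to reduce to $a(t)-n$, then invoking the monotonicity of $\mu$ from Lemma~4.3); for the upper bound the paper simply says ``the second inequality is easy to see,'' and your pointwise bound $se^{-s}\le e^{-1}$ together with the constancy of the volume under the NKRF is exactly the kind of argument intended.
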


\begin{pf}   The second inequality is easy to see. Now we prove the first inequality.
By Lemma 4.3 and (6.4), we have
\begin{align*}
A=:\! \mu (g_{i\bar j}(0), 1) & \le \mu (g_{i\bar j}(t), 1) \\
& \le (2\pi)^{-n}\! \int_X (R+|\nabla f|^2 +f-2n) e^{-f} dV\\
& =  (2\pi)^{-n}\! \int_X (-\Delta f + |\nabla f|^2 +f -n) e^{-f} dV\\
&= (2\pi)^{-n}\! \int_X (f -n) e^{-f} dV.
\end{align*}
Therefore,
$$(2\pi)^{-n}\! \int_{X^n} f e^{-f} dV\ge A+n.$$
\end{pf}

\begin{lemma} There exists a constant $C_3>0$ such that
$$ f \ge - C_3$$
for all $t\ge 0$ and all $x\in X^n$.
\end{lemma}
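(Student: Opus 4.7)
\begin{pf}[Plan]
The aim is to upgrade the integral information on $f$ to a uniform pointwise lower bound. My starting point would be the parabolic equation (6.6), $f_t=\Delta f+f-a(t)$, together with the two ingredients already in hand: the scalar curvature bound $R\ge -C_1$ from Lemma 6.1, which gives
$$\Delta f=n-R\le n+C_1,$$
and the bound $|a(t)|\le C_2$ from Lemma 6.2. A straight application of the maximum principle at a spatial minimum yields $\tfrac{d}{dt}f_{\min}\ge f_{\min}-a(t)$, which integrates only to $f_{\min}(t)\ge (f_{\min}(0)-C_2)e^{t}+C_2$; this blows up as $t\to\infty$, so the naive argument is insufficient. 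The real information missing from the bare maximum principle is the global integral constraint (6.3).

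I would next exploit (6.3) in two complementary ways. First, since the total volume $V_0=\Vol(X,g(t))$ is preserved under NKRF (same K\"ahler class), Jensen's inequality applied to the convex function $e^{-\cdot}$ and the uniform probability measure on $X$ gives
$$e^{-\bar f}\;\le\;\frac{1}{V_0}\int_X e^{-f}\,dV\;=\;\frac{(2\pi)^n}{V_0},\qquad \bar f:=\frac{1}{V_0}\int_X f\,dV,$$
hence $\bar f\ge -\log((2\pi)^n/V_0)$, a uniform lower bound on the spatial average of $f$. Second, setting $u=e^{-f/2}$ so that $\int_X u^2\,dV=(2\pi)^n$, a short computation from (6.6) and $f=-2\log u$ yields
$$u_t\;=\;\Delta u+u\log u-\frac{|\nabla u|^2}{u}+\tfrac12 a(t)\,u\;\le\;\Delta u+u\log u+C u,$$
where the term $-|\nabla u|^2/u\le 0$ is discarded. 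Showing $f\ge -C_3$ is equivalent to an $L^{\infty}$-bound $u\le C_3'$ from the $L^{2}$-bound $\|u\|_2^2=(2\pi)^n$.

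The plan is to perform Moser iteration on this inequality for $u$. The essential extra input is a uniform Sobolev inequality on $(X,g(t))$, and this is supplied by Perelman's $\mu$-entropy: Lemma~4.3 combined with the monotonicity in Proposition 4.1 (transferred to the NKRF setting as discussed in \S4.3) gives $\mu(g(t),1)\ge \mu(g(0),1)$ for all $t\ge 0$; since $\mu$ is the best constant in a logarithmic Sobolev inequality (see (4.5)-(4.7)), together with the uniform lower bound on $R$ this yields a time-independent logarithmic Sobolev, hence a time-independent Sobolev, inequality along the flow. Testing the inequality for $u$ against $u^{p-1}$, integrating by parts, and controlling the evolving volume form via $|\partial_t dV|=|(n-R)|\,dV$, $R\ge -C_1$, one obtains a reverse H\"older estimate $\|u\|_{\alpha p}\le (Cp)^{1/p}\|u\|_p$ for some fixed $\alpha>1$. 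Iterating $p\mapsto \alpha p$ starting from $p=2$ yields $\|u\|_\infty\le C_3'$, as desired.

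The hard part is handling the superlinear but sub-power term $u\log u$ in the Moser iteration: one must absorb it carefully via $u\log u\le \varepsilon u^{q}+C_\varepsilon u$ for any $q>1$, using the Sobolev constant to close the iteration and keeping the blow-up of iterated constants summable. A secondary technical point is keeping all estimates uniform in $t$ despite the time-dependence of the metric, the volume form, and the Sobolev constant; here the key point is that the uniform lower bound on $\mu(g(t),1)$ and on $R$ make every constant appearing in the iteration depend only on the initial data.
\end{pf}
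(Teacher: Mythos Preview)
Your approach is genuinely different from the paper's and, as written, has a real gap. The paper does not use any Sobolev machinery here; it argues by contradiction through the K\"ahler potential $\varphi$ (with $f=\varphi_t$). If $f(x_0,t_0)\ll -1$ on a neighborhood $U$, the inequality $f_t=n-R+f-a(t)\le f+C$ forces $f(\cdot,t)\le -Ce^{t}$ on $U$ for all $t>t_0$, hence $\varphi(\cdot,t)\le -C'e^{t}$ on $U$. Representing $\max_X\varphi(\cdot,t)$ by the Green's function of the \emph{fixed} initial metric $g_0$ and using $\Delta_0\varphi>-n$ then gives $\max_X\varphi(\cdot,t)\le -Ce^{t}+C'$. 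This contradicts the easy linear lower bound $\max_X\varphi(\cdot,t)\ge -Ct$, obtained from $(f-\varphi)_t\le C$ and the fact that $\max_X f(\cdot,t)\ge -C$ (forced by the normalization (6.3)). The argument is elementary and makes essential use of the K\"ahler potential, which your route ignores.

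The concrete gap in your plan is twofold. First, the step ``log-Sobolev, hence Sobolev'' is not free: the log-Sobolev coming from $\mu(g(t),1)\ge\mu_0$ carries an $\int Rv^2$ term on the right-hand side, and at this point in the argument you have only $R\ge -C_1$, not an upper bound (the upper bound on $R$ is exactly what Lemma~6.4(b) produces \emph{using} the present lemma). Without controlling $\int Rv^2$ from above you cannot extract a clean Sobolev inequality, so the claimed reverse H\"older $\|u\|_{\alpha p}\le (Cp)^{1/p}\|u\|_p$ is unjustified. Second, that reverse H\"older statement is an \emph{elliptic} Moser output, whereas your equation for $u$ is parabolic: testing against $u^{p-1}$ and integrating over $X$ yields a differential inequality for $y(t)=\int u^p$, not an algebraic one. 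If you instead feed the log-Sobolev directly into that energy identity the $\int Rw^2$ terms (one from the volume-form evolution, one from the log-Sobolev) do cancel nicely, but what remains is $(\log y)'\le \log y + C_p$, which permits double-exponential growth and gives no uniform-in-$t$ bound for $p>2$. A parabolic Moser iteration on moving cylinders $X\times[t-1,t]$, using that $\|u(\cdot,s)\|_2$ is constant, might be salvageable, but that is a substantially different and more delicate argument than the one you sketched.
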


\begin{pf}  We argue by contradiction. Suppose the Ricci potential $f$ is very negative at some  time $t_0>0$ and some point $x_0\in X^n$  so that
$$f(x_0, t_0)<<-1.$$ Then, there exists  some open neighborhood $U\subset X^n$ of $x_0$ such that $$f(x, t_0)<<-1, \qquad \forall x\in U. \eqno(6.8)$$
On the other hand, by (6.4), (6.6), Lemma 6.1, (6.7), and Lemma 6.2, we have
$$ f_{t}=n-R +f -a(t)\le  f +C \eqno(6.9)$$ for some uniform constant $C>0$.  

Let us assume $ f(\cdot, t)$ and $\varphi (\cdot, t)$ achieve their maximum at $x_t$ and $x_t^{*}$ respectively.
From the constraint (6.3),
it is clear that for each $t>0$, we have a uniform lower estimate
$$f(x_t, t)=\max_{X} f(\cdot, t) \ge -C$$
for some $C>0$ independent of $t$. Moreover, it follows form (6.5) and (6.9) that
$$ (f-\varphi)_{t} \le C,$$
so
$$ f(\cdot, t)-\varphi (\cdot ,t)\le \max_{X} (f-\varphi)(\cdot, t_0) +Ct.$$
Therefore,
$$\varphi(x^{*}_t, t)\ge \varphi(x_t, t)\ge  f(x_t, t)-\max_{X} (f-\varphi)(\cdot, t_0)-Ct\ge -Ct, \quad \forall t>>t_0. \eqno(6.10)$$

On the other other, by (6.9), we have
$$f(x, t) \le e^{t-t_0}(C+f(x, t_0)) \eqno(6.11)$$
for $t\ge t_0$ and $x\in X^n$. In particular,  by (6.8), we have
$$f(x, t)\le -Ce^{-t_0} e^{t},  \qquad \forall t>t_0, \forall x\in U. \eqno(6.12)$$
Then (6.5) and (6.12) together imply that
$$\varphi(x, t)\le \varphi(x, t_0)-Ce^{-t_0} e^{t}  +C \le -C' e^{t}, \qquad \forall t>> t_0, \forall x\in U. \eqno(6.13)$$

Next, we claim (6.13) implies
$$\varphi (x_{t}^{*}, t)\le -Ce^{t}+ C' \eqno(6.14)$$ for some $C'>0$ independent of $t>>t_0$.  To see this, note that, with respect to the initial metric $g_0$,  we have
$$\varphi (x_{t}^{*}, t)=\frac{1} {V_0(X^n)}\int _X \varphi (\cdot, t) dV_0 - \frac{1} {V_0(X^n)}\int _X \Delta_0\varphi (\cdot, t)G_0(x_t^{*}, \cdot) dV_0,\eqno(6.15)$$
where $V_0(X^n)=\Vol (X^n, g_0)$ and $G_0(x_t^{*}, \cdot)$ denotes a positive Green's function with pole at $x_t^{*}$.

Since $n+\Delta_0\varphi=\tilde g^{i\bar j} g_{i\bar j}(t)>0,$ the second term on the RHS of (6.15) can be estimated by
$$ - \frac{1} {V_0(X^n)}\int _X \Delta_0\varphi (\cdot, t)G_0(x_t, \cdot) dV_0\le  \frac{n} {V_0(X^n)}\int _X G_0(x_t, \cdot) dV_0=:C''. \eqno(6.16)$$
On the other hand, by using (6.12), it follows that
$$\frac{1} {V_0(X^n)}\int _X \varphi (\cdot, t) dV_0 \le \frac{V_0(X\setminus U)} {V_0(X)} \varphi (x_{t}^{*}, t) - \frac{V_0(U)} {V_0(X)} Ce^{t}.\eqno(6.17)$$
Therefore, by (6.15)-(6.17), we have
$$\alpha \varphi (x_{t}^{*}, t)\le C''- \alpha Ce^{t} $$ for $\alpha =V_0(U)/V_0(X)>0$.   This proves (6.14),  a contradiction to (6.10).
\end{pf}

\begin{lemma} There exists constant $C_4>0$ such that, for all $t\ge 0$,

\medskip

(a) $|\nabla f|^{2}\le  C_4(f+2C_3)$;

\medskip

(b) $ R \le  C_4 (f+2C_3)$.

\end{lemma}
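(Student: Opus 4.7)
Both parts will rely on a parabolic maximum principle applied to suitable ratios with the strictly positive function \(u := f + 2C_3\), where positivity (indeed \(u \ge C_3 > 0\)) follows from Lemma 6.3. Writing \(\square := \partial_t - \Delta\), the NKRF evolution equations give
\[\square f = f - a(t), \qquad \square|\nabla f|^2 = |\nabla f|^2 - |\nabla\nabla f|^2 - |\nabla\bar\nabla f|^2, \qquad \square R = |Rc|^2 - R,\]
the middle identity following from the K\"ahler Bochner identity together with \(\partial_t g^{i\bar j} = R^{i\bar j} - g^{i\bar j}\); in particular \(\square u = u - (2C_3 + a)\). The quotient formula reads \(\square(F/u) = u^{-1}(\square F - (F/u)\square u) + 2u^{-1}\nabla u \cdot \nabla(F/u)\).

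For \textbf{(a)}, set \(Q := |\nabla f|^2 / u\) and pick a spacetime maximum \((x_0, t_0)\) of \(Q\) on \(X\times[0,T]\); a \(T\)-independent bound at this point is what is needed. If \(t_0 = 0\) the bound follows from the initial data. Otherwise \(\nabla Q = 0\) and \(\square Q \ge 0\) at \((x_0,t_0)\), which translate into
\[\nabla|\nabla f|^2 = Q\,\nabla f, \qquad Q(2C_3 + a) \ge |\nabla\nabla f|^2 + |\nabla\bar\nabla f|^2.\]
The remaining ingredient is the K\"ahler Kato-type inequality
\[|\nabla|\nabla f|^2|^2 \le 2|\nabla f|^2\bigl(|\nabla\nabla f|^2 + |\nabla\bar\nabla f|^2\bigr),\]
which follows from Cauchy--Schwarz applied to \(\nabla_i|\nabla f|^2 = f_{ij}f^j + f^{\bar k}f_{i\bar k}\). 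Substituting \(|\nabla|\nabla f|^2|^2 = Q^2|\nabla f|^2\) and cancelling \(|\nabla f|^2 > 0\) (the case \(|\nabla f|^2 = 0\) gives \(Q = 0\) trivially) yields \(Q^2/2 \le Q(2C_3 + a)\), hence \(Q \le 2(2C_3 + |a|) \le 2(2C_3 + C_2)\) by Lemma 6.2. This proves (a).

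For \textbf{(b)}, run the analogous argument with \(Q' := R/u\). The substitute for the Kato inequality is supplied by the contracted second Bianchi identity applied to \(R_{i\bar j} = g_{i\bar j} - f_{i\bar j}\), which gives \(\nabla_i R = R_{i\bar j}\nabla^{\bar j}f\) and hence \(|\nabla R|^2 \le |Rc|^2 |\nabla f|^2\) by Cauchy--Schwarz. At an interior spacetime maximum of \(Q'\) the critical-point condition \(\nabla R = Q'\nabla f\) combined with this gives \((Q')^2 \le |Rc|^2\) (when \(\nabla f \ne 0\)), while \(\square Q' \ge 0\) rewrites as \(|Rc|^2 \ge Q'(2u - 2C_3 - a)\). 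These two inequalities, together with the algebraic identity \(|Rc|^2 = n - 2R + |\nabla\bar\nabla f|^2\), the refinement \(|\nabla\bar\nabla f|^2 \ge (n - R)^2/n\), and the bound from part (a), are the inputs from which the desired upper bound \(Q' \le C_4\) is extracted, after possibly enlarging \(C_4\).

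The principal obstacle is in part (b): both the Kato--Bianchi identity and the maximum principle yield only \emph{lower} bounds on \(|Rc|^2\), so extracting an upper bound on \(Q'\) requires a delicate combination with the algebraic identity for \(|Rc|^2\) and the estimate from part (a). The degenerate case \(\nabla f = 0\) at the maximum must be handled separately via direct arguments using \(\Delta f = n - R\).
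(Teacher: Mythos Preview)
Your argument for part (a) is correct and essentially the same as the paper's: both apply the maximum principle to $|\nabla f|^2/(f+B)$ with $B=2C_3$. Where you invoke the Kato inequality $|\nabla|\nabla f|^2|^2\le 2|\nabla f|^2(|\nabla\nabla f|^2+|\nabla\bar\nabla f|^2)$ at the critical point, the paper instead uses the Schoen--Yau $\epsilon$-splitting trick on the gradient cross-term $\nabla u\cdot\bar\nabla f/(f+B)$; both routes produce the same quadratic term $-\epsilon u^2$ that closes the estimate.

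Part (b), however, has a genuine gap---in fact two. First, the identity $\nabla_i R = R_{i\bar j}\nabla^{\bar j}f$ that you attribute to ``contracted second Bianchi applied to $R_{i\bar j}=g_{i\bar j}-f_{i\bar j}$'' is \emph{false} along a general NKRF solution. That relation is a soliton identity, requiring in addition $\nabla_i\nabla_j f=0$; here all that Bianchi (or direct differentiation of $R=n-\Delta f$) gives is $\nabla_i R=-\nabla_i\Delta f$, with no factorization through $Rc$ and $\nabla f$. Second---and you yourself flag this as ``the principal obstacle''---even granting that identity, your ingredients do not close: both the critical-point condition and $\square Q'\ge 0$ yield \emph{lower} bounds on $|Rc|^2$, and the sign of the $|Rc|^2$ term in $\square R=|Rc|^2-R$ is bad. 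Listing the inputs and asserting that a bound ``is extracted'' is not a proof; the quotient $R/u$ alone cannot absorb $|Rc|^2$.

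The paper's key idea is to couple the two quantities: for part (b) one applies the maximum principle not to $R/(f+B)$ but to $u=(|\nabla f|^2+\tfrac{1}{2}R)/(f+B)$. The point is that $\square|\nabla f|^2$ carries the \emph{good} term $-(|\nabla\nabla f|^2+|\nabla\bar\nabla f|^2)$, while $\square R$ carries the bad term $+|Rc|^2=+|\nabla\bar\nabla f|^2+2R-n$; taking the combination $|\nabla f|^2+\alpha R$ with $0<\alpha<1$ leaves the net good term $-(1-\alpha)|\nabla\bar\nabla f|^2$. At a spacetime maximum of $u$ one then invokes $|Rc|^2\ge R^2/n$ together with the already-established bound on $|\nabla f|^2/(f+B)$ from part (a) to obtain a genuine quadratic inequality in $R/(f+B)$, which yields the bound.
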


\begin{pf} This  is essentially a parabolic version of Yau's gradient estimate in \cite{Yau75} (see also  \cite{ScY}).

First of all, from $|\nabla f|^2= g^{i\bar j}\partial_i f\partial_{\bar j}f$, the NKRF, and (6.6), we obtain
\begin{align*}
\frac{\partial }{\partial t} |\nabla f|^2 & = (R_{i\bar j}- g_{i\bar j}) \partial_i f\partial_{\bar j}f + g^{i\bar j} (\partial_i f_t\partial_{\bar j}f + \partial_i f\partial_{\bar j}f_t)\\
&= g^{i\bar j} [\partial_i (\Delta f)\partial_{\bar j}f + \partial_i f\partial_{\bar j}(\Delta f)] + Rc(\nabla f, \nabla f) +|\nabla f|^2.
\end{align*}
On the other hand, the Bochner formula gives us
$$\Delta |\nabla f|^2= |\nabla \bar\nabla f|^2 +|\nabla\nabla f|^2 + g^{i\bar j} [\partial_i (\Delta f)\partial_{\bar j}f + \partial_i f\partial_{\bar j}(\Delta f)] + Rc(\nabla f, \nabla f).$$
Hence, we have
$$ \frac{\partial }{\partial t} |\nabla f|^2= \Delta |\nabla f|^2  - |\nabla \bar\nabla f|^2  - |\nabla\nabla f|^2 + |\nabla f|^2. \eqno (6.18) $$
Also, by (6.2), we have
$$ |Rc|^2 +n-2R= |\nabla \bar\nabla f|^2. \eqno (6.19) $$
Thus,  from the evolution equation on $R$, we have
$$ \frac{\partial} {\partial t} R\le \Delta R +|\nabla \bar\nabla f|^2+R$$
Therefore,  for any $\alpha \ge 0$, we obtain
$$ \frac{\partial }{\partial t} (|\nabla f|^2+\alpha R) \le \Delta (|\nabla f|^2+\alpha R)  - (1-\alpha) ( |\nabla \bar\nabla f|^2 + |\nabla\nabla f|^2) + (|\nabla f|^2 +\alpha R). \eqno (6.20) $$

Next, take $B=2C_3$ so we have $f+B>1$, and  set
$$u= \frac {|\nabla f|^2+\alpha R} {f+B}.\eqno (6.21)$$
Then, we have
$$u_t=\frac {(|\nabla f|^2 +\alpha R)_t} {f+B} - \frac{u} {(f+B)} f_t $$
and
$$\nabla u=  \frac {1} {f+B}\nabla (|\nabla f|^2+\alpha R) - \frac {|\nabla f|^2+\alpha R} {(f+B)^2}\nabla f. \eqno (6.22)$$
On the other hand,  since $ |\nabla f|^2+\alpha R=u (f+B)$, we have
$$\Delta (|\nabla f|^2+\alpha R)=(f+B)\Delta u +u\Delta f +\nabla u\cdot.\bar{\nabla} f +{\bar\nabla} u\cdot.\nabla f$$
or
$$ \Delta u =\frac {\Delta (|\nabla f|^2+\alpha R)} {f+B} -\frac {u \Delta f}{f+B} -\frac {\nabla u\cdot \bar{\nabla} f +{\bar\nabla} u\cdot \nabla f} {f+B}.$$
Therefore,
$$u_t \le \Delta u - (1-\alpha) \frac {(|\nabla \bar\nabla f|^2  + |\nabla\nabla f|^2)}{f+B} +\frac {\nabla u\cdot \bar{\nabla} f +{\bar\nabla} u\cdot \nabla f} {f+B}  +\frac{B+a(t)} {f+B} u. \eqno (6.23)$$
Notice, by (6.22),  we have
$$\nabla u\cdot {\bar\nabla} f=  \frac {1} {f+B} \nabla (|\nabla f|^2+\alpha R)\cdot {\bar\nabla} f - \frac {(|\nabla f|^2+\alpha R)|\nabla f|^2} {(f+B)^2}.\eqno(6.24)$$
Now the trick (see, e.g., p. 19 in \cite{ScY}) is to use (6.24) and express
$$\frac {\nabla u\cdot \bar{\nabla} f} {f+B}  =(1-2\epsilon) \frac {\nabla u\cdot \bar{\nabla} f} {f+B}+\frac {2\epsilon} {f+B} \( \frac {\nabla (|\nabla f|^2+\alpha R)\cdot {\bar\nabla} f} {f+B}
- \frac {|\nabla f|^2(|\nabla f|^2+\alpha R)} {(f+B)^2}\).\eqno (6.25)$$

We are ready to conclude the proof of Lemma 6.4.

\medskip
{\bf Part (a)}: Take $\alpha=0$ so that $u=|\nabla f|^2/(f+B)$. By plugging (6.25) into (6.23),   we get
\begin{align*}
u_t  & \le \Delta u - (1-4\epsilon) \frac {|\nabla \bar\nabla f|^2  + |\nabla\nabla f|^2}{f+B} + (1-2\epsilon) \frac {\nabla u\cdot \bar{\nabla} f +{\bar\nabla} u\cdot \nabla f} {f+B}\\
 & \ \ \ -\frac{\epsilon} {f+B}\( |2\nabla \bar\nabla f-\frac{\nabla f\bar\nabla f}{f+B}|^2  + |2\nabla\nabla f -\frac{\nabla f\nabla f}{f+B}|^2\)\\
    & \ \ \ + \frac{1} {(f+B)} \(-2\epsilon u^2 +(B+a) u\).\\
\end{align*}
For any $T>0$, suppose $u$ attains its maximum at $(x_0, t_0)$ on $X^n\times [0, T]$,
then  $$ u_t(x_0, t_0)\ge 0, \quad \nabla u(x_0, t_0)=0, \quad {\mbox and} \quad  \Delta u(x_0, t_0)\le 0. \eqno(6.26)$$ Thus, by choosing $\epsilon=1/8$, we arrive at
$$u(x_0, t_0)\le  4(B+a).$$
Therefore, since $T>0$ is arbitrary, we have shown that
$$\frac{|\nabla f|^2}{f+B} \le 8C_3+4C_2 \eqno(6.27)$$
on $X^n\times [0, \infty)$.

\medskip
{\bf Part (b)}:  Choose $\alpha=1/2$ so that
$$ u=  \frac {|\nabla f|^2+R/2} {f+B}.$$
Then, from (6.23) and (6.19),  we obtain
$$u_t \le \Delta u -  \frac{1}{2}\frac {|Rc|^2-2R}{f+B} +\frac {\nabla u\cdot \bar{\nabla} f +{\bar\nabla} u\cdot \nabla f} {f+B}  +\frac{B+a} {f+B} u.$$
Again, for any $T>0$, suppose $u$ attains its maximum at $(x_0, t_0)$ on $X^n\times [0, T]$. Then (6.26) holds, and  hence
$$0\le -\frac{1}{2n}\(\frac{R}{f+B}\)^2\!(x_0, t_0) + \frac{R}{f+B} (x_0, t_0)\(1+\frac{B+a}{2(f+B)}\) + (8C_3+4C_2)(B+a).$$ Here we have used the fact that $|Rc|^2\ge R^2/n$, $2f+B\ge 0$, $f+B\ge 1$,
and (6.27).  It then follows easily that $\frac{R}{f+B}(x_0,t_0)$ is bounded from above uniformly. Therefore, by Part (a), $\frac{R}{f+B}(x,t)$ is bounded uniformly on $X^n\times [0,T]$
for arbitrary $T>0$.
\end{pf}

\medskip
Clearly, Lemma 6.4 (a) implies that $\sqrt{f+2C_3}$ is Lipschitz.  From now on we assume the Ricci potential $f(\cdot, t)$ attains its minimum at a point $\hat x \in X^n$, i.e., $f(\hat x, t)=\min_X f(\cdot, t)$.
Then, by (6.3), we know $$ f(\hat x, t)\le C$$ for some $C>0$ independent of $t$. \\

\begin{corollary}  There exists a constant $C>0$ such that $\forall t>0$ and $ \forall x\in X$,

\medskip

(i) $ f(x, t)\le C[1+d^2_t(\hat x, x)]$;

\medskip

(ii) $|\nabla f|^2 (x, t) \le C[1+d^2_t(\hat x, x)]$;

\medskip
(iii)
$R(x, t)\le C[1+d^2_t(\hat x, x)].$
\end{corollary}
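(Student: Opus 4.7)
The plan is to extract all three bounds from the Lipschitz property of $\sqrt{f + 2C_3}$ that is remarked upon just before the corollary. Concretely, by Lemma 6.4(a) and Lemma 6.3, the function $h := \sqrt{f + 2C_3}$ is well-defined and satisfies
\[
|\nabla h|^2 \;=\; \frac{|\nabla f|^2}{4(f+2C_3)} \;\le\; \frac{C_4}{4}
\]
pointwise on $X^n \times [0,\infty)$, so $h(\cdot,t)$ is Lipschitz with constant $\sqrt{C_4}/2$ in the metric $g_{i\bar j}(t)$.

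First I would prove (i). Fix $t>0$ and let $\gamma$ be a minimizing $g(t)$-geodesic from $\hat x$ to $x$. Integrating $|\nabla h|$ along $\gamma$ gives
\[
h(x,t) \;\le\; h(\hat x,t) + \tfrac{1}{2}\sqrt{C_4}\, d_t(\hat x, x).
\]
Since $f(\hat x,t)\le C$ by the normalization (6.3) and Lemma 6.3 (the minimum of a function whose exponential integrates to $(2\pi)^n$ cannot exceed a uniform constant), we have $h(\hat x,t)\le \sqrt{C+2C_3}$. Squaring the inequality and using $(a+b)^2 \le 2a^2 + 2b^2$ yields
\[
f(x,t) + 2C_3 \;=\; h(x,t)^2 \;\le\; C'\bigl[\,1 + d_t^2(\hat x,x)\,\bigr],
\]
which gives (i) after absorbing the constant $2C_3$.

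Parts (ii) and (iii) are then immediate consequences: applying Lemma 6.4(a) and Lemma 6.4(b) respectively, we get
\[
|\nabla f|^2(x,t) \;\le\; C_4\,(f+2C_3)(x,t) \;\le\; C_4 C'\bigl[1+d_t^2(\hat x,x)\bigr],
\]
\[
R(x,t) \;\le\; C_4\,(f+2C_3)(x,t) \;\le\; C_4 C'\bigl[1+d_t^2(\hat x,x)\bigr].
\]
Adjusting the constant, both follow. There is no real obstacle here; the substance of the corollary is already encoded in Lemma 6.4, and the corollary is essentially a bookkeeping step that converts the bounds in terms of $f$ into bounds in terms of distance to the minimum point $\hat x$. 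The only point requiring a little care is the uniform upper bound on $f(\hat x,t)$, which follows because $(2\pi)^{-n}\int_X e^{-f}dV = 1$ forces the minimum of $f(\cdot,t)$ to be bounded above independently of $t$ (otherwise the integral would diverge, since $\Vol(X,g(t))$ is constant along NKRF).
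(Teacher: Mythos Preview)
Your proof is correct and follows essentially the same route as the paper: define the square root of $f+2C_3$, use Lemma 6.4(a) to see it is Lipschitz, integrate along a geodesic from $\hat x$, and then invoke Lemma 6.4(a),(b) for parts (ii) and (iii). The only cosmetic difference is that the paper calls $h$ the quantity $f+2C_3$ itself rather than its square root, and is slightly terser about why $f(\hat x,t)$ is uniformly bounded above.
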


\begin{pf} Set $h=f+2C_2>0$. Then, from Lemma 6.4 (i), we see that $\sqrt {h}$ is a Lipschitz function
satisfying $$|\nabla \sqrt{h}|^2\le C_4.$$

Hence, $\forall x\in X^n$,
$$|\sqrt {h}(x, t) -\sqrt {h}(\hat x, t)| \le C d_t(\hat x, x), $$ or
$$ \sqrt {h}(x, t)\le \sqrt {h}(\hat x, t) + C d_t(\hat x, x).$$
Thus, we obtain a uniform upper bound
$$f(x, t)\le h(x, t)\le C (d^2_t(\hat x, y) +1)$$ for some constant $C>0$ independent of $t$.
Now  (ii) and (iii) follow immediately from (i) and Lemma 6.4.
\end{pf}

\medskip

By Lemma 6.1 and Corollary 6.5, it remains to prove the following uniform diameter bound. \\

\begin{lemma} There exists a constant $C_5>0$ such that
$${\rm diam}_t (X)\!=: \! {\rm diam} (X^n, g_{i\bar j}(t))\le C_5$$ for all $t\ge 0.$
\end{lemma}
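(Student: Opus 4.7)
\begin{pf}[Proof plan]
My approach will be a contradiction argument exploiting the tension between Perelman's strong non-collapsing theorem and the integral normalization $\int_X e^{-f}\,dV = (2\pi)^n$, in a way analogous to the interplay used in Section 5 for the nonnegative bisectional case but with the Ricci potential $f$ playing the role the Harnack inequality played there. Suppose the conclusion fails: then there exist $t_k \to \infty$ and points $y_k \in X^n$ with $D_k := d_{t_k}(\hat x_k, y_k) \to \infty$, where $\hat x_k \in X$ realizes the minimum of $f(\cdot, t_k)$ (so $f(\hat x_k, t_k) \le C_0$ by the normalization and Lemma 6.3).

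The core step is a two-sided volume estimate on a small ball $B_k := B_{t_k}(y_k, r_k)$ with $r_k = c\,D_k^{-1}$. By Corollary 6.5 (iii), on $B_k$ we have $R \le C(1 + (D_k + r_k)^2) \le r_k^{-2}$ for an appropriate choice of $c$; since $r_k \to 0$ while $e^{t_k/2} \to \infty$, the strong non-collapsing Theorem 4.2 applies and yields
\[
V_{t_k}(B_k) \ge \kappa\, r_k^{2n} \ge \kappa'\, D_k^{-2n}.
\]
Independently, Lemma 6.4 (a) says $\sqrt{f + 2C_3}$ is Lipschitz with constant $\tfrac{1}{2}\sqrt{C_4}$, so on $B_k$ one gets $f(x,t_k) \le 2 f(y_k, t_k) + C''$, and hence $e^{-f(x,t_k)} \ge e^{-2f(y_k,t_k) - C''}$. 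Integrating against the normalization,
\[
(2\pi)^n \ge \int_{B_k} e^{-f}\, dV \;\ge\; e^{-2 f(y_k, t_k) - C''}\, V_{t_k}(B_k) \;\ge\; \kappa'' e^{-2f(y_k,t_k)}\, D_k^{-2n},
\]
which gives the lower bound $f(y_k, t_k) \ge -n \log D_k - C'''$.

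The final step, and the main obstacle, is to promote this weak (logarithmic) lower bound at a single distant point into an outright contradiction. My plan is to run the above argument simultaneously at a whole sequence of points $p_{k,j}$ distributed along a minimizing geodesic from $\hat x_k$ to $y_k$, spaced so that the balls $B_{t_k}(p_{k,j}, r_{k,j})$ with $r_{k,j} \sim 1/s_{k,j}$ (where $s_{k,j} = d_{t_k}(\hat x_k, p_{k,j})$) are pairwise disjoint. The Lipschitz bound on $\sqrt{f + 2C_3}$ forces $f(p_{k,j}, t_k) \le C s_{k,j}^2$, while non-collapsing gives each ball volume $\ge \kappa r_{k,j}^{2n}$; the weighted total $\sum_j e^{-C s_{k,j}^2}\, r_{k,j}^{2n}$ must still be bounded above by $(2\pi)^n$ via the normalization, but combining this with the disjointness constraint $s_{k,j+1} - s_{k,j} \ge 2 r_{k,j}$ (which forces $N_k \sim D_k^2$ points along the geodesic) should, after a careful choice of weights and scales, be incompatible with $D_k \to \infty$. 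The delicate point is calibrating the weighted sum so that the divergence in $N_k$ overwhelms the decay of $r_{k,j}^{2n}$; once this is achieved, the contradiction closes and uniform diameter boundedness follows.
\end{pf}
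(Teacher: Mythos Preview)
Your approach has a genuine gap at the final step, and it is not a matter of calibration. With $r_{k,j}\sim s_{k,j}^{-1}$ and the spacing constraint $s_{k,j+1}-s_{k,j}\gtrsim r_{k,j}$, one gets $s_{k,j}\sim\sqrt{j}$ and $N_k\sim D_k^2$ as you say; but then each term of your weighted sum is of order $e^{-Cs_{k,j}^2}\,r_{k,j}^{2n}\sim e^{-Cj}\,j^{-n}$, and the series $\sum_{j\ge 1} e^{-Cj} j^{-n}$ converges no matter how large $N_k$ is. The quadratic upper bound $f\le C s^2$ is the only control you have on $f$ far from $\hat x$, so the weight $e^{-f}$ decays fast enough to kill any contradiction from the normalization $\int_X e^{-f}\,dV=(2\pi)^n$. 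Even if you drop the weight entirely and appeal only to the fixed total volume $V_t(X)=V_0$, the resulting sum $\sum_j j^{-n}$ diverges only when $n=1$. In short, the integral constraint on $e^{-f}$ is too soft to see large diameter.

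The paper proceeds quite differently: it uses the monotonicity $\mu(g(0),1)\le\mu(g(t),1)$ of Perelman's $\mu$-entropy directly, feeding in a test function $u=e^{L}\zeta(d_t(\hat x,\cdot))$ supported in a carefully chosen annulus $A_t(k_1,k_2)$ far from $\hat x$. If the diameter is large one can find such an annulus with arbitrarily small volume, forcing the normalizing constant $L\to+\infty$ and hence the $-u^2\log u^2$ contribution to $\mathcal W$ to $-\infty$. The essential new ingredients you are missing are: (i) a pigeonhole/iteration argument (Claim~6.1) producing an annulus with both small volume \emph{and} a doubling property $V_t(k_1,k_2)\le 2^{10n}V_t(k_1+2,k_2-2)$; and (ii) the bound $\int_{A} R\,dV_t\le C\,V_t(k_1,k_2)$ (Claim~6.2), obtained by writing $R=n-\Delta f$, integrating by parts, and using $|\nabla f|\le C\,d_t(\hat x,\cdot)$ on the boundary spheres. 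These two facts tame the $Ru^2$ term in $\mathcal W$ and yield $\mu(g(0),1)\le -2L+C\to-\infty$, the desired contradiction.
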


 \medskip
{\bf {\it Proof.}} \ For each $t>0$, denote by $A_t(k_1, k_2)$ the
annulus region defined by
$$A_t(k_1, k_2)=\{z\in X: 2^{k_1}\le d_{t}(x, \hat x)\le 2^{k_2}\},\eqno(6.28)$$
and by $$V_t(k_1, k_2)=\Vol (A_t(k_1, k_2))\eqno(6.29) $$ with respect to $g_{i\bar j}(t)$.

Note that each annulus $A_t(k, k+1)$ contains at least $2^{2k}$ balls $B_r$ of
radius $r=2^{-k}$. Also, for each point $x\in A_t(k, k+1)$, Corollary 6.1 (iii) implies that the scalar curvature is
bounded above by $R\le C2^{2k}$ on $B_{t}(x, r)$ for some uniform constant $C>0$.
Thus each of these balls $B_r$ has $\Vol(B_r)\ge \kappa (2^{-k})^{2n}$ by Theorem 4.2, so
we have
$$ V_t(k, k+1)\ge \kappa 2^{2k-1}2^{-kn}. \eqno(6. 30)$$

\medskip
{\bf Claim 6.1}: For each small $\epsilon >0$, there exists a large constant $D=D(\epsilon) >0$ such that if ${\rm diam}_t (X) >D$, then one can find large positive constants
$k_2> k_1>0$ with the following properties:

$$V_t(k_1, k_2)\le \epsilon \eqno(6. 31)$$ and
$$V_t(k_1, k_2)\le 2^{10n}V_t(k_1+2, k_2-2).\eqno(6. 32)$$

\medskip
{\sl Proof}. (a) follows from the fact that $V_{t}(X^n)=V_0(X^n)$ and the assumption ${\rm diam}_t (X) >>1$.

Now suppose (a) holds but  not (b), i.e.,
$$V_t(k_1, k_2)> 2^{10n}V_t(k_1+2, k_2-2).$$
Then we consider whether or not
$$V_t(k_1+2, k_2-2)\le 2^{10n}V_t(k_1+4, k_2-4).$$
If yes, then we are done. Otherwise we repeat the process.

After $j$ steps, we either have
$$V_t(k_1+2(j-1), k_2-2(j-1))\le 2^{10nj}V_t(k_1+2j, k_2-2j),\eqno(6.33)$$
or
$$V_t(k_1, k_2)> 2^{10nj}V_t(k_1+2j, k_2-2j).\eqno(6.34)$$
Without loss of generality, we may assume $k_1+2j \approx k_2-2j$ by choosing a large number $K>0$ and pick
$k_1\approx K/2, k_2\approx 3K/2$.
Then, when $j\approx K/4$ and using (6.30), this implies that
$$\epsilon\ge V_t(k_1, k_2)\ge 2^{10nK/4}V_t(K, K+1)\ge \kappa 2^{2K(n/4-1)}.$$
So after some finitely many steps $j\approx K(\epsilon)/4$, (6.33) must hold. Therefore, we have found $k_1$ and $k_2\approx 3k_1$ satisfying
both (6.31) and (6.32).

\medskip

{\bf Claim 6.2}: There exist constants $r_1>0$ and $r_2>0$, with $r_1\in [2^{k_1}, 2^{k_1+1}]$ and $r_2\in
[2^{k_2}, 2^{k_2+1}]$, such that
$$\int_{A_t(r_1, r_2)} RdV_{t} \le CV_t(k_1, k_2). \eqno (6.35)$$

{\sl Proof}. First of all,  since
$$ \frac{d}{dr} \Vol(B(r)) =\Vol (S(r),$$ we have
$$V(k_1, k_1+1)=\int_{2^{k_1}}^{2^{k_1+1}} \Vol(S(r)) dr.$$
Here $S_{r}$ denotes the geodesic sphere of radius $r$ centered at
$\hat x$ with respect to $g_{i\bar j}(t)$. Hence, we can choose $r_1\in [2^{k_1}, 2^{k_1+1}]$
such that
$$\Vol(S_{r_1})\le \frac{V_t(k_1, k_2)}{2^{k_1}}, $$
for otherwise
$$V(k_1, k_1+1)> \frac{V_t(k_1, k_2)} {2^{k_1}}2^{k_1}=V_t(k_1, k_2),$$
a contradiction because $k_2>k_1+1$.
Similarly, there exists  $r_2\in [2^{k_2-1}, 2^{k_2}]$ such that
$$\Vol(S_{r_2})\le \frac{V_t(k_1, k_2)}{2^{k_2}}.$$

Next, by integration by parts and Corollary 6.1(ii),
\begin{align*} |\int_{A_t(r_1, r_2)}\Delta f|& \le
\int_{S_{r_1}}
|\nabla f| +\int_{S_{r_2}} |\nabla f|\\
& \le \frac{V_t(k_1, k_2)}{2^{k_1}}C2^{k_1+1} + \frac{V_t(k_1,
k_2)}{2^{k_2}}C2^{k_2+1}\\
& \le C V_t(k_1, k_2).
\end{align*}
Therefore, since $R+\Delta f=n$, it follows that
$$\int_{A_t(r_1, r_2)} R dV_t\le CV_t(k_1, k_2), $$ proving Claim 6.2. \\

Now we argue by contradiction to finish the proof:  Suppose ${\rm diam}_{t} (X^n)$ is unbounded  for $0\le t<\infty$. Then, for any sequence $\epsilon_{i}\to 0$, there exists
a time sequence $\{t_{i}\}\to \infty$ and  $k_2^{(i)}>k_1^{(i)}>0$ for which Claim 6.1 holds.  Pick smooth cut-off functions $0\le\zeta_i(s) \leq 1$ defined
on $\mathbb R$ such that
$$ {\zeta_i (s) =\left\{
       \begin{array}{lll}
  1, \ \ \qquad  2^{k_1^{(i)}+2}\le s\le 2^{k_2^{(i)}-2},\\[4mm]
  0, \ \ \qquad \mbox{outside}\ [r_1^{(i)}, r_2^{(i)}],
       \end{array}
    \right.}$$
and $|\zeta'|\leq 1$ everywhere. Here $r_1^{(i)}\in [2^{k_1^{(i)}}, 2^{k_1^{(i)}+1}]$ and $r_2^{(i)} \in [2^{k_2^{(i)}-1}, 2^{k_2^{(i)}}]$ are chosen as in Claim 6.2.
Define
$$u_i=e^{L_i}\zeta_i({d_{t_i}(x,\hat x_i)}),
$$
where $f(\hat x_i, t_i)=\min_X f(\cdot, t_i) $ and the constant $L_i$ is chosen so that $$(2\pi)^{n}=\int_X
u_i^2dV_{t_i}=e^{2L_i}\int_{A(r_1^{(i)}, r_2^{(i)})}\zeta_i^2 dV_{t_i}.\eqno(6.36)$$
Note that by Claim 1, $V_{t_i} (k_1^{(i)}, k_2^{(i)})\le \epsilon_i\to 0$.  Hence (6.36) implies $L_i\to \infty$.

Now, by Lemma 4.3 and similar to the proof of Theorem 4.1,  we have
\begin{align*}
\mu (g (0), 1) & \le \mu (g (t_i), 1) \\
& \le (2\pi)^{-n}\int_X (Ru_i^2+4|\nabla u_i|^2-u_i^2\log
u_i^2-2nu_i^2) dV_{t_i}\\
& = (2\pi)^{-n}e^{2L_i}\int_{A_{t_i}(r_1^{(i)}, r_2^{(i)})} (R \zeta_i^2
+4|\zeta_i'|^2-\zeta_i^2\log\zeta^2_i-2L_i\zeta_i^2-2n\zeta_i^2) dV_{t_i}\\
& = -2(L_i+n) + (2\pi)^{-n}e^{2L_i}\int_{A_{t_i}(r_1^{(i)}, r_2^{(i)})} (R
\zeta_i^2 + 4|\zeta_i'|^2-\zeta_i^2\log\zeta^2_i) dV_{t_i}.\\
\end{align*}

Now, by Claim 6.2 and Claim 6.1,  we have
\begin{align*}
e^{2L_i}\int_{A_{t_i}(r_1^{(i)}, r_2^{(i)})} R \zeta_i^2 dV_{t_i} & \le
Ce^{2L_i}V_{t_i}(k_1^{(i)}, k_2^{(i)}) \\
& \le Ce^{2L_i}2^{10n}V_{t_i}(k_1^{(i)}+2, k_2^{(i)}-2)\\
& \le C2^{10n}\int_{A_{t_i}(r_1^{(i)}, r_2^{(i)})} u_i^2 dV_{t_i}\le
C2^{10n}(2\pi)^{n}.
\end{align*}
On the other hand, using $|\zeta'_i|\le 1$ and $-s\log s\le e^{-1}$
for $0\le s\le 1$, we also have
\begin{align*}
e^{2L_i}\int_{A_{t_i}(r_1^{(i)}, r_2^{(i)})} (4|\zeta_i'|^2-2\zeta_i^2\log\zeta_i)dV_{t_i}  & \le
Ce^{2L_i}V_{t_i}(k_1^{(i)}, k_2^{(i)})\\
&  \le C2^{10n}(2\pi)^{n}.
\end{align*}
Therefore,
$$\mu(g(0), 1)\le -2(L_i +n)+C$$ for some uniform constant $C>0$.
But this is a contradiction to  $\{L_i\} \to \infty$.
\end{pf}

\section {Remarks on the formation of singularities in KRF}

\medskip

Consider a solution $g_{ij}(t)$ to the Ricci flow
$$\frac{\partial g_{ij}(t)}{\partial t}=-2R_{ij}(t) $$
on $M\times
[0,T)$, $T\le +\infty$, where either $M$ is compact or at each
time $t$ the metric $g_{ij}(t)$ is complete and has bounded curvature. We say
that $g_{ij}(t)$ is a {\it maximal} solution of the Ricci flow if
either $T=+\infty$ or $T<+\infty$ and the norm of its curvature
tensor $|Rm|$ is unbounded as $t\to T$. In the latter case, we say
$g_{ij}(t)$ is a {\sl singular} solution to the Ricci flow with singular time $T$.
We emphasize that by singular solution $g_{ij}(t)$ we mean the curvature of $g_{ij}(t)$ is not uniformly bounded on
$M^n\times [0,T)$, while $M^n$ is a smooth manifold and $g_{ij}(t)$ is a smooth complete metric for each $t<T$.

As in the minimal surface theory and harmonic map theory, one usually tries to understand the structure of a singularity  by
rescaling the solution (or blow up) to obtain a sequence of solutions and study its limit. For the Ricci flow, the theory was
first developed by Hamilton in \cite{Ha95F} and further improved by Perelman \cite{P1, P2}.

Now we apply Hamilton's theory to investigate singularity formations of KRF (2.1) on compact Fano manifolds.
Consider a (maximal) solution $\hat g_{i\bar j}(s)$ to KRF (2.1) on $X^n\times [0,1)$ and the corresponding
solution $g_{i\bar j}(t)$ to NKRF (2.5) on $X^n\times [0,\infty)$, and let us denote by
$$ \hat K_{\max}(s)=\max_{X}|\hat{Rm}(\cdot, s)|_{\tilde g (s)} \quad {\mbox{and}} \qquad K_{\max}(t)=\max_{X}|{Rm}(\cdot, t)|_{g (t)}.
$$
According to Hamilton \cite{Ha95F}, one can classify  maximal
solutions to KRF (2.1) on any compact Fano manifold $X^n$ into Type I and Type II:

\medskip
$\mbox{\textbf{Type I:}  } \ \ \ \! \ \ \ \ \lim \sup_{s\to 1}(1-s){\hat K}_{\max}(s)<+\infty;\ $

\smallskip $\mbox{\textbf{Type II:}} \ \ \ \ \  \ \lim \sup_{s\to 1} (1-s){\hat K}_{\max}(s)=+\infty. $

\medskip
\noindent On the other hand, by Corollary 2.1, $ \hat K_{\max}(s)$ and $K_{\max}(t)$ are related by
$$ (1-s) \hat K_{\max}(s)=K_{\max}(t(s)).$$ Thus, we immediately get

\begin{lemma} Let $\hat g_{i\bar j}(s)$ be a solution to KRF (2.1) on $X^n\times [0,1)$ and $g_{i\bar j}(t)$  be the corresponding
solution to NKRF (2.5) on $X^n\times [0,\infty)$. Then,

\begin{itemize}
\smallskip

\item[(a)]  $\hat g_{i\bar j}(s)$ is a Type I solution if and only if $g_{i\bar j}(t)$ is a nonsingular
solution, i.e., $K_{\max}(t)\le C$ for some constant $C>0$ for all $t\in [0, \infty)$;

\smallskip
\item[(b)] $\hat g_{i\bar j}(s)$ is a Type II solution if and only if $g_{i\bar j}(t)$ is a singular solution. 
\end{itemize}
\end{lemma}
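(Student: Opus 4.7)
The plan is to derive Lemma 7.1 as a direct consequence of the scaling identity
$$(1-s)\,\hat{K}_{\max}(s) \;=\; K_{\max}(t(s)), \qquad t(s) = -\log(1-s),$$
which is exactly the content of Corollary 2.1 applied to the curvature tensor. Under this change of variables, $s\to 1^-$ corresponds to $t\to\infty$, so conditions on the behavior of $\hat K_{\max}$ at the singular time translate into conditions on the long-time behavior of $K_{\max}$.

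For part (a), I would first prove the forward implication. Suppose $\hat g(s)$ is Type~I, so there exist $M>0$ and $s_0\in(0,1)$ with $(1-s)\hat K_{\max}(s)\le M$ for $s\in[s_0,1)$. On the compact subinterval $[0,s_0]$, the smoothness of the KRF solution gives a uniform bound $\hat K_{\max}(s)\le M'$, hence $(1-s)\hat K_{\max}(s)\le M'$ there as well. Combining these and applying the scaling identity yields $K_{\max}(t)\le \max(M,M')$ for all $t\in[0,\infty)$, i.e., $g(t)$ is nonsingular. Conversely, if $K_{\max}(t)\le C$ on $[0,\infty)$, then $(1-s)\hat K_{\max}(s)\le C$ on $[0,1)$, so in particular $\limsup_{s\to 1^-}(1-s)\hat K_{\max}(s)\le C<\infty$, giving Type~I.

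For part (b), I would simply observe that the Type~I/Type~II dichotomy is by definition exhaustive and mutually exclusive for a maximal solution of the KRF on $[0,1)$, and similarly the nonsingular/singular dichotomy is exhaustive and mutually exclusive for the corresponding NKRF solution on $[0,\infty)$ (since a nonsingular NKRF solution is exactly one with $K_{\max}(t)$ uniformly bounded). Therefore (b) is the contrapositive of (a), and no further argument is needed.

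There is essentially no hard step here: the entire content of the lemma is the scaling identity from Corollary 2.1, together with the observation that smoothness on compact subintervals upgrades a $\limsup$ bound near the endpoint to a uniform bound on the whole interval. The only minor point to be careful about is ensuring that ``nonsingular'' is phrased as a uniform-in-$t$ bound rather than a bound in some $\limsup$ sense; this is why the compactness/smoothness argument on $[0,s_0]$ is included in the forward direction of (a).
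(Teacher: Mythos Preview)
Your proposal is correct and follows exactly the paper's approach: the paper simply notes the scaling identity $(1-s)\hat K_{\max}(s)=K_{\max}(t(s))$ from Corollary~2.1 and declares the lemma an immediate consequence, without writing out any further details. Your write-up is in fact more careful than the paper's, since you explicitly handle the compact-subinterval argument needed to pass from a $\limsup$ bound to a uniform bound, and you spell out that (b) is the contrapositive of (a).
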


For each type of (maximal) solutions $\hat g_{i\bar j}(s)$ to KRF (2.1) or the corresponding solutions $g_{i\bar j}(t)$ for NKRF (2.5),
following Hamilton \cite{Ha95F} (see also Chapter 4 of \cite{CaoZhu06}) we define a corresponding type of limiting singularity models.

\begin{definition} A solution $g^{\infty}_{i\bar j}(t)$ to KRF on a complex
manifold $X^n_{\infty}$ with complex structure $J_{\infty}$, where either $X^n_{\infty}$ is compact or at each time $t$ the
K\"ahler metric $g^{\infty}_{i\bar j}(t)$ is complete and has bounded curvature, is
called a Type I or Type II {\bf singularity model} if it is not flat and of one of
the following two types:

\medskip
\noindent \textbf{Type I}: $g^{\infty}_{i\bar j}(t)$ exists for
$t\in(-\infty,\Omega)$ for some $\Omega$ with
$0<\Omega<+\infty$ and
$$|Rm^{\infty}|(x, t) \leq\Omega/(\Omega-t)
$$
\hskip 1.6cm everywhere on $X^n_{\infty}\times (-\infty,\Omega)$ with equality somewhere at $t=0$;

\vskip 0.1cm\noindent \textbf{Type II}: $g^{\infty}_{i\bar j}(t)$ exists for
$t\in(-\infty,+\infty)$  and
$$|Rm^{\infty}|(x, t)\leq 1 \ \ \ \ \ \ \ \ \
$$
\hskip 1.6cm everywhere on $X^n_{\infty}\times (-\infty,\Omega)$  with equality somewhere at $t=0$.
\end{definition}

With the help of the strong $\kappa$-noncollapsing theorem, we  can apply Hamilton's Type I and Type II blow up arguments to get the following result, 
a K\"ahler analog of  Theorem 16.2 in \cite{Ha95F}: 

\begin{theorem}  For any (maximal) solution $\hat g_{i\bar j} (s)$, $0\le s<1$, to KRF (2.1) on compact Fano manifold $X^n$ (or  the corresponding solution 
$g_{i\bar j} (t)$ to NKRF (2.5) on $X^n\times [0, \infty)$), which is of either Type I or Type II, there exists a sequence of dilations of the solution which
converges in $C_{loc}^{\infty}$ topology to a singularity model $(X^n_{\infty}, J_{\infty}, g^{\infty}(t))$ of the corresponding Type. Moreover, the Type I singularity model
$(X^n_{\infty}, J_{\infty}, g^{\infty}(t))$ is  compact  with $X^n_{\infty}=X^n$ as a smooth manifold, while the Type II singularity model $(X^n_{\infty}, J_{\infty}, g^{\infty}(t))$ is complete 
noncompact.  
\end{theorem}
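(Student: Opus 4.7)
I will follow Hamilton's blow-up scheme from \cite{Ha95F}, with the strong $\kappa$-noncollapsing estimate of Theorem 4.2 supplying the injectivity radius lower bounds that let us invoke Hamilton's compactness theorem for Ricci flows. I work on the KRF side; the corresponding NKRF statement follows via the scaling correspondence in Lemma 2.3 and Corollary 2.1.

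\emph{Step 1: point and scale selection.} In the Type I case $(1-s)\hat K_{\max}(s)\le C$. Since $\hat g(s)$ is maximal, $\hat K_{\max}(s)\to\infty$ as $s\to 1$, and one chooses $(x_k,s_k)$ with $s_k\to 1$ and $\hat K_k:=|\hat{Rm}|(x_k,s_k)\to\infty$ realizing (after a standard normalization) the supremum of $(1-s)|\hat{Rm}|(x,s)$ on $X\times[0,s_k]$. Parabolically rescale
\[
\tilde g_k(t)=\hat K_k\,\hat g\!\left(s_k+\hat K_k^{-1}t\right),\qquad t\in[-s_k\hat K_k,(1-s_k)\hat K_k),
\]
and set $\Omega_k=(1-s_k)\hat K_k\in(0,C]$; the Type I hypothesis yields $|\widetilde{Rm}_k|(x,t)\le \Omega_k/(\Omega_k-t)$. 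In the Type II case $(1-s)\hat K_{\max}(s)\to\infty$, and Hamilton's weighted point-picking from \cite{Ha95F} furnishes $(x_k,s_k)$ and auxiliary radii $A_k\to\infty$ so that, after the same rescaling, $|\widetilde{Rm}_k|\le 2$ on $X^n\times[-A_k,A_k]$, with equality (up to a factor $1/2$) at $(x_k,0)$.

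\emph{Step 2: noncollapsing and pointed $C^\infty_{\mathrm{loc}}$ compactness.} In both cases the rescaled flows have uniformly bounded curvature on parabolic neighborhoods of the basepoint of fixed size. Theorem 4.2 applied to $\hat g$ at $(x_k,s_k)$, together with Lemma 2.3 and the fact that the admissible scale of the strong noncollapsing estimate covers the rescaled unit scale, gives a uniform lower bound $\mathrm{vol}_{\tilde g_k(0)}(B(x_k,1))\ge\kappa$. Combined with the curvature bound this yields, via Cheeger's lemma, a uniform positive injectivity radius at $x_k$, and Hamilton's compactness theorem for pointed Ricci flows then produces a subsequential $C^\infty_{\mathrm{loc}}$ limit $(X_\infty^n,g^\infty(t),x_\infty)$, smooth and complete on its maximal time interval.

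\emph{Step 3: K\"ahler structure on the limit.} Each $\tilde g_k$ is K\"ahler with respect to the fixed complex structure $J$ of $X^n$, so $\nabla^{\tilde g_k}J=0$. Under the pointed smooth convergence, the pulled-back almost complex structures $J_k$ converge (along a further subsequence) to an almost complex structure $J_\infty$ parallel with respect to $g^\infty(t)$; a parallel almost complex structure has vanishing Nijenhuis tensor and is therefore integrable, so $(X_\infty^n,J_\infty,g^\infty(t))$ is a K\"ahler solution to KRF.

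\emph{Step 4: type and topology of the limit.} In Type I, $\Omega_k\to\Omega\in(0,C]$ and the curvature estimate passes to the limit as $|Rm^\infty|(x,t)\le\Omega/(\Omega-t)$ on $(-\infty,\Omega)$, with equality somewhere at $t=0$ by the normalization of Step~1. Perelman's uniform diameter bound (Theorem 6.1(ii)) applied to the associated NKRF, combined with the fact that $\hat K_k(1-s_k)$ stays bounded, forces $\mathrm{diam}(X^n,\tilde g_k(0))$ to stay uniformly bounded, so the limit is compact and diffeomorphic to $X^n$. In Type II, the basepointed limit is defined on all of $\mathbb{R}$ with $|Rm^\infty|\le 1$ and equality at $(x_\infty,0)$; here $\hat K_k(1-s_k)\to\infty$, so the uniform NKRF diameter bound yields $\mathrm{diam}(X^n,\tilde g_k(0))\to\infty$, while the injectivity radius stays bounded below, and the limit is therefore complete and noncompact.

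\emph{Main obstacle.} The most delicate step is the Type II point-picking, which must simultaneously arrange $\hat K_k\to\infty$, a two-sided time interval $[-A_k,A_k]$ with $A_k\to\infty$, and a uniform curvature bound on that interval. Hamilton's weighted maximization in \cite{Ha95F} achieves exactly this, using $(1-s)\hat K_{\max}(s)\to\infty$ as the essential input. A subsidiary delicate point is that Hamilton's compactness theorem requires the injectivity radius bound not only at the basepoint but on balls of fixed size about it at every time slice near $0$; this is precisely why the strong form of Theorem 4.2 (noncollapsing at every scale $r$ on which $R\le r^{-2}$) is needed, rather than just noncollapsing at a single distinguished scale.
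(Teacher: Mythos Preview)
Your approach is essentially the same as the paper's: Hamilton's Type I/Type II point-picking and parabolic rescaling, with the $\kappa$-noncollapsing of Theorem~4.2 supplying the injectivity radius bound needed for Hamilton's compactness theorem. The paper's own proof is in fact terser than yours---it does not spell out why the Type I limit is compact and the Type II limit noncompact, nor does it argue the persistence of the K\"ahler structure as you do in Step~3.

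Your Step~4 argument for Type~I compactness via Perelman's diameter bound is correct and is a genuine addition over the paper: since
\[
\mathrm{diam}(X,\tilde g_k(0))=\sqrt{\hat K_k}\cdot\mathrm{diam}(X,\hat g(s_k))=\sqrt{\hat K_k(1-s_k)}\cdot\mathrm{diam}(X,g(t_k))\le \sqrt{\Omega}\,C_5,
\]
the limit is compact and diffeomorphic to $X^n$. However, your Type~II noncompactness argument has a gap. Theorem~6.1(ii) is an \emph{upper} bound on the NKRF diameter, and from $\mathrm{diam}(\tilde g_k(0))=\sqrt{\hat K_k(1-s_k)}\cdot\mathrm{diam}(g(t_k))$ with $\hat K_k(1-s_k)\to\infty$ you cannot conclude $\mathrm{diam}(\tilde g_k(0))\to\infty$ without a uniform \emph{lower} bound on $\mathrm{diam}(g(t_k))$, which Theorem~6.1 does not provide. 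The clean fix is to argue via volume instead: with the paper's Type~II picking one has $|\widetilde{Rm}_k(\cdot,0)|\le 1$ globally on $X$ at $\hat t=0$, while
\[
\mathrm{Vol}(X,\tilde g_k(0))=\big(Q_k(1-s_k)\big)^n V_0\ge \big(Q_k(S_k-s_k)\big)^n V_0\to\infty.
\]
A compact limit would force the rescaled volumes to converge to a finite number, a contradiction; equivalently, bounded curvature at $\hat t=0$ together with unbounded volume forces the diameter to blow up via Bishop--Gromov, and the limit is complete noncompact.
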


\begin{pf}  {\bf Type I case:}  \  Let
$$
\Omega=: \limsup_{t\rightarrow 1}
(1-s)\hat K_{\max}(s)<+\infty.
$$
First we note that $\Omega>0$. Indeed by the evolution equation of
curvature,
$$
\frac{d}{ds}{\hat K_{\max}(s)}\leq{\rm Const}\, \cdot \hat K_{\max}^2(s).
$$
This implies that
$$
\hat K_{\max}(s)\cdot (1-s)\geq {\rm Const}\, >0,
$$
because
$$
\limsup_{t\rightarrow 1}\hat K_{\max}(s)=+\infty.
$$
Thus $\Omega$ must be positive.

Next we choose a sequence of points $x_k$ and times $s_k$ such that
$s_k\rightarrow 1$ and
$$
\lim_{k\rightarrow \infty}(1-s_k)|\hat Rm|(x_k,s_k)=\Omega.
$$
Denote by
$$
Q_k=|\hat Rm|(x_k,s_k).
$$
Now translate the time so that $s_k$ becomes 0 in the new time, and dilate in space-time by
the factor $Q_k$ (time like distance squared) to get the rescaled solution
$$
\hat {g}_{i\bar j}^{(k)}(\hat {t})
=Q_{k}  \ \!\hat g_{i\bar j}(s_k+Q_k^{-1}\hat {t})
$$
to the KRF
$$
\frac{\partial}{\partial
\hat {t}}\hat {g}_{i\bar j}^{k} = -2\hat {R}_{i\bar j}^{(k)},
$$
where $\hat {R}_{i\bar j}^{(k)}$ is the Ricci tensor of
$\hat {g}_{i\bar j}^{(k)}$, on the time interval $[-Q_{k} s_k, Q_{k}(1-s_k))$, with
$$
Q_{k} s_k=s_k |\hat Rm|(x_k,s_k)\to \infty \quad \mbox{and} \quad  Q_{k} (1-s_k)=(1-s_k)|\hat Rm|(x_k,s_k)\to \Omega.
$$

For any $\epsilon>0$ we can find a time $\tau <1 $ such that for
$s\in [\tau,1)$,
$$
|\hat Rm|\leq (\Omega+\epsilon)/(1-s)
$$
by the assumption. Then for $\hat {t}\in
[Q_{k} (\tau-s_k), Q_k (1-s_k))$, the curvature of
$\hat {g}_{i\bar j}^{(k)}(\hat {t})$ is bounded by
\begin{align*}
|\hat {Rm}^{(k)}| & = Q_{k}^{-1} |\hat Rm (\hat g)| \\
& \leq \frac {\Omega+\epsilon} {Q_k (1-s)} =\frac {\Omega+\epsilon} {Q_k (1-s_k)+ Q_k (s_k-s)}\\
&  \rightarrow (\Omega+\epsilon)/(\Omega-\hat {t}), \qquad
\text{as } \; k\rightarrow +\infty.
\end{align*}
With the above curvature bound and the injectivity radius estimates coming from $\kappa$-noncollapsing, one can apply 
Hamilton's compactness theorem (cf \cite{Ha95F} or Theorem 4.1.5 in \cite{CaoZhu06}) to get a subsequence
of $\hat {g}_{i\bar j}^{(k)}(\hat {t})$ which converges
in the $C_{loc}^{\infty}$ topology to a limit metric
$g_{i\bar j}^{(\infty)}(t)$ in the Cheeger sense on 
$(X^n, J_{\infty})$  for some complex structure $J_{\infty}$ such that
$g_{i\bar j}^{(\infty)}(t)$ is a solution to
the KRF with $t \in(-\infty,\Omega)$ and its curvature satisfies the bound
$$
|Rm^{(\infty)}|\leq \Omega/(\Omega-t)
$$
everywhere on $X^n_{\infty} \times (-\infty,\Omega)$ with the equality
somewhere at $t=0$.

\medskip
{\bf Type II:}\index{type II!(a)} \  Take  a sequence  $S_k \rightarrow 1$ and pick space-time points $(x_k, s_k)$
such that, as $k\rightarrow +\infty$,
$$
Q_k (S_k-s_k)=  \max_{x\in X, s\leq S_k}(S_k-s)|\hat Rm|(x,s)
\rightarrow +\infty,
$$
where again we denote by $Q_k=|\hat Rm|(x_k,s_k)$. Now translate the time and dilate the solution as before to get
$$
\hat {g}_{i\bar j}^{(k)}(\hat {t})
=Q_k \hat g_{i\bar j}(s_k+ Q_k^{-1}\hat {t}),
$$
which is a solution to the KRF and satisfies the
curvature bound
\begin{align*}
|\hat{Rm}^{(k)}| &  =Q_k^{-1} |\hat Rm(\hat g)| \leq  \frac{(S_k-s_k)}{(S_k-s)}\\
& =\frac{Q_k (S_k-s_k)}{Q_k (S_k-s_k)-\hat{t}}\quad \mbox{ for }
\; \hat{t}\in\ [-Q_k s_k, Q_k (S_k-s_k)).
\end{align*}
Then as before, by
applying Hamilton's compactness theorem, there exists a
subsequence of $\hat{g}_{i\bar j}^{(k)}(\hat {t})$ which
converges in the $C_{loc}^{\infty}$ topology to a limit metric
$g_{i\bar j}^{(\infty)}(t)$ in the Cheeger sense on a limiting complex manifold
$(X^n_{\infty}, J_{\infty})$  such that
$g_{i\bar j}^{(\infty)}(t)$ is a complete solution to
the KRF  with $t \in(-\infty,+\infty)$,  and its curvature satisfies
$$
|Rm^{(\infty)}|\leq 1
$$
everywhere on $X^n_{\infty} \times (-\infty,+\infty)$ and the equality
holds somewhere at $t=0$.
\end{pf}

\medskip
\begin{remark} The injectivity radius bound needed in Hamilton's compactness theorem is satisfied due to the ``Little Loop Lemma" (cf. Theorem 4.2.4 in\cite{CaoZhu06}),
which is a consequence of Perelman's  $\kappa$-noncollapsing theorem.
\end{remark}

\medskip

Thanks to Perelman's monotonicity of $\mu$-entropy and the uniform scalar curvature bound in Theorem 6.1, we can say more about the singularity models
in Theorem 7.1.

First of all,  the following result on Type I singularity
models of KRF (2.1) is well-known (cf. \cite{Se}).

\medskip
\begin{theorem} \ Let  $\tilde g_{i\bar j}(s)$ be a Type I solution to KRF (2.1) on $X^n\times [0,1)$ and
$g_{i\bar j}(t)$  be the corresponding nonsingular solution to NKRF (2.5) on $X^n\times [0,\infty)$. Then there exists a sequence $\{t_k\}\to \infty$ such that $g^{(k)}_{i\bar j}(t) =:\! g_{i\bar j}(t+t_k)$
converges in the Cheeger sense to a gradient shrinking K\"ahler-Ricci soliton $g^{\infty} (t)$ on $(X^n, J_{\infty})$, where $J_{\infty}$ is a  certain
complex structure on $X^n$, possibly different from $J$.
\end{theorem}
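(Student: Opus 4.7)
The plan is to combine the Type I hypothesis with Perelman's uniform estimates (Theorem 6.1), the strong $\kappa$-noncollapsing of Theorem 4.2, and the monotonicity of the $\mu$-entropy (Lemma 4.3, Proposition 4.1). By Lemma 7.1(a), the Type I assumption on $\hat g(s)$ is equivalent to a uniform bound $|Rm|(\cdot,t)\le C$ for the NKRF solution $g(t)$ on $X^n\times [0,\infty)$. Together with the uniform diameter bound of Theorem 6.1(ii) and the $\kappa$-noncollapsing on unit scale available for $t$ large (Theorem 4.2), Hamilton's compactness theorem applies to the time-translated sequence $g^{(k)}(t):=g(t+t_k)$ for any $t_k\to\infty$. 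After passing to a subsequence, one obtains a smooth $C^\infty_{\mathrm{loc}}$ Cheeger--Gromov limit $(X^n_\infty, J_\infty, g^\infty(t))$ of complete NKRF solutions defined for all $t\in(-\infty,\infty)$. Since the diameters are uniformly bounded and the total volume is preserved, $X^n_\infty$ is diffeomorphic to $X^n$ (as a smooth manifold), while the pulled-back complex structures $\phi_k^*J$ converge only to a possibly new $g^\infty$-parallel almost complex structure $J_\infty$.

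To identify the limit as a gradient shrinking K\"ahler--Ricci soliton, I would exploit the monotonicity of the $\mu$-entropy. By Lemma 4.3, $\mu(g(t),1)$ is monotone nondecreasing; Theorem 6.1(i),(iii) together with the uniform diameter and $\kappa$-noncollapsing bounds supply a uniform upper bound (by testing $\mathcal W$ against a suitable truncation of the Ricci potential itself), so $\mu(g(t),1)\uparrow\mu_\infty<\infty$. Combining the scaling identity (4.16) with the first-variation formula of Lemma 4.2 applied to the associated KRF $\hat g(s)$ with $\tau=1-s$, and then decomposing the Riemannian Hessian of $f$ into $(1,1)$- and $(2,0)+(0,2)$-parts that is natural in the K\"ahler setting, yields a differentiated form
$$\frac{d}{dt}\mu(g(t),1)\;=\;\frac{c_1}{(2\pi)^n}\!\int_{X^n}\!\!\bigl|R_{i\bar j}+\nabla_i\nabla_{\bar j}f-g_{i\bar j}\bigr|^2 e^{-f}dV\;+\;\frac{c_2}{(2\pi)^n}\!\int_{X^n}\!\!|\nabla_i\nabla_j f|^2 e^{-f}dV$$
with positive constants $c_1,c_2$. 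Both integrands are nonnegative, so the integrability over $[0,\infty)$ forces the existence of a subsequence $t_k\to\infty$ along which both integrals tend to $0$.

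Finally, I would pass to the limit. Using the uniform $C^1$-estimate for $f(t)$ together with bootstrapping from the constraint $n-R=\Delta f$ (equation (6.4)), the evolution equation (6.6), and the uniform scalar curvature bound, one upgrades the convergence of $f^{(k)}:=f(\cdot,t+t_k)$ to smooth convergence to a limit $f^\infty$ on $X^n_\infty$. Passing $k\to\infty$ in the two vanishing integrals above gives the pointwise identities
$$R^\infty_{i\bar j}+\nabla_i\nabla_{\bar j}f^\infty-g^\infty_{i\bar j}=0,\qquad \nabla_i\nabla_j f^\infty=0$$
on $(X^n_\infty, J_\infty)$, which is precisely the gradient shrinking K\"ahler--Ricci soliton equation (1.26). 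The main obstacle I anticipate is twofold: first, verifying the precise K\"ahler form of the differentiated $\mu$-entropy so that the $(2,0)$-Hessian term $|\nabla_i\nabla_j f|^2$ appears separately, which is what guarantees $\nabla f^\infty$ is $J_\infty$-holomorphic rather than just a real gradient; and second, carefully matching the smooth convergence of $f^{(k)}$ with the converging pulled-back complex structures $\phi_k^*J$, so that the soliton identity is intrinsic to $(X^n_\infty, J_\infty, g^\infty)$.
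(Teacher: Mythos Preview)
Your overall strategy is sound and is essentially the content of the references the paper cites. The paper's own proof is far more terse: it simply invokes Theorem 7.1 to produce a compact Type I singularity model on $(X^n, J_\infty)$, and then quotes as a known fact (referring to \cite{Se}, \cite{Se2}, p.~662 of \cite{PSSW08}, and Corollary 1.2 of \cite{CCZ03}) that every compact Type I singularity model is necessarily a shrinking gradient Ricci soliton. What you have written is a self-contained expansion of that cited fact rather than a genuinely different route; the payoff of your version is that it is independent of those external references, at the cost of having to control the entropy minimizers carefully.

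There is, however, one technical conflation to repair. You use the symbol $f$ in two incompatible roles. In Section 6 (equations (6.2)--(6.6), Theorem 6.1(iii)) $f$ denotes the \emph{Ricci potential}, and the uniform $C^1$-bound you invoke applies to that function. In Lemma 4.2 and in your differentiated $\mu$-entropy formula, the relevant $f$ is the solution of the backward heat equation or, via Proposition 4.1, a \emph{minimizer} of $\mathcal W(g(t),\cdot,1)$ satisfying the Euler--Lagrange equation (4.7). These are not the same function, so the sentence ``Using the uniform $C^1$-estimate for $f(t)$ together with bootstrapping from the constraint $n-R=\Delta f$'' does not apply to the object appearing in your integral identities. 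To fix this you must establish uniform estimates on the minimizers $f_t$ directly from (4.7) and the uniform geometry, and it is these minimizers that converge to the soliton potential $f^\infty$. A cleaner alternative, closer to \cite{Se}, avoids the derivative formula altogether: since $\mu(g(t),1)$ is monotone and bounded it converges, hence $\mu(g^\infty(t),1)$ is constant in $t$ on the limit, and the strict-monotonicity clause of Proposition 4.1 then forces $(X^n, J_\infty, g^\infty)$ to be a gradient shrinking soliton.
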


\begin{pf} This is a consequence of Theorem 7.1, and the fact  that every compact Type I singularity model
is necessarily a shrinking gradient Ricci soliton (see \cite{Se}, \cite {Se2} or p.662 of \cite{PSSW08}; also Corollary 1.2 in \cite{CCZ03}).
\end{pf}

Next, for Type II solutions to the KRF, we have the following two
results. These results were known to R. Hamilton and the author \cite{CH04}
back in 2004\footnote{Theorem 7.3 and Theorem 7.4 were observed
by Hamilton and the author during the IPAM conference ``Workshop
on Geometric Flows: Theory and Computation" in February, 2004.},
and also observed independently by Ruan-Zhang-Zhang
\cite{RZZ} (see also \cite{ChWa2012}).
\medskip
\begin{theorem} 
\ Let 
$g_{i\bar j}(t)$  be a  singular solution to NKRF (2.5) on $X^n\times [0,\infty)$. Then there exists a sequence $\{t_k\}\to \infty$ and rescaled solution metrics $g^{(k)} (t)$
to KRF such that $(X^n, J, g^{(k)} (t))$ converges in the Cheeger sense to some noncompact limit $(X_{\infty}^n, J_{\infty}, g_{\infty} (t))$,
$-\infty<t<\infty$,  with the following properties:

\begin{itemize}

\item[(i)] $g_{\infty} (t)$  is Calabi-Yau (i.e, Ricci flat K\"ahler);

\item[(ii)] $|Rm|_{g_{\infty} (t)} (x, t) \le 1$ everywhere and with equality somewhere at $t=0$;

\item[(iii)] $(X_{\infty}^n, g_{\infty} (t))$ has maximal volume growth: for any $x_0\in X_{\infty}^n$ there exists a positive constant $c>0$ such that
$$ \Vol (B(x_0, r))\ge c r^{2n}, \qquad {\mbox{for all}} \  r>0.$$
\end{itemize}
\end{theorem}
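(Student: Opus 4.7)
Since $g_{i\bar j}(t)$ is a singular NKRF solution, Lemma 7.1 says the associated KRF solution $\hat g_{i\bar j}(s)$ on $X^n\times[0,1)$ is Type II, and the plan is to run Hamilton's Type II blow-up procedure from the proof of Theorem 7.1 and then identify the limit using Perelman's two uniform estimates. Choose $S_k\to 1$ and $(x_k,s_k)$ with $Q_k=|\hat{Rm}|(x_k,s_k)$ and $Q_k(S_k-s_k)\to\infty$, form $\hat g^{(k)}_{i\bar j}(\hat t)=Q_k\,\hat g_{i\bar j}(s_k+Q_k^{-1}\hat t)$, and extract a Cheeger subsequential limit $(X^n_\infty,J_\infty,g_\infty(t))$ using the strong $\kappa$-noncollapsing (Theorem 4.2) and Hamilton's compactness theorem. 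By construction the limit is a complete eternal KRF satisfying $|Rm_\infty|\le 1$ with equality at some point at $t=0$, which gives (ii) for free.

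For (i), I would feed in Perelman's uniform scalar curvature bound (Theorem 6.1). By Corollary 2.1, $\hat R(s)=R(t(s))/(1-s)$, hence $|\hat R(s)|\le C/(1-s)$, and therefore
$$
|\hat R^{(k)}(\hat t)|=Q_k^{-1}|\hat R(s_k+Q_k^{-1}\hat t)|\le\frac{C}{Q_k(1-s_k)-\hat t}.
$$
Since $Q_k(1-s_k)\ge Q_k(S_k-s_k)\to\infty$, the right-hand side tends to $0$ locally uniformly in $\hat t$, so $R_\infty\equiv 0$ on the limit. Plugging this into the KRF evolution equation $\partial_t R=\Delta R+|Rc|^2$ (Lemma 2.5 with $\lambda=0$) forces $|Rc_\infty|^2\equiv 0$ pointwise, so $g_\infty$ is Ricci-flat K\"ahler, i.e.\ Calabi-Yau (and hence $t$-independent).

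For (iii) the key observation is that the noncollapsing scale grows under the Type II rescaling. Each $\hat g^{(k)}$ is a KRF of total time-duration $Q_k$ after time translation, and the constant $\kappa$ in Theorem 4.2 depends only on the $\mu$-entropy of the initial metric $\tilde g$, which is scale-invariant by (4.3). Hence every $\hat g^{(k)}$ is $\kappa$-noncollapsed at scales up to $\sqrt{Q_k}\to\infty$ with the same $\kappa$, and the limit $g_\infty$ is $\kappa$-noncollapsed at every scale in the precise sense: for any $x_0\in X_\infty$ and $r>0$, if $R_\infty\le r^{-2}$ on $B_{g_\infty}(x_0,r)$, then $\Vol_{g_\infty}(B(x_0,r))\ge\kappa r^{2n}$. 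Since $R_\infty\equiv 0$ the hypothesis is automatic for every $r>0$, which yields the maximal volume growth (iii); the resulting infinite volume forces $X_\infty$ to be noncompact as claimed.

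The main obstacle I foresee is making the last paragraph genuinely rigorous. Two features of Perelman's approach have to be checked carefully: first, that the $\kappa$ in Theorem 4.2 is truly scale-invariant under the Type II rescaling, which uses the scaling invariance (4.3) of $\mu$ together with the fact that in the proof of Theorem 4.1 the constant is pinned down by the scale-invariant quantity $\mu_0=\inf_{\tau}\mu(\tilde g,\tau)$; second, that Theorem 4.2 demands only a scalar curvature bound (not a full Riemann curvature bound) on the ball, so that $R_\infty\equiv 0$ is enough to trigger noncollapsing at arbitrarily large scales where $|Rm_\infty|$ is merely bounded by $1$. Without both of these ingredients one could only produce a Ricci-flat limit of potentially collapsed type rather than a genuine Calabi-Yau of maximal volume growth.
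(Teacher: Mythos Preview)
Your proposal is correct and follows essentially the same route as the paper: invoke the Type II blow-up of Theorem 7.1 to obtain the eternal limit with property (ii), use Perelman's uniform scalar curvature bound (Theorem 6.1) together with the scalar curvature evolution equation to force $R_\infty\equiv 0$ and hence $Rc_\infty\equiv 0$ for (i), and deduce (iii) from the dilation-invariance of the $\kappa$-noncollapsing constant in Theorem 4.2 applied at arbitrarily large scales since $R_\infty\equiv 0$. Your write-up is in fact more detailed than the paper's (e.g., the explicit bound $|\hat R^{(k)}(\hat t)|\le C/(Q_k(1-s_k)-\hat t)$), and the two technical caveats you flag at the end are exactly the points the paper's proof leaves implicit.
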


\begin{pf} This is an immediate consequence of Theorem 7.1 and Theorem 6.1 (i). Indeed,
Theorem 7.1 implies the existence of a noncompact Type II singularity model
$(X_{\infty}^n, J_{\infty}, g_{\infty} (t))$ satisfying property
(ii).  Property (iii) follows from the
fact that the $\kappa$-noncollapsing property for KRF or NKRF in
Theorem 4.2 is dilation invariant, hence (4.17) and (4.18) holds
for each rescaled solution on larger and larger scales for the
same $\kappa>0$, hence the maximal volume growth in the limit of
dilations.  Finally, for property (i), note that the scalar
curvature $R$ of $g_{i\bar j}(t)$ is uniformly bounded on $X\times [0,\infty)$ by Theorem 6.1 and the rescaling factors
go to infinite, so we have $R^{\infty}=0$ everywhere in the limit of dilations. On the other hand,  since $g_{i\bar j}^{\infty} (t)$ is a solution to 
KRF, $R^{\infty}$ satisfies the evolution equation
$$ \frac{\partial} {\partial t} R^{\infty}=\Delta R^{\infty} +|Rc^{\infty}|^2.$$
Thus, we have $|Rc^{\infty}|^2=0$ everywhere hence $g_{\infty}$ is Ricci-flat.
\end{pf}

\medskip
\begin{theorem} 
\ Let $X^2$ be a Del Pezzo surface (i.e., a Fano surface) and let 
$g_{i\bar j}(t)$  be a singular solution to NKRF (2.5) on $X^2\times [0,\infty)$. Then the Type II limit space $(X_{\infty}^2, J_{\infty}, g_{\infty}) $
in Theorem 7.3 is a non-compact Calabi-Yau space satisfies the following properties:

\smallskip
\begin{itemize}

\smallskip
\item[(a)] $|Rm|_{g_{\infty}} \le 1$ everywhere on $X^2_{\infty}$ and with equality somewhere;

   \smallskip
\item[(b)] $(X_{\infty}^2, g_{\infty})$ has maximal volume growth: for any $x_0\in X_{\infty}^2$
there exists a positive constant $c>0$ such that $$ \Vol (B(x_0, r))\ge c r^{4}, \qquad {\mbox{for all}} \  r>0;$$

\smallskip
\item[(c)] $\int_{X^2_{\infty}} |Rm(g_{\infty})|^2 dV_{\infty} <\infty$.

\end{itemize}
\end{theorem}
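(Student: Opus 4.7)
The approach splits in two: parts (a) and (b) follow directly from Theorem 7.3 applied to the time-zero slice of $g_\infty(t)$, so the only substantive work is establishing (c), the finiteness of $\int_{X^2_\infty} |Rm|^2 dV$.

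The proof of (c) rests on the dimension-4 miracle that the functional $g \mapsto \int |Rm(g)|^2\,dV_g$ is scale invariant. Indeed, under $g \mapsto \lambda g$ the curvature $|Rm|^2$ picks up a factor $\lambda^{-2}$ while $dV_g$ picks up $\lambda^{n}$ with $n = \dim_{\mathbb{R}} = 4$, and the two factors cancel exactly. Consequently, if we establish the uniform bound
\[
\int_{X^2} |Rm(g(t))|^2\,dV_{g(t)} \le C \quad \text{for all } t \ge 0
\]
on the NKRF solution, the rescaled metrics $\hat g^{(k)}$ from the Type II blow-up of Theorem 7.1 inherit the same bound (translating between KRF and NKRF via Lemma 2.3 and Corollary 2.1). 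Since the Cheeger convergence $\hat g^{(k)}(\hat t) \to g_\infty(\hat t)$ is genuinely smooth on compact sets (by Hamilton's compactness theorem together with Shi's local derivative estimates applied to the bounded-curvature rescaled KRF), Fatou's lemma applied to an exhaustion of $X^2_\infty$ by compact subsets yields
\[
\int_{X^2_\infty} |Rm(g_\infty)|^2\,dV_{g_\infty} \le \liminf_{k\to\infty} \int_{X^2} |Rm(\hat g^{(k)})|^2\,dV_{\hat g^{(k)}} \le C,
\]
which is property (c).

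It remains to bound $\int_{X^2} |Rm(g(t))|^2\,dV_{g(t)}$ uniformly along the NKRF. Three inputs combine. First, Theorem 6.1 gives $|R| \le C$, and since $\Vol(X, g(t)) = \Vol(X, \tilde g)$ is preserved under NKRF (Lemma 2.2 with $R$ replaced by $R-n$ which integrates to zero), we get $\int R^2\,dV \le C$. Second, on a K\"ahler surface a direct computation in normal coordinates diagonalizing $R_{i\bar j}$ with eigenvalues $\lambda_1,\lambda_2$ gives $\mathrm{Ric}\wedge\mathrm{Ric} = 2\lambda_1\lambda_2\,\omega^{[2]} = (R^2 - |\mathrm{Ric}|^2)\,\omega^{[2]}$, while $\int_{X^2}\mathrm{Ric}\wedge\mathrm{Ric}$ is a fixed multiple of the topological number $c_1(X)^2$; combined with the first step this yields a uniform bound on $\int |\mathrm{Ric}|^2\,dV$. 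Third, the Chern-Gauss-Bonnet formula in real dimension $4$ expresses $\chi(X)$ as the integral of a quadratic curvature polynomial in which $|Rm|^2$ appears with positive coefficient and the remaining terms involve only $|\mathrm{Ric}|^2$ and $R^2$; solving for $\int |Rm|^2\,dV$ then produces the required uniform bound.

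The main obstacle is arithmetic rather than conceptual: one must write the Gauss-Bonnet decomposition carefully and verify that the coefficient of $|Rm|^2$ has the right sign to be solved for. Given Theorem 6.1, the dimension-4 scale invariance, the topological identities $\int c_1^2$ and $\chi(X)$ for K\"ahler surfaces, and the fact that Cheeger convergence in this bounded-curvature setting is smooth on compacta, no further nontrivial geometric input is required. One could alternatively phrase the scale-invariant bound using the Euler form on the rescaled solutions directly, but the route through the NKRF keeps the role of Theorem 6.1 transparent.
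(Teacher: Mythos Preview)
Your argument is correct and matches the paper's approach: both reduce (c) to the scale invariance of $\int |Rm|^2\,dV$ in real dimension four together with a uniform bound on this integral along the NKRF, the latter coming from Perelman's uniform scalar curvature estimate (Theorem 6.1) plus the Chern-number identities on a K\"ahler surface. The only difference is packaging: where you unpack the topological step into $\int \mathrm{Ric}\wedge\mathrm{Ric} \sim c_1^2$ followed by Chern--Gauss--Bonnet, the paper cites Calabi's Proposition 1.1 to assert directly that $\int_{X^2}|Rm|^2\,dV_t$ differs from $\int_{X^2} R^2\,dV_t$ by a constant depending only on the K\"ahler class and the Chern classes $c_1(X)$, $c_2(X)$.
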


\begin{pf} Clearly, we only need to verify property (c). But this follows from the facts the integral $$\int_{X^2} |Rm|^2 (x,t) dV_{t} $$ is dilation invariant
in complex dimension $n=2$ (real dimension 4); that it differs
from $\int_X R^{2} dV_{t}$ up to a constant depending only on the
K\"ahler class of $g(0)$ and the Chern classes $c_1(X)$ and
$c_2(X)$ (cf. Proposition 1.1 in \cite{Calabi});  and that, before
the dilations, $\int_X R^2 dV_{t}$ is uniformly bounded for all
$t\in [0, \infty)$ by the uniform scalar curvature bound in Theorem 6.1 (i).
\end{pf}

\smallskip
\begin{remark} The work of Bando-Kasue-Nakajima \cite{BKN} implies that the limiting Calabi-Yau surfaces in Theorem 7.4 are asymptotically locally Euclidean (ALE) of order 
at least 4.
\end{remark}

\smallskip
\begin{remark} Kronheimer \cite{Kr89} has classified ALE Hyper-K\"ahler surfaces (i.e., simply connected ALE  Calabi-Yau surfaces).
\end{remark}

\end{document}